\newtheorem{lemm}{Lemma}[section]
\newtheorem{thm}[lemm]{Theorem}
\newtheorem{cor}[lemm]{Corollary}
\newtheorem{defn}[lemm]{Definition}
\newtheorem{prop}[lemm]{Proposition}
\newtheorem{rem}[lemm]{Remark}
\newcommand{\ac}{\alpha_s^\vee}
\newcommand{\act}[2][s]{{}^{#1}\!#2}
\newcommand{\ad}{{\rm ad}\,}
\newcommand{\Ann}{{\rm Ann}\,}
\newcommand{\bx}{\mathbf x}
\newcommand{\bq}{\mathbf q}
\newcommand{\bp}{\mathbf p}
\newcommand{\C}{\mathbb C}
\newcommand{\ch}[1]{#1^\vee}
\newcommand{\cH}{\mathcal H}
\newcommand{\cI}{\mathcal I}
\newcommand{\cM}{\mathcal M}
\newcommand{\cN}{\mathcal N}
\newcommand{\cO}{\mathcal O}
\newcommand{\cco}{{c,\psi}}
\newcommand{\co}{{c,\omega}}
\newcommand{\cT}{\mathcal T}
\newcommand{\coh}{{\rm-mod}_{coh}}
\newcommand{\D}{\mathcal D}
\newcommand{\Der}{{\rm Der}}
\newcommand{\End}{{\rm End}}
\newcommand{\eu}{\mathbf{eu}}
\newcommand{\h}{\mathfrak h}
\newcommand{\hbr}{\bar\h_{\rm reg}}
\newcommand{\hO}{\hat{\mathcal O}}
\newcommand{\Hom}{{\rm Hom}\,}
\newcommand{\hr}{\h_{\rm reg}}
\newcommand{\hwp}[1][W']{\h^{#1}\!}
\newcommand{\im}{{\rm Im}\,}
\newcommand{\isom}{\stackrel{{}_\sim}{\rightarrow}}
\newcommand{\len}{{\rm len}}
\newcommand{\m}{\mathfrak m}
\newcommand{\Mat}{{\rm Mat}}
\newcommand{\ozt}{\overline{\Z[[t]]}}
\newcommand{\p}{\mathfrak p}
\newcommand{\R}{\mathbb R}
\newcommand{\re}{{\rm Re}}
\newcommand{\rs}{{\rm-mod}_{RS}}
\newcommand{\Sqp}{S_q\times S_p}
\newcommand{\Spec}{{\rm Spec}\,}
\newcommand{\Spf}{{\rm Spf}\,}
\newcommand{\stab}{{\rm Stab}}
\newcommand{\supp}{{\rm Supp}\,}
\newcommand{\tr}{{\rm tr}}
\newcommand{\wfd}[1][]{W#1{\rm-mod}_{fd}}
\newcommand{\Z}{\mathbb Z}
\begin{document}

\begin{titlepage}
\title[Representations of the rational {C}herednik algebra]{Supports of representations of the rational {C}herednik algebra of type {A}}
\author{Stewart Wilcox}
\address{Stewart Wilcox \\ Department of Mathematics \\ Harvard University \\ 1 Oxford Street \\ Cambridge MA 02318 \\ USA}
\email{swilcox@fas.harvard.edu}
\footnotetext{Date: May, 2010.}
\end{titlepage}

\begin{abstract}
We first consider the rational Cherednik algebra corresponding to the action of a finite group on a complex variety, as defined by Etingof. We define a category of representations of this algebra which is analogous 
to ``category $\cO$" for the rational Cherednik algebra of a vector space. We generalise to this setting Bezrukavnikov and Etingof's results about the possible support sets of such representations. Then we focus on the case 
of $S_n$ acting on $\C^n$, determining which irreducible modules in this category have which support sets. We also show that the category of representations with a given support, modulo those with smaller support, is equivalent to 
the category of finite dimensional representations of a certain Hecke algebra.
\end{abstract}
\maketitle

\section{Introduction}
\subsection{Linear actions}
Let $W$ be a finite group acting faithfully on a finite dimensional $\C$-vector space $\h$. The Weyl algebra $D(\h)$ of $\h$ admits an action of $W$, so $\C[W]\otimes_\C D(\h)$ becomes an algebra in a 
natural way. We denote this algebra by $\C[W]\ltimes D(\h)$. The \emph{rational Cherednik algebra}, defined by Etingof and Ginzburg \cite{EG}, is a universal flat deformation of this algebra. It is named thus because 
it is a degeneration of the \emph{double affine Hecke algebra} defined by Cherednik \cite{Cherednik}. We recall the definition of the rational Cherednik algebra below:
\begin{defn}\label{defnHc}
We define the set of \emph{reflections} in $W$ to be
\[
	S=\{s\in W\mid{\rm rk}(s-1)=1\}.
\]
For $s\in S$, let $\ac\in\h$ and $\alpha_s\in\h^*$ be the nontrivial eigenvectors of $s$, with eigenvalues $\lambda_s^{-1}$ and 
$\lambda_s$, normalised so that $\langle\ac,\alpha_s\rangle=2$. Given a $W$-invariant function $c:S\rightarrow\C$, the \emph{rational Cherednik algebra} 
$H_c(W,\h)$ is the unital associative $\C$-algebra generated by $\h$, $\h^*$ and $W$, with relations
\begin{eqnarray*}
	wx&=&\act[w]xw,\\
	wy&=&\act[w]yw,\\{}
	[x,x']&=&0,\\{}
	[y,y']&=&0,\\{}
	[y,x]&=&\langle y,x\rangle-\sum_{s\in S}c(s)\langle y,\alpha_s\rangle\langle\ac,x\rangle s,
\end{eqnarray*}
for $x,\,x'\in\h^*$, $y,\,y'\in\h$ and $w\in W$.
\end{defn}
If there is no risk of confusion, we denote this algebra simply by $H_c$.

Much progress has been made in the representation theory of $H_c$ by restricting attention to finitely generated modules on which $\h$ acts locally nilpotently. The category of such modules, introduced by Opdam and 
Rouquier \cite{GGOR}, is denoted by $\cO(H_c)$ and displays many similarities with ``category $\cO$" for semisimple complex Lie algebras; this point of view is explained in \cite{RouquierSurvey}. The natural homomorphism 
$\C[\h]\rightarrow H_c$ allows us to think of such modules as coherent sheaves on the complex variety $\h$. By completing at various points of $\h$, Bezrukavnikov and Etingof \cite{BE} characterised the possible support sets of 
such a module, showing in particular that any irreducible component of this set is the set of fixed points of some subgroup of $W$. Moreover they constructed the following flat connections from these modules (see 
Proposition 3.20 of \cite{BE}).
\begin{prop}\label{flatconn}
Suppose $M\in\cO(H_c)$ and $W'$ is a subgroup of $W$. Let $Y$ be the set of points in $\h$ whose stabiliser is $W'$, and let $i_Y:Y\hookrightarrow\h$ be the inclusion. Denoting by $Sh(M)$ the coherent sheaf on $\h$ corresponding 
to the $\C[\h]$-module $M$, there is a flat connection on the coherent sheaf pullback $i_Y^*Sh(M)$ determined by
\[
	\nabla_ym=ym-\sum_{s\in S\setminus W'}c(s)\langle y,\alpha_s\rangle\frac{2}{1-\lambda_s}\frac{1}{\alpha_s}(s-1)m
\]
for $m\in M$ and $y\in\h^{W'}$.
\end{prop}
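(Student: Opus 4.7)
My plan is to first interpret the formula on an appropriate localisation of $M$, then show it descends to a derivation of the pullback, and finally establish flatness. I would begin by unpacking the geometry. The set $Y = \h^{W'} \setminus \bigcup_{W'' \supsetneq W'}\h^{W''}$ is an open subset of the linear subspace $\h^{W'}$, whose tangent bundle is trivial with fibre $\h^{W'}$, so each $y \in \h^{W'}$ acts on $Y$ as a constant vector field. For $s \in S \setminus W'$ no point of $Y$ is fixed by $s$, so $\alpha_s$ is invertible on $Y$ and $1/\alpha_s$ is a regular function there. For $s \in W'$ and $y \in \h^{W'}$ the equality $sy = y$, together with $\lambda_s \neq 1$, forces $\langle y, \alpha_s\rangle = 0$; thus the restriction of the sum to $s \notin W'$ is both essential (to invert $\alpha_s$) and free of charge (the missing terms have zero coefficient). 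The formula therefore gives a well-defined $\C$-linear operator $\tilde\nabla_y$ on the localisation $M_{loc} := M[\alpha_s^{-1} : s \in S \setminus W']$.

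Next I would check that $\tilde\nabla_y$ descends to $i_Y^*Sh(M) = M_{loc}/I_{W'}M_{loc}$, where $I_{W'} \subset \cO(\h)$ is the vanishing ideal of $\h^{W'}$, and satisfies the Leibniz rule over $\cO_Y$. Both reduce to the single identity
\[
\tilde\nabla_y(xm) = \langle y, x\rangle m + x\,\tilde\nabla_y m, \qquad x \in \h^*,\ m \in M.
\]
Combining the relation $[y,x] = \langle y,x\rangle - \sum_{s \in S}c(s)\langle y,\alpha_s\rangle\langle\ac,x\rangle s$ from Definition~\ref{defnHc} with the reflection computation $\act[s]x - x = \frac{(\lambda_s - 1)\langle\ac,x\rangle}{2}\alpha_s$ yields the key cancellation $\frac{1}{\alpha_s}(sx - xs) = \frac{\lambda_s - 1}{2}\langle\ac,x\rangle s$. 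After expansion the correction terms in $\tilde\nabla_y(xm)$ cancel precisely against the reflection part of $[y,x]$ restricted to $s \notin W'$, while the contributions with $s \in W'$ vanish thanks to $\langle y,\alpha_s\rangle = 0$. Applying this identity to $x \in I_{W'}$ gives $\tilde\nabla_y(xm) = x\,\tilde\nabla_y m$, so $\tilde\nabla_y$ preserves $I_{W'}M_{loc}$ and descends; the Leibniz rule over $\cO_Y$ is the same identity reinterpreted on the quotient.

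The principal obstacle is flatness, $[\nabla_y, \nabla_{y'}] = 0$ for $y, y' \in \h^{W'}$. I would establish this as an identity in the localised algebra $H_c[\alpha_s^{-1} : s \in S \setminus W']$ acting on $M_{loc}$. Setting $A_s = \frac{1}{\alpha_s}(s-1)$ and $k_y^s = \frac{2c(s)\langle y,\alpha_s\rangle}{1 - \lambda_s}$, expanding the commutator together with $[y,y'] = 0$ reduces the problem to cancelling $k_{y'}^s[y,A_s] - k_y^s[y',A_s]$ and the quadratic contributions $(k_y^s k_{y'}^{s'} - k_{y'}^s k_y^{s'})A_s A_{s'}$, using the reflection arithmetic from above. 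A cleaner route is to pass to the full localisation $H_c[\alpha_s^{-1} : s \in S]$: there the extended Dunkl--Opdam operators $T_y = y - \sum_{s \in S}k_y^s A_s$ are defined for every $y \in \h$ and satisfy $[T_y, T_{y'}] = 0$, a classical consequence of $[y,y']=0$ in $H_c$; since $T_y = \nabla_y$ whenever $y \in \h^{W'}$, the identity transfers back along the inclusion, the $\alpha_s$ for $s \in W'$ being non-zero-divisors by PBW. The real content is the combinatorial bookkeeping of reflection products, which is where most of the work lies.
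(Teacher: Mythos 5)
Your argument is correct, and it is a genuinely different route from the one the paper takes. The paper does not prove Proposition~\ref{flatconn} directly: it records it as Proposition~3.20 of \cite{BE} and observes that it is a special case of Theorem~\ref{coh}(2), which is proved in Section~\ref{Sec:coh} by a formal-completion argument. That proof passes to the formal neighbourhood of $Y$, uses Propositions~\ref{linear} and~\ref{linearH} to identify the completed Cherednik algebra with $D_{i_Y^*\omega}(U\cap Y)\otimes H_c(W',\h)$ (up to a twist by a $1$-form and a matrix algebra), and then extracts the $D$-module structure on the quotient from the tensor factorisation, checking independence of the choices of $\phi$ and $\alpha$. What that machinery buys is generality: it covers arbitrary smooth varieties, non-trivial $2$-form twists, and arbitrary parabolics, and is exactly what is needed later for Theorem~\ref{coh}(3)--(6).

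Your proof instead stays inside the linear situation and works directly with the explicit formula. The two ingredients are exactly the right ones: (i) the Leibniz identity $\tilde\nabla_y(xm)=\langle y,x\rangle m+x\tilde\nabla_ym$, which follows from $[y,x]$ plus the reflection identity $\act[s]x-x=\tfrac{(\lambda_s-1)\langle\ac,x\rangle}{2}\alpha_s$ and the vanishing $\langle y,\alpha_s\rangle=0$ for $s\in W'$; this both shows that $\tilde\nabla_y$ preserves $I_{W'}M_{loc}$ (so the operator descends to $i_Y^*Sh(M)=M_{loc}/I_{W'}M_{loc}$) and gives the $\cO_Y$-Leibniz rule on the quotient; and (ii) flatness, which you reduce to commutativity of the ``full'' operators $T_y=y-\sum_{s\in S}k_y^s A_s$ in $H_c[\alpha_s^{-1}:s\in S]$. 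One expository caveat there: $[T_y,T_{y'}]=0$ is not a formal consequence of $[y,y']=0$ by expanding commutators --- the clean reason is that under the Dunkl embedding $H_c[\alpha_s^{-1}:s\in S]\hookrightarrow\C[W]\ltimes D(\hr)$ one has $T_y\mapsto\partial_y$, and constant vector fields commute. Once that is said, your descent step is fine: $T_y$ and $\tilde\nabla_y$ coincide because the extra terms have coefficient $k_y^s=0$ for $s\in W'$, and the map $H_c[\alpha_s^{-1}:s\notin W']\to H_c[\alpha_s^{-1}:s\in S]$ is injective since the $\alpha_s$ are non-zero-divisors by the PBW filtration, so $[\tilde\nabla_y,\tilde\nabla_{y'}]=0$ already holds on $M_{loc}$ and hence on the quotient. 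Your approach is more elementary and self-contained for the linear case, but does not generalise to the variety setting the paper needs; the paper's approach is heavier but is reused throughout Section~\ref{Sec:coh}.
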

This flat connection is a special case of Theorem \ref{coh}(2) below. In fact this statement holds for any module $M$ in the category $H_c\coh$ of modules finitely generated over $\C[\h]\subseteq H_c$. This allows us to give the 
following alternative characterisation of the category $\cO(H_c)$.
\begin{prop}\label{RS}
The category $\cO(H_c)$ is a Serre subcategory of $H_c\coh$. Moreover given an irreducible $M\in H_c\coh$, let $W'\subseteq W$ be a subgroup whose fixed point set is a component of $\supp M$. Then $M$ lies in $\cO(H_c)$ if and only 
if the flat connection of Proposition \ref{flatconn} has regular singularities.
\end{prop}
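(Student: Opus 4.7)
For the first claim, I would verify the three axioms of a Serre subcategory directly. If $M\in\cO(H_c)$ and $N\subseteq M$, then $N$ is locally nilpotent for the $\h$-action because $M$ is, and $N$ is finitely generated over $\C[\h]$ because $\C[\h]$ is Noetherian and $M$ is $\C[\h]$-coherent; the quotient $M/N$ is handled identically. For an extension $0\to M'\to M\to M''\to 0$ with $M',M''\in\cO(H_c)$, the middle term is $\C[\h]$-coherent since coherence is closed under extensions, and given $m\in M$ with image $\bar m\in M''$, choose $k$ with $U(\h)_{\geq k}\cdot\bar m=0$ so that $U(\h)_{\geq k}m\subseteq M'$, then use local nilpotence in $M'$ to kill $m$ with a higher power. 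A small point to record first: objects of $\cO(H_c)$ really do lie in $H_c\coh$, which follows from local $\h$-nilpotence plus the PBW decomposition $H_c=\C[\h]\otimes\C[W]\otimes S(\h)$.

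For the second claim, the content is the equivalence ``$\h$ acts locally nilpotently on $M$'' $\Leftrightarrow$ ``$i_Y^*Sh(M)$ is a flat connection with regular singularities.'' I would test the RS condition at the generic point of each irreducible divisor in the closure $\bar Y$, including divisors at infinity in a smooth compactification of $\h^{W'}$. For the divisors at infinity, the Euler element $\eu\in H_c$ is the key tool: the condition $M\in\cO$ is equivalent to local finiteness of $\eu$, and the eigenspace filtration with respect to $\eu$ produces a Deligne-type lattice on which $\nabla_y$ for $y\in\h^{W'}$ has at worst simple poles at infinity, giving RS. For the divisors along which $Y$ borders a smaller stratum (fixed-point set of a larger subgroup $W''\supsetneq W'$), I would appeal to the Bezrukavnikov--Etingof completion theorem: completing $H_c$ at a generic point of such a stratum reduces $M$ to a module over a Cherednik algebra for $W''$ acting on the normal slice. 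Local $\h$-nilpotence becomes local nilpotence of the corresponding generators, which in turn is equivalent to the $\nabla_y$ having regular poles along this divisor (the explicit $\frac{1}{\alpha_s}(s-1)$ correction term in Proposition \ref{flatconn} already gives the expected first-order pole structure).

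The converse direction is established by running the same local analysis backwards: given RS, the moderate growth of flat sections combined with the completion theorem forces any section of $i_Y^*Sh(M)$ on a neighborhood of a boundary point to be $\h$-finite, and since $M$ is irreducible and $\C[\h]$-coherent, a generating set of $\h$-finite vectors must be $\h$-nilpotent.

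The main obstacle is making the second equivalence precise along the stratum boundaries: the reduction to a smaller Cherednik algebra via the completion theorem must be combined with an analysis of how the connection $\nabla$ from Proposition \ref{flatconn} interacts with this reduction, so that ``RS along this divisor'' translates cleanly into ``$\h^{W''}/\h^{W'}$ acts nilpotently after completion.'' The divisors-at-infinity argument via $\eu$ is comparatively routine, but the stratum-boundary argument is where the heart of the proof lies, and it relies essentially on Theorem \ref{coh}(2) (of which Proposition \ref{flatconn} is the special case) together with the compatibility of completions with the Cherednik algebra structure.
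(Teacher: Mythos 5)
Your first-part verification that $\cO(H_c)$ is closed under subobjects, quotients, and extensions inside $H_c\coh$ is correct and standard, though note the paper does not actually prove this for $\cO(H_c)$ from scratch — it instead \emph{defines} $H_c\rs$ to be a Serre subcategory (Definition \ref{RSdefn}) and then shows the two categories coincide.

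Your second part, however, misses the mechanism the paper actually uses and substitutes a sketch that has a real gap. For the direction ``$M\in\cO(H_c)$ implies RS,'' the paper does not reason stratum by stratum: it simply uses the surjection $i_Y^*Sh(M(\tau))\twoheadrightarrow i_Y^*Sh(L(\tau))$, computes the connection on the Verma side explicitly from Proposition \ref{flatconn}, and observes first-order poles — a one-paragraph argument needing no compactification or Deligne lattices. More importantly, for the converse your plan is to analyze poles of $\nabla$ along boundary divisors and invoke the Bezrukavnikov--Etingof completion theorem, but you never say how ``moderate growth of flat sections'' yields $\h$-finiteness, and the jump from ``a generating set of $\h$-finite vectors'' to ``$\h$-nilpotent'' is not automatic for an arbitrary coherent module — this is precisely the equivalence you are trying to establish. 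The paper's actual route is a single global reduction: find a homogeneous $f$ vanishing on the other components of $\supp M$ but not on $Y$, embed $M$ into $\bigoplus_w M[(\act[w]f)^{-1}]$ (using Theorem \ref{coh}(1) and irreducibility), filter $M[f^{-1}]$ by powers of $I_Y$, and observe that local finiteness of $\eu$ on $M[f^{-1}]/I_YM[f^{-1}]=\cO(U\cap Y)\otimes_{\C[\h]}M$ is controlled by the Euler \emph{vector field} $\xi$ on the cone $\h^{W'}$. The crucial ingredient you are missing is Lemma \ref{monodromic}: on a $\C^*$-invariant open subset of a cone, the Euler vector field acts locally finitely on any $\cO$-coherent $\D$-module with regular singularities (proved via Riemann--Hilbert). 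An explicit identity then relates $\xi$ to $\eu$ up to a locally finite correction term involving $\C[W']$, and condition (1) follows. Without that lemma and the $\eu$--$\xi$ comparison, your converse direction does not close.
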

\subsection{Actions on Varieties}
Now suppose $W$ acts on a smooth complex algebraic variety $X$, and $\omega$ is a $W$-invariant closed $2$-form on $X$. We recall briefly the notion of twisted differential operators \cite{BB}. Let $\D_\omega(X)$ denote the 
sheaf of algebras generated over $\cO_X$ by the tangent bundle $\cT X$, with relations
\[
	xy-yx=[x,y]+\omega(x,y),\hspace{20mm}xf-fx=x(f)
\]
for vector fields $x$ and $y$ and regular functions $f$, where $[\cdot,\cdot]$ denotes the usual Lie bracket of vector fields (note that throughout this paper, scripted letters will generally denote sheaves of modules or algebras). 
To give an action of $\D_\omega(X)$ on a quasi-coherent sheaf $\cM$ is equivalent to giving a connection on $\cM$ with curvature $\omega$. Given an immersion of a smooth curve $i:C\hookrightarrow X$, we obtain a connection on the 
pullback $i^*\cM$ which is trivially flat, so $i^*\cM$ may be thought of as an untwisted $\D$-module. We say $\cM$ has regular singularities if $i^*\cM$ has regular singularities in the usual sense for every such immersion. This 
definition was given by Finkelberg and Ginzburg \cite{FG_TDOs} for 2-forms which are \'etale-locally exact. In fact we will only be interested in coherent sheaves $\cM$ over $\D_\omega(X)$, and the existence of such a sheaf 
ensures that $\omega$ is Zariski-locally exact.

Any 1-form $\alpha$ gives rise to an isomorphism $\D_\omega(X)\cong\D_{\omega+d\alpha}(X)$. Therefore by patching sheaves of algebras of the form $\D_\omega(X)$, we obtain a sheaf of algebras $\D_\psi(X)$ corresponding 
to any class $\psi\in H^2(X,\Omega_X^{\geq1})$, where $\Omega_X^{\geq1}$ is the two step complex $\Omega_X^1\rightarrow\Omega_X^{2,cl}$ lying in degrees $1$ and $2$, $\Omega_X^1$ is the sheaf of 1-forms and $\Omega_X^{2,cl}$ 
the sheaf of closed 2-forms. When $X$ is affine, any such class is represented by a global 2-form. Note that our definition of regular singularities depends on a global 2-form chosen to represent the class.

Etingof \cite{EtingofVariety} has defined a sheaf of algebras $\cH_\cco(W,X)$ on $X/W$, generalizing Definition \ref{defnHc}. We will recall this definition below (see Definition \ref{HWX}) after developing some preliminaries. 
There is a natural copy of the structure sheaf $\cO_X$ in $\cH_\cco(W,X)$, and we will consider the full subcategory $\cH_\cco\coh$ of $\cH_\cco(W,X)$-mod, consisting of sheaves of modules which are coherent as $\cO_X$-modules. 
Our first goal is to classify possible support sets of such modules, in analogy with the results of \cite{BE}. Explicitly, given a subgroup $W'\subseteq W$, let
\begin{eqnarray*}
	X^{W'}&=&\{x\in X\mid\act[w]x=x\text{ for }w\in W'\},\\
	X^{W'}_{\rm reg}&=&\{x\in X\mid\stab_W(x)=W'\}.
\end{eqnarray*}
Also define
\[
	P=\{Y\mid Y\text{ is a component of }X^{W'}_{\rm reg}\text{ for some }W'\subseteq W\}.
\]
These subsets are locally closed, and may be viewed as (non-affine) varieties. Let $P'$ denote the set of all $Y\in P$ such that $H_c(W',T_xX/T_xX^{W'})$ admits a nonzero finite dimensional module, where $x$ is any point of 
$Y$ and $W'=\stab_W(x)$. We will prove:
\begin{thm}\label{coh}
Suppose $\cM\in\cH_\cco\coh$.
\begin{enumerate}
\item
	Suppose $Z\subseteq X$ is a closed $W$-invariant subset of $X$, and consider the subsheaf of ``$Z$-torsion" elements in $\cM$,
	\[
		\Gamma_Z(\cM)(U)=\{m\in\cM(U)\mid\supp m\subseteq Z\}.
	\]
	That is, $\Gamma_Z(\cM)$ is the sum of all coherent subsheaves of $\cM$ which are set-theoretically supported on $Z$. Then $\Gamma_Z(\cM)$ is an $\cH_\cco$-submodule of $\cM$.
\item
	Let $Y\in P$ and let $i_Y:Y\hookrightarrow X$ be the inclusion. The coherent sheaf pullback $i_Y^*(\cM)$ on $Y$ admits a natural 
	action of $\D_{i_Y^*\psi}(Y)$. In particular, if $\psi=0$, then $i_Y^*(\cM)$ admits a natural
	flat connection.
\item
	The set-theoretical support of $\cM$ has the form
	\[
		\supp\cM=\bigcup_{Y\in P_{\cM}}\overline Y
	\]
	for some $W$-invariant subset $P_{\cM}\subseteq P'$.
\item There is an integer $K>0$, depending only on $c$, $W$ and $X$, such that any such $\cM$ is scheme-theoretically supported on the $K^{\rm th}$ neighbourhood of its set-theoretical support.
\item Every object of $\cH_\cco\coh$ has finite length.
\item If $\cM$ is irreducible then we may take $P_{\cM}$ in part (3) to be a single $W$-orbit in $P'$.
\end{enumerate}
\end{thm}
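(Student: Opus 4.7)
The common strategy is to localize each question and reduce to the linear case of Bezrukavnikov and Etingof \cite{BE}. The key input from Etingof \cite{EtingofVariety} is that, writing $W'=\stab_W(x)$ and $V=T_xX$ for a point $x\in X$, the formal completion of $\cH_\cco(W,X)$ at the image of $x$ in $X/W$ is Morita equivalent to the completion of $H_c(W',V)$ at the origin. Under this equivalence, the coherent $\cH_\cco$-module $\cM$ becomes a finitely generated module over the completed linear Cherednik algebra, and the results of \cite{BE} may be applied.

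For (1), the statement is local on $X/W$, and $\cH_\cco$ is locally generated by $\cO_X$, $W$, and Dunkl operators. An explicit computation with the Dunkl formula, paralleling the argument in \cite{BE}, shows that Dunkl operators preserve the subsheaf of sections supported in any closed $W$-invariant subset $Z$: the reflection terms $(s-1)/\alpha_s$ cannot push support out of $Z$ because $Z$ is $W$-invariant and the denominators produce only poles along hypersurfaces in $X$. For (2), one transports the flat connection of Proposition \ref{flatconn} through the Morita equivalence. Its formula depends on a local splitting of the normal bundle of $X^{W'}\subseteq X$; different splittings differ by a 1-form, which shifts the twist by an exact 2-form, so the resulting $\D_{i_Y^*\psi}(Y)$-action is canonical even though the naive connection is not. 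The main technical point is verifying that the local connections glue globally to produce exactly the twist by $i_Y^*\psi$; this is where the class $\psi$ genuinely enters.

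Parts (3)--(5) then follow. For (3), at a generic point $x$ of an irreducible component of $\supp\cM$ with stabilizer $W'$, the local Morita reduction combined with \cite{BE} forces the corresponding $H_c(W',V)$-module to contain a nonzero finite-dimensional $H_c(W',V/V^{W'})$-module, so the stratum lies in $P'$. The uniform bound $K$ in (4) may be taken to be the maximum, over the finitely many $W$-conjugacy classes of stabilizers, of the nilpotency order of the ideal of $V^{W'}$ acting on a finite-dimensional $H_c(W',V/V^{W'})$-module; such a bound is finite and uniform because there are only finitely many stratum types. Part (5) then follows from (3) and (4): $\cM$ is supported on finitely many strata, its scheme-theoretic thickening is bounded, and its generic rank on each stratum is bounded by coherence, so $\cM$ has finite length.

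For (6), choose $P_\cM$ so that each $\overline Y$ with $Y\in P_\cM$ is an irreducible component of $\supp\cM$. If $P_\cM$ contained two or more $W$-orbits, pick a single $W$-orbit $Q\subsetneq P_\cM$ and set $Z=\bigcup_{Y\in Q}\overline Y$; then $Z$ is closed, $W$-invariant, nonempty, and properly contained in $\supp\cM$. By (1), $\Gamma_Z(\cM)$ is an $\cH_\cco$-submodule of $\cM$. It is nonzero: any open $U$ meeting only components of $\supp\cM$ that lie in $Z$ satisfies $\Gamma_Z(\cM)(U)=\cM(U)\neq 0$. It is proper because $\supp\cM\not\subseteq Z$. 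This contradicts irreducibility, so $P_\cM$ must be a single $W$-orbit.
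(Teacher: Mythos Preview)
Your overall strategy---localize and reduce to the linear case via the completion/Morita results of \cite{EtingofVariety} and \cite{BE}---is exactly the paper's approach, and parts (2)--(4) and (6) are essentially correct sketches. Your argument for (6) differs slightly from the paper's (the paper picks one $W$-orbit, shows the torsion along the \emph{complement} is zero, and deduces $\supp\cM=\overline{WY}$ directly), but your version is equally valid.

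There are two genuine issues. First, in (1) your explanation of why Dunkl operators preserve $\Gamma_Z(\cM)$ is the wrong mechanism: the point is not about poles of the rational functions $\zeta_{Z'}(v)$, but the commutator relation $[D_v,\cO(X)]\subseteq\C[W]\ltimes\cO(X)$. If $I$ is the $W$-invariant ideal of $Z$, then $WI=IW$ and one shows inductively that $I^{k+1}D_v\subseteq H_\co I^k$; hence $I^km=0$ implies $I^{k+1}D_vm=0$. Your ``poles along hypersurfaces'' remark does not establish this.

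Second, and more seriously, your argument for (5) is not a proof. Knowing that $\cM$ is coherent, supported on finitely many strata, and killed by $\cI^K$ does not by itself give finite length: you must show that any proper nonzero $\cH_\cco$-submodule either has strictly smaller support or strictly smaller ``size'' at some generic point. The paper does this by induction on $\supp\cM$: for $Y\in P_\cM$, it localizes at the generic point of $Y$ and truncates by $I_Y^K$ to obtain an Artinian ring $R$, over which $R\otimes\Gamma(U,\cM)$ has finite length. One then argues that for a proper nonzero submodule $\cN'$ of (a suitable quotient of) $\cM$, both $R\otimes\cN'$ and $R\otimes(\cM/\cN')$ are nonzero with strictly smaller $R$-length, reducing to the inductive hypothesis. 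Your sentence ``generic rank on each stratum is bounded by coherence, so $\cM$ has finite length'' skips this entire induction; in particular, note that $i_Y^*$ is not left exact, so ``generic rank'' does not obviously decrease along submodules without the Artinian localization argument.
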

We would like a sensible subcategory of $\cH_\cco\coh$ in which to study the representation theory of $\cH_\cco$, analogous to the category $\cO(H_c)$ in the linear case. Motivated by Proposition \ref{RS}, we make the following 
definition. Again we need to choose a global 2-form $\omega$ representing the class $\psi$ for this definition.
\begin{defn}\label{RSdefn}
Let $\cH_\co\rs$ denote the Serre subcategory of $\cH_\co\coh$, such that an irreducible $\cM\in\cH_\co\coh$ lies in $\cH_\co\rs$ exactly when the connection on $i_Y^*(M)$ given in Theorem \ref{coh}(2) has regular singularities, 
where $Y\in P_{\cM}$ is as in Theorem \ref{coh}(6).
\end{defn}
For a linear action, Proposition \ref{RS} shows that this category coincides with $\cO(H_c)$. Nevertheless we will use the notation $\cH_\co\rs$ even in the linear case to avoid confusion with the structure sheaf of a variety.
\subsection{The Type A Case}
Taking $X$ to be an open subset of a vector space, the above will be of use in the sequel, in which we study representations of $H_c=H_c(S_n,\C^n)$, where $S_n$ is the symmetric group acting on $\C^n$ by permuting coordinates. 
The category $H_c\rs$ is semisimple unless $c$ is rational with denominator between $2$ and $n$ (see \cite{BEG1}), so we take $c=\frac rm$ where $m\geq2$ is coprime with $r$. It is shown in \cite{BE} (and follows from 
Theorem \ref{coh}) that the support of any module in $H_c\rs$ is of the form
\[
	X_q=\{b\in\h\mid\stab_{S_n}(b)\cong S_m^q\}
\]
for some integer $q$ with $0\leq q\leq\frac nm$. It is known that the irreducible modules in $H_c\rs$ are parameterised by the irreducible representations of $\C[S_n]$, which are in turn parameterised by partitions of $n$. 
Given a partition $\lambda\vdash n$, let $\tau_\lambda$ and $L(\tau_\lambda)$ denote the corresponding representation of $S_n$ and $H_c$ respectively. The support of the latter is determined by the following.
\begin{thm}\label{suppSnpart}
If $c>0$, then the support of the $H_c$-module $L(\tau_\lambda)$ is $X_{q_m(\lambda)}$, where
\[
	q_m(\lambda)=\sum_{i\geq1}i\left\lfloor\frac{\lambda_i-\lambda_{i+1}}{m}\right\rfloor.
\]
If $c<0$, the support of $L(\tau_\lambda)$ is $X_{q_m(\lambda')}$, where $\lambda'$ is the transpose of $\lambda$.
\end{thm}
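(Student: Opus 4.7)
The strategy is to reduce the support question to a combinatorial problem about Specht modules for a Hecke algebra at a root of unity, via the support-stratification equivalence announced in the abstract.

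The sign involution $H_c \cong H_{-c}$ sending $L(\tau_\lambda) \mapsto L(\tau_{\lambda'})$ lets us reduce to the case $c = r/m > 0$; the $c < 0$ statement then follows at once. By Theorem \ref{coh}(6), $\supp L(\tau_\lambda)$ is the closure of a single $S_n$-orbit $Y \in P'$. For the $S_n$-action on $\h = \C^n$ every stabiliser is a Young subgroup, and by the classification of finite-dimensional modules for type $A$ Cherednik algebras at parameter $r/m$ (see \cite{BEG1}) the only Young subgroups of some $S_k$ admitting a nonzero finite-dimensional $H_{r/m}(S_k,\C^k)$-module are the $S_m^q$. Hence $\supp L(\tau_\lambda) = X_q$ for a unique $q = q(\lambda)$, and the task is to show $q(\lambda) = q_m(\lambda)$.

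I would next characterise $q(\lambda)$ via the Bezrukavnikov-Etingof parabolic restriction functor $\mathrm{Res}_{S_m^q \times S_{n-mq}}$. By Theorem \ref{coh}(3) together with the compatibility of restriction with supports, $q(\lambda)$ is the largest $q$ for which this restriction has a composition factor of the form $F^{\boxtimes q} \boxtimes N$, where $F$ is the unique finite-dimensional irreducible of $H_c(S_m,\C^m)$ (namely $L(\mathrm{triv})$, since $c > 0$) and $N$ is some object of $\cO\bigl(H_c(S_{n-mq},\C^{n-mq})\bigr)$. Applying the KZ functor then translates this into a question about the Hecke algebra $\cH_n(\zeta)$ of type $A_{n-1}$ at the primitive $m$-th root of unity $\zeta = e^{-2\pi i r/m}$: since KZ intertwines Bezrukavnikov-Etingof restriction with ordinary Hecke restriction, and sends $L(\tau_\lambda)$ to the Dipper-James head of the Specht module $S_\lambda$, the problem becomes: find the maximal $q$ for which $\mathrm{Res}^{\cH_n(\zeta)}_{\cH_m(\zeta)^{\otimes q} \otimes \cH_{n-mq}(\zeta)}(S_\lambda)$ contains the trivial module of the $\cH_m^{\otimes q}$ factor in its head.

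The endgame is a combinatorial calculation via Young's rule. Writing $a_i := \lambda_i - \lambda_{i+1}$, the Young diagram of $\lambda$ decomposes into rectangular column-bands of width $a_i$ and height $i$; from each band one can strip $\lfloor a_i / m \rfloor$ disjoint horizontal $m$-ribbons of height $i$, and each such ribbon detaches one copy of $S_m$ from the stabiliser while contributing $i$ to the block count, giving a total of $\sum_i i \lfloor a_i/m \rfloor = q_m(\lambda)$. Maximality follows because after stripping, every remaining column-band has width less than $m$, so no further horizontal $m$-ribbon can be removed without crossing a band boundary, which would destroy the trivial-head condition on the $\cH_m$-factor. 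The main obstacle I expect is the compatibility step: verifying that KZ composed with parabolic restriction detects the support jump precisely at $q_m(\lambda)$, which is exactly the role of the support-stratification equivalence promised in the abstract and will require careful bookkeeping of torsion subquotients under restriction.
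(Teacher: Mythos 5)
Your plan breaks at the step that is supposed to carry the main weight. You want to detect the support level $q(\lambda)$ by applying the KZ functor and reading information off the Dipper--James head of the Specht module $S_\lambda$. But KZ is, by construction, localisation to the open stratum $\hr$, so it kills every module of $H_c\rs$ whose support is a proper subset of $\h$; in particular, whenever $q_m(\lambda)>0$ you have $\supp L(\tau_\lambda)=X_{q_m(\lambda)}\subsetneq\h=X_0$ and hence $\mathrm{KZ}(L(\tau_\lambda))=0$. The identity ``KZ sends $L(\tau_\lambda)$ to the head of the Specht module'' is therefore only available for full-support modules, which are precisely the ones you are \emph{not} trying to classify. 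The same obstruction hits your restriction step: the compatibility of KZ with Bezrukavnikov--Etingof restriction reads $\mathrm{KZ}_{W'}\circ Res_b\cong \mathrm{Res}^{\mathrm{Hecke}}\circ\mathrm{KZ}_{S_n}$, so restricting first and then applying KZ over the Young subgroup still gives zero when $q_m(\lambda)>0$. The ``support-stratification equivalence'' invoked in the abstract is not KZ; it is a new family of functors $Loc^q$ (localisation to the $q$-th stratum $\hr'=\h^{S_m^q}_{\rm reg}$, followed by taking monodromy), and proving that their essential image is exactly the subcategory $\mathcal A$ cut out by the quadratic Hecke relations (Proposition \ref{LoqqA}) is the heart of the paper -- it occupies essentially all of Sections \ref{Sec:minsupp} and \ref{Sec:mon}, including the construction of the inverse functor $G$. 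That is what your phrase ``careful bookkeeping of torsion subquotients under restriction'' is concealing, and it is not a bookkeeping issue.

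There is a second, independent gap: your combinatorial endgame (stripping horizontal $m$-ribbons from column bands) is a heuristic, not an argument, and in any case it is not what the paper does. The paper first proves the containment $\supp L(\tau_\lambda)\subseteq X_{q_m(\lambda)}$ for $c>0$ (Theorem \ref{suppSnpartle}); this needs a nontrivial external input, namely the result of \cite{CEE} that $L(\tau_{m\mu})$ has minimal support in $H_c(S_{qm},\C^{qm})$, fed through the induction-functor Lemma \ref{indSn} to produce a nonzero map $M(\tau_{m\mu+\nu})\to Ind_b(L(\tau_{m\mu})\otimes L(\tau_\nu))$. The reverse containment is then obtained not by ribbon combinatorics but by a cardinality count: the bijection $(\mu,\nu)\mapsto m\mu+\nu$ (with $\mu\vdash q$ and $\nu'$ $m$-regular) matches the number of partitions $\lambda\vdash n$ with $q_m(\lambda)=q$ to the number of irreducibles in $\C[S_q]\otimes_\C H_\bq(S_p){\rm-mod}_{fd}$, which by the equivalence $F_{n,q}$ equals the number of irreducibles in $H_c\rs^q/H_c\rs^{q+1}$; an induction on $q$ then forces every $L(\tau_\lambda)$ with $q_m(\lambda)=q$ to land in that quotient. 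Your proposal supplies neither of these two inputs.
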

Note that any $\lambda\vdash n$ can be uniquely written as $m\mu+\nu$ where $\mu\vdash q_m(\lambda)$ and $\nu'$ is $m$-regular. In particular, the above proves the following conjecture of Bezrukavnikov and Okounkov. While this 
paper was in preparation, this result was generalised to the cyclotomic case by Shan and Vasserot \cite{ShanVasserot}.
\begin{cor}\label{BOconj}
Consider the universal enveloping algebra $A$ of the Heisenberg algebra, with generators $\{\alpha_i\mid i\in\Z,\,i\neq0\}$ and relation $[\alpha_i,\alpha_j]=i\delta_{i,-j}$. Consider the grading on $A$ defined by 
$\deg(\alpha_i)=i$. Let $F$ denote \emph{Fock space}, that is, the left $A$-module
\[
	F=A/{\rm span}\{A\alpha_i\mid i>0\}.
\]
The number of irreducibles in $H_c\rs$ whose support is $X_q$ is the dimension of the $qm$-eigenspace of the operator
\[
	\sum_{i>0}\alpha_{-im}\alpha_{im}
\]
acting on the degree $n$ part of $F$.
\end{cor}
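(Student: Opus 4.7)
The plan is to reduce the corollary to a classical identity on partitions and a direct eigenvalue computation on $F$. First I would translate the left-hand side. By Theorem \ref{suppSnpart}, for $c>0$ the irreducible $L(\tau_\lambda)$ has support $X_{q_m(\lambda)}$; for $c<0$ one replaces $\lambda$ by $\lambda'$, and since transposition is an involution on partitions of $n$ the count is unchanged. Hence in either case the number of irreducibles in $H_c\rs$ with support $X_q$ equals
\[
 N_q := |\{\lambda \vdash n : q_m(\lambda) = q\}|.
\]
Using the decomposition $\lambda = m\mu + \nu$ recalled just after Theorem \ref{suppSnpart} (with $\mu \vdash q$ and $\nu'$ an $m$-regular partition of $n-mq$), this factorises as $N_q = p(q)\cdot r_m(n-mq)$, where $p(q)$ is the number of partitions of $q$ and $r_m(k)$ is the number of $m$-regular partitions of $k$.

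Next I would compute the right-hand side. The degree $n$ part of $F$ has a basis $\{v_\lambda := \alpha_{-\lambda_1}\alpha_{-\lambda_2}\cdots\cdot 1\}_{\lambda \vdash n}$. Using the Heisenberg relation $[\alpha_k, \alpha_{-k}] = k$ together with $\alpha_k\cdot 1 = 0$ in $F$ for $k > 0$, a direct calculation gives $\alpha_{-k}\alpha_k v_\lambda = k\, m_k(\lambda)\, v_\lambda$, where $m_k(\lambda)$ is the multiplicity of $k$ as a part of $\lambda$. Summing over $k = im$ with $i > 0$ yields the eigenvalue
\[
 \sum_{i>0}(im)\, m_{im}(\lambda) = \sum_{j\,:\, m\mid \lambda_j} \lambda_j,
\]
namely the sum of the parts of $\lambda$ divisible by $m$. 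Hence the dimension of the $qm$-eigenspace is the number of $\lambda\vdash n$ whose $m$-divisible parts sum to $qm$; splitting each such $\lambda$ into its $m$-divisible parts (a partition of $qm$ with all parts divisible by $m$, i.e.\ $m$ times a partition of $q$) and its non-$m$-divisible parts (a partition of $n-mq$ with no part divisible by $m$) factorises this count as $p(q)\cdot r'_m(n-mq)$, where $r'_m(k)$ counts partitions of $k$ with no part divisible by $m$.

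The final step is Glaisher's classical theorem, $r_m(k) = r'_m(k)$, immediate from comparing generating functions $\prod_j (1-x^{jm})/(1-x^j) = \prod_{m\nmid j}(1-x^j)^{-1}$; this matches the two counts, completing the argument. The real work here is upstream, in Theorem \ref{suppSnpart}; once support sets are known, the corollary is essentially bookkeeping. The one place where care is needed is the Fock-space eigenvalue calculation — in particular the fact that $\sum\alpha_{-im}\alpha_{im}$ measures the \emph{sum} of the $m$-divisible parts rather than their number — but once this is pinned down, Glaisher's bijection closes the argument mechanically.
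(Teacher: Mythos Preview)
Your argument is correct and substantively identical to the paper's: both reduce the left-hand side to $p(q)\cdot(\text{\#\ $m$-regular partitions of }n-mq)$ via Theorem \ref{suppSnpart}, compute the Fock-space eigenvalue on the monomial basis as the sum of the $m$-divisible parts, and then match the two counts via the identity that $m$-regular partitions of $k$ are equinumerous with partitions of $k$ having no part divisible by $m$.

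The only stylistic difference is in the final matching step. The paper packages everything into a bivariate generating function $N(s,t)$, computing $\sum_{p\geq0}\bp_{p,m}t^p=\prod_{m\nmid n}(1-t^n)^{-1}$ along the way and then verifying $\tr_F(s^{A_1}t^{A_m})=N(s,t)$; you instead fix $n$ and $q$, split each partition into its $m$-divisible and non-$m$-divisible parts, and invoke Glaisher's theorem by name. Your route is slightly more direct and avoids the two-variable bookkeeping; the paper's route has the minor advantage of proving Glaisher's identity in passing rather than quoting it. Either way the content is the same.
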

Denoting by $H_c\rs^q$ the Serre subcategory of $H_c\rs$ consisting of all modules supported on $X_q$, we will determine the structure of the quotient category
\[
	H_c\rs^q/H_c\rs^{q+1}
\]
(where $H_c\rs^{\lfloor n/m\rfloor+1}$ is the subcategory containing only the zero module). Explicitly, let $p=n-qm$ and $\bq=e^{2\pi ic}$, and consider the \emph{Hecke algebra} $H_\bq(S_p)$ with generators 
$T_1,\ldots,T_{p-1}$ and relations
\begin{eqnarray*}
	T_iT_j&=&T_jT_i\text{ if }|i-j|>1,\\
	T_iT_{i+1}T_i&=&T_{i+1}T_iT_{i+1},\\
	(T_i-1)(T_i+\bq)&=&0.
\end{eqnarray*}
The irreducible modules over $H_\bq(S_p)$ are indexed by $m$-regular partitions of $p$ \cite{DJ}. Given $\nu\vdash p$ with $q_m(\nu')=0$, let $D_\nu$ denote the corresponding irreducible. We will show:
\begin{thm}\label{rsqrsq+1}
With $c=\frac rm$ and $\bq=e^{2\pi ic}$, the category $H_c\rs^q/H_c\rs^{q+1}$ is equivalent to the category of finite dimensional modules over $\C[S_q]\otimes_\C H_\bq(S_p)$. If $\nu\vdash p$ is $m$-regular, and $\mu\vdash q$, 
the irreducible in $H_c\rs^q$ corresponding to $\tau_\mu\otimes D_\nu$ under this equivalence is $L(\tau_{m\mu+\nu'})$ if $c>0$, and $L(\tau_{(m\mu+\nu')'})$ if $c<0$.
\end{thm}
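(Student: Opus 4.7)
The plan is a three-step reduction: completion at a generic point of the stratum, Riemann--Hilbert in the tangential direction, and identification of the Hecke parameters via the residues of the KZ connection.

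First, fix a generic point $b\in X_q\setminus X_{q+1}$, so that $\stab_{S_n}(b)=W'=S_m^q$ after conjugation, with $N_{S_n}(W')/W'\cong S_q\times S_p$. Using the Bezrukavnikov--Etingof completion functor at $b$, combined with Theorem~\ref{coh}(2) for the tangential $\D$-module structure and the fact that modules set-theoretically supported in $X_{q+1}$ are killed modulo $H_c\rs^{q+1}$, the quotient $H_c\rs^q/H_c\rs^{q+1}$ becomes equivalent to a category of $(S_q\times S_p)$-equivariant pairs $(V,\cN)$, where $V$ is a finite dimensional $H_c(W',\h/\h^{W'})$-module and $\cN$ is a regular-singular $\D$-module on an open subset of $\h^{W'}$.

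Second, handle each factor. Since $H_c(W',\h/\h^{W'})\cong H_c(S_m,\C^{m-1})^{\otimes q}$, and for $c=r/m$ with $\gcd(r,m)=1$ the algebra $H_c(S_m,\C^{m-1})$ admits a unique irreducible finite dimensional module $Y_c$ (Berest--Etingof--Ginzburg), the $V$-data is an $S_q$-module structure on $Y_c^{\otimes q}$ (with $S_q$ permuting tensor factors), i.e.\ a $\C[S_q]$-module. For the $\cN$-data, Riemann--Hilbert identifies it with a finite dimensional representation of the fundamental group of the regular orbit space of $S_q\times S_p$ acting on its support, and the KZ functor cuts this down to modules over a Hecke algebra of $S_q\times S_p$ with parameters determined by the residues of the connection of Proposition~\ref{flatconn}. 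Each $S_p$-reflection hyperplane in $\h^{W'}$ is the restriction of a single $W$-reflection hyperplane with weight $c$, giving Hecke parameter $\bq=e^{2\pi ic}$; each $S_q$-reflection hyperplane collapses $m^2$ reflection hyperplanes in $\h$, and a direct calculation shows that the resulting residue operator acts on $Y_c^{\otimes q}$ with integer eigenvalues, so the corresponding Hecke parameter is trivial and this factor contributes $\C[S_q]$.

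Finally, to match the parameterization, use Theorem~\ref{suppSnpart}: every $\lambda\vdash n$ with $q_m(\lambda)=q$ is uniquely of the form $m\mu+\nu'$ with $\mu\vdash q$ and $\nu\vdash p$ an $m$-regular partition. Counting simples on both sides forces $L(\tau_{m\mu+\nu'})$ (for $c>0$) to correspond to $\tau_\mu\otimes D_\nu$ under the equivalence, and the case $c<0$ follows by the transpose convention. The principal obstacle is the residue computation for block-swap hyperplanes in $\h^{W'}$: one must show that the sum $c\sum_s(s-1)$, taken over the $m^2$ transpositions in $W$ between two blocks, acts on $Y_c^{\otimes q}$ restricted to the two tensor factors with integer eigenvalues, so that the monodromy on the KZ side is $\pm1$ and the Hecke deformation trivializes. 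This depends on the detailed structure of $Y_c$ as an $H_c(S_m,\C^{m-1})$-module and the integrality of certain character values on $Y_c\otimes Y_c$ under the flip involution; the analogous calculation in the cyclotomic case appears in the work of Shan--Vasserot cited in the introduction.
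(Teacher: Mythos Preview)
Your outline has the right architecture---localize to the stratum, pass to monodromy, identify the Hecke parameters---but there are two genuine gaps that the paper's proof works hard to close, and which your proposal does not address.

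First, you assert that completion at $b$ together with Theorem~\ref{coh}(2) makes $H_c\rs^q/H_c\rs^{q+1}$ \emph{equivalent} to a category of pairs $(V,\cN)$, but you only describe one direction. The serious difficulty is essential surjectivity: given an equivariant regular-singular $\D$-module $\cN$ on the open stratum (satisfying the right monodromy conditions), how do you produce an actual object of $H_c\rs^q$ restricting to it? Completion at a single point does not help, because what you need is a $\C[\h]$-finite $H_c$-module, and the completion is far from $\C[\h]$-finite. The paper handles this (Proposition~\ref{LoqqA}) by constructing an explicit right inverse $G$: inside the ``large'' induced module $\bar G\cN$ it singles out, for each boundary hyperplane $Z$ of the stratum, a subspace $D_Z\cN$ with controlled growth near $Z$, then intersects over all $Z$ and checks the result is finitely generated and generates the right thing after localization. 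This is the technical heart of the argument and is not a formality; in particular it uses Theorem~\ref{twosided} (that $I_q$ is radical when $c=1/m$) in an essential way, and this is why the paper first reduces to $c=1/m$ via Rouquier's equivalence rather than working directly at $c=r/m$ as you propose.

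Second, your final step---``counting simples on both sides forces $L(\tau_{m\mu+\nu'})$ to correspond to $\tau_\mu\otimes D_\nu$''---does not work. Counting only shows the two sets of isomorphism classes have the same cardinality; it gives no bijection. The paper pins down the bijection by an induction on $h(\tau_\lambda)$, using $\mathrm{Res}_b$ to a parabolic $S_{qm}\times S_p$ together with Proposition~\ref{resmonglobal} to compare monodromies before and after restriction, and bootstrapping from the already-established $p=0$ and $q=0$ cases. Note also that you invoke Theorem~\ref{suppSnpart} here, but in the paper that theorem is proved \emph{simultaneously} with Theorem~\ref{rsqrsq+1}; only the inclusion $\supp L(\tau_\lambda)\subseteq X_{q_m(\lambda)}$ (Theorem~\ref{suppSnpartle}) is available in advance.

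A smaller omission: besides the $S_p$-hyperplanes and the block-swap $S_q$-hyperplanes you discuss, the stratum $\hr'$ has a third family of boundary components, namely $z_i=t_a$ where a block collides with a loose point (stabilizer $S_m^{q-1}\times S_{m+1}$). These generate the kernel of $\pi_1(\hr'/S_q\times S_p)\twoheadrightarrow B_q\times B_p$, so you must show their monodromy is trivial; the paper does this by reducing to the unique minimally supported irreducible of $H_c(S_{m+1},\C^{m+1})$.
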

\subsection{Outline of the Paper}
The paper is organised as follows. In Section \ref{Sec:coh}, after some algebraic geometry preliminaries we recall the definition of the rational Cherednik algebra of a variety, and prove Theorem \ref{coh}. In Section 
\ref{Sec:linear} we state some known results about the representation theory for linear actions, and in particular for $H_{\frac rm}(S_n,\C^n)$. From this we prove Proposition \ref{RS} and deduce one direction of Theorem 
\ref{suppSnpart}. We restrict to the case $c=\frac1m$ in Section \ref{Sec:minsupp}, and construct an explicit equivalence from the category of minimally supported representations. This enables us, in Section \ref{Sec:mon}, 
to prove Theorem \ref{rsqrsq+1} for $c=\frac1m$. From this we deduce Theorems \ref{rsqrsq+1} and \ref{suppSnpart} in general.
\subsection{Acknowledgements}
The author thanks Pavel Etingof for many helpful suggestions and insights, Ivan Losev for useful discussions concerning Theorem \ref{twosided}, and Dennis Gaitsgory for explanations about the theory of $D$-modules.
\section{Coherent Representations}\label{Sec:coh}
Suppose $X$ is a smooth algebraic variety over $\C$ and $W$ a finite group acting on $X$, such that the set of points with trivial stabiliser is dense in $X$. Let $\omega$ be a $W$-invariant closed 2-form on $X$. Suppose for the 
moment that $X$ is affine. In order to define the rational Cherednik algebra $\cH_\co(W,X)$, we require the following lemma, which is shown in Section 2.4 of \cite{EtingofVariety}.
\begin{lemm}
Suppose $Z\subseteq X$ is a smooth closed subscheme of codimension $1$. Let $\cO(X)$ denote the ring of regular functions on $X$, and $\cO(X)\langle Z\rangle$ the space of rational functions on $X$ whose only pole is along $Z$, 
with order at most $1$. There is a natural $\cO(X)$-module homomorphism $\xi_Z:TX\rightarrow\cO(X)\langle Z\rangle/\cO(X)$ whose kernel consists of all vector fields preserving the ideal sheaf of $Z$.
\end{lemm}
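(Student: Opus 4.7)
The plan is to define $\xi_Z$ locally by a logarithmic derivative and show the answer is independent, modulo $\cO_X$, of the choice of local equation for $Z$. Since $Z$ is a smooth divisor, its ideal sheaf $\cI_Z$ is an invertible $\cO_X$-module, so I can choose an affine open cover $\{U_\alpha\}$ of $X$ on which $\cI_Z|_{U_\alpha}$ is principal, generated by $f_\alpha\in\cO(U_\alpha)$. For a local vector field $v$ on $U_\alpha$ I will set $\xi_Z^\alpha(v)=v(f_\alpha)/f_\alpha$; since $f_\alpha$ vanishes to order exactly one along $Z\cap U_\alpha$ and $v(f_\alpha)$ is regular, this lies in $\cO(U_\alpha)\langle Z\cap U_\alpha\rangle$.

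On an overlap $U_\alpha\cap U_\beta$ one has $f_\alpha = u f_\beta$ for some unit $u\in\cO(U_\alpha\cap U_\beta)^\times$, and a direct computation gives
\[
\frac{v(f_\alpha)}{f_\alpha}-\frac{v(f_\beta)}{f_\beta}=\frac{v(u)}{u},
\]
which is regular. Hence the $\xi_Z^\alpha$ agree modulo $\cO_X$ on overlaps and patch to an $\cO_X$-module morphism of sheaves $\cT X\to\cO_X\langle Z\rangle/\cO_X$; the $\cO_X$-linearity is immediate from $(gv)(f)/f=g\cdot v(f)/f$. Taking global sections on the affine variety $X$, and using $H^1(X,\cO_X)=0$ to identify $\Gamma\bigl(X,\cO_X\langle Z\rangle/\cO_X\bigr)$ with $\cO(X)\langle Z\rangle/\cO(X)$, yields the desired $\cO(X)$-linear map $\xi_Z$. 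For the kernel, $\xi_Z(v)=0$ if and only if each $v(f_\alpha)/f_\alpha$ is regular, i.e.\ $v(f_\alpha)\in f_\alpha\cO(U_\alpha)=\cI_Z(U_\alpha)$ for every $\alpha$; since the $f_\alpha$ generate $\cI_Z$ locally and a derivation preserves an ideal iff it sends local generators into the ideal, this is exactly the condition that $v$ preserves the ideal sheaf of $Z$.

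The argument is in the end quite direct; the one point that requires genuine checking is the independence-of-generator computation via the logarithmic-derivative identity displayed above, after which both the gluing and the description of the kernel are routine. The only mild subtlety is that $\cI_Z$ need not be globally trivial on the affine $X$ (Picard group obstruction), which is precisely why one must mod out by $\cO_X$ in the target rather than attempting to define $v(f)/f$ with a single global $f$.
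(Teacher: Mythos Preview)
Your proof is correct and is precisely the standard construction: the paper does not actually prove this lemma but cites Section 2.4 of \cite{EtingofVariety}, and the formula $\xi_Z(v)\equiv v(f)/f$ you derive is exactly the one the paper uses implicitly later (see the description of $\hat\zeta_{Z'}$ in the proof of Proposition \ref{linearH}). Your handling of the gluing via the logarithmic-derivative identity and the cohomological identification of global sections on the affine $X$ are both fine.
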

Since $TX$ is a projective $\cO(X)$-module, we may lift $\xi_Z$ along the surjection
\[
	\cO(X)\langle Z\rangle\twoheadrightarrow\cO(X)\langle Z\rangle/\cO(X)
\]
to an $\cO(X)$-module homomorphism $\zeta_Z:TX\rightarrow\cO(X)\langle Z\rangle$. It is known (and follows from Proposition \ref{linear} below) that $X^{W'}$ is a smooth closed subscheme for any subset $W'\subseteq W$.
\begin{defn}[Definitions 2.7 and 2.8 of \cite{EtingofVariety}]\label{HWX}
Let $S$ denote the set of pairs $(Z,s)$, where $s\in W$ and $Z$ is an irreducible component of $X^s$ of codimension $1$ in $X$. Let $c:S\rightarrow\C$ be a $W$-invariant function. Let $X_{\rm reg}$ denote the set of points 
in $X$ with trivial stabiliser in $W$, and
\[
	D_\omega(X_{\rm reg})=\Gamma(X_{\rm reg},\mathcal D_\omega(X))
\]
the algebra of global algebraic twisted differential operators on the smooth scheme $X_{\rm reg}$. For each vector field $v$ on $X$, we define the Dunkl-Opdam operator $D_v\in\C[W]\ltimes D_\omega(X_{\rm reg})$ by
\[
	D_v=v+\sum_{(Z,s)\in S}\frac{2c(Z,s)}{1-\lambda_{Z,s}}\zeta_Z(v)(s-1),
\]
where $\lambda_{Z,s}$ is the determinant of $s$ on $T_xX^*$ for any $x\in Z$. The rational Cherednik algebra $H_\co(W,X)$ is the unital $\C$-subalgebra of $\C[W]\ltimes D_\omega(X_{\rm reg})$ generated by $\C[W]\ltimes\cO(X)$ 
and the $D_v$.
\end{defn}
\textbf{Remarks:}
\begin{enumerate}
\item
	Although $D_v$ depends on the choice of lift $\zeta_Z$, the algebra $H_\co(W,X)$ does not.
\item
	Proposition \ref{local} below shows that this algebra behaves well with respect to \'etale morphisms. Moreover if $\alpha$ is any $W$-invariant 1-form on $X$, the isomorphism
	\[
		\C[W]\ltimes D_\omega(X_{\rm reg})\cong\C[W]\ltimes D_{\omega+d\alpha}(X_{\rm reg})
	\]
	identifies $H_\co(W,X)$ with $H_{c,\omega+d\alpha}(W,X)$. Therefore if we do not assume $X$ is affine, and we take a $W$-invariant class 
	$\psi\in H^2(X,\Omega_X^{\geq1})^W$ rather than a global 2-form, we may patch algebras of the above form to construct a sheaf of algebras $\cH_\cco(W,X)$ on $X/W$. Nevertheless, for the moment we will continue to 
	assume $X$ is affine and $\omega$ is a specified 2-form.
\end{enumerate}
\begin{prop}\label{local}
Suppose $p:U\rightarrow X$ is a $W$-equivariant \'etale morphism, with $U$ affine. For each component $Z'$ of $U^s$ of codimension $1$, the image of $Z'$ is a component $Z$ of $X^s$ of codimension $1$, and 
we set $c'(Z',s)=c(Z,s)$. There is a natural $\cO(U)$-module isomorphism $\cO(U)\otimes_{\cO(X)}H_\co(W,X)\isom H_{c',p^*\omega}(W,U)$, whose composition with 
$H_\co(W,X)\rightarrow\cO(U)\otimes_{\cO(X)}H_\co(W,X)$ is an algebra homomorphism. Moreover given any $H_\co(W,X)$-module $M$, there is a natural action of $H_{c',p^*\omega}(W,U)$ on $\cO(U)\otimes_{\cO(X)}M$.
\end{prop}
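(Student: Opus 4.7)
The plan is to realize both algebras inside a common ambient algebra and check that the étale base change from $X$ to $U$ identifies them, along with the Dunkl--Opdam operators. Concretely, pick $W$-invariant lifts $\zeta_Z$ on $X$ and $\zeta_{Z'}$ on $U$ for each codimension-$1$ reflection hypersurface. First I would verify the geometric preliminaries: since $p$ is étale and $W$-equivariant, $U^s = p^{-1}(X^s)$ as schemes, so every codimension-$1$ component $Z'\subseteq U^s$ maps onto a codimension-$1$ component $Z\subseteq X^s$ with $\lambda_{Z',s}=\lambda_{Z,s}$, making $c'$ well-defined. Also $U_{\rm reg}\supseteq p^{-1}(X_{\rm reg})$, and the standard étale base-change identification $TU=\cO(U)\otimes_{\cO(X)}TX$ extends to a natural map of TDO algebras
\[
\cO(U)\otimes_{\cO(X)}\bigl(\C[W]\ltimes D_\omega(X_{\rm reg})\bigr)\longrightarrow\C[W]\ltimes D_{p^*\omega}(U_{\rm reg}),
\]
which is faithful on the left factor because $\cO(X)\to\cO(U)$ is flat.

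Restricting this map to the subalgebra $H_\co(W,X)\subseteq\C[W]\ltimes D_\omega(X_{\rm reg})$ gives a homomorphism $H_\co(W,X)\to\C[W]\ltimes D_{p^*\omega}(U_{\rm reg})$ that extends $\cO(U)$-linearly to
\[
\Phi:\cO(U)\otimes_{\cO(X)}H_\co(W,X)\longrightarrow\C[W]\ltimes D_{p^*\omega}(U_{\rm reg}).
\]
The key technical step is verifying that $\Phi$ lands in $H_{c',p^*\omega}(W,U)$ and generates it. For $v\in TX$ with pullback $p^*v\in TU$, one has
\[
\Phi(1\otimes D_v)=p^*v+\sum_{(Z,s)\in S}\frac{2c(Z,s)}{1-\lambda_{Z,s}}\,p^*\zeta_Z(v)\,(s-1).
\]
For fixed $s$, the poles of $p^*\zeta_Z(v)$ lie along $p^{-1}(Z)$, whose codimension-$1$ components are precisely the $Z'\subseteq U^s$ with $p(Z')=Z$. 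Using that each $\zeta_{Z'}(p^*v)$ captures the local behaviour of $p^*v$ along $Z'$, the difference $p^*\zeta_Z(v)-\sum_{Z':p(Z')=Z}\zeta_{Z'}(p^*v)$ lies in $\cO(U)$. Thus
\[
\Phi(1\otimes D_v)=D_{p^*v}+\alpha_v\quad\text{with}\quad\alpha_v\in\C[W]\ltimes\cO(U),
\]
showing the image sits in $H_{c',p^*\omega}(W,U)$. For surjectivity, the image contains $\cO(U)$ and $\C[W]$; since $v\mapsto D_v$ is $\cO$-linear on both sides and $TU=\cO(U)\otimes_{\cO(X)}TX$, the image also contains every $D_w$ with $w\in TU$, which covers all generators.

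Injectivity of $\Phi$ I would deduce from the PBW theorem of \cite{EtingofVariety}, which identifies both sides as free modules over $\cO(X)$ and $\cO(U)$ respectively with basis indexed by $W\times\mathrm{Sym}(TX)$ and $W\times\mathrm{Sym}(TU)$; flatness of $\cO(X)\to\cO(U)$ together with $TU=\cO(U)\otimes_{\cO(X)}TX$ then matches the two PBW bases and forces $\Phi$ to be an isomorphism. Finally, for the module statement, the composite $H_\co(W,X)\to\cO(U)\otimes_{\cO(X)}H_\co(W,X)\isom H_{c',p^*\omega}(W,U)$ is an algebra homomorphism by construction, so given a left $H_\co(W,X)$-module $M$ one simply sets
\[
\cO(U)\otimes_{\cO(X)}M\;\cong\;H_{c',p^*\omega}(W,U)\otimes_{H_\co(W,X)}M,
\]
which carries the desired left $H_{c',p^*\omega}(W,U)$-action. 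The main obstacle is the bookkeeping in the third step: tracking how the pole structure of a single $\zeta_Z$ on $X$ decomposes along the possibly several preimage components $Z'\subseteq p^{-1}(Z)$, and ensuring that the resulting discrepancies $\alpha_v$ are regular on $U$ and independent of the auxiliary choices of lifts, so that the resulting subalgebra is genuinely canonical.
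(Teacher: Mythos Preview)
Your proposal is correct and follows essentially the same route as the paper: embed both algebras in $\C[W]\ltimes D_{p^*\omega}(U_{\rm reg})$ via the \'etale base-change map, match Dunkl--Opdam operators, and read off the module action. The only tactical differences are that the paper chooses the lifts $\zeta_{Z'}$ on $U$ compatibly with the $\zeta_Z$ on $X$ so that your discrepancy $\alpha_v$ vanishes outright, obtains injectivity directly from flatness of $\cO(U)$ over $\cO(X)$ (rather than invoking PBW), and proves surjectivity by a short spanning argument using $[D_v,f]\in\C[W]\ltimes\cO(U)$ to write every element of $H_{c',p^*\omega}(W,U)$ as an $\cO(U)$-combination of ordered monomials $wD_{p^*v_1}\cdots D_{p^*v_k}$ with $v_i\in TX$.
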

\begin{proof}
Since $\cO(U)$ is flat over $\cO(X)$, the inclusion $H_\co(W,X)\subseteq\C[W]\ltimes D_\omega(X_{\rm reg})$ induces an $\cO(U)$-module monomorphism
\begin{eqnarray*}
	i:\cO(U)\otimes_{\cO(X)}H_\co(W,X)
		&\hookrightarrow&\cO(U)\otimes_{\cO(X)}\C[W]\ltimes D_\omega(X_{\rm reg})\\
		&=&\C[W]\ltimes D_{p^*\omega}(U_{\rm reg}).
\end{eqnarray*}
Moreover, for an appropriate choice of the lifts $\zeta_Z$, for any $v\in TX$ the image of $D_v\in H_\co(W,X)$ under $i$ is $D_{p^*v}$. Now $H_{c',p^*\omega}(W,U)$ is the subalgebra of $\C[W]\ltimes D_{p^*\omega}(U_{\rm reg})$ 
generated by $\C[W]\ltimes\cO(U)$ and $\{D_v\mid v\in TU\}$. But
\[
	TU=\cO(U)\otimes_{\cO(X)}TX,
\]
and
\[
	[D_v,f]=v(f)+\sum_{(Z,s)\in S}\frac{2c(Z,s)}{1-\lambda_{Z,s}}\zeta_Z(v)(\act f-f)s\in\C[W]\ltimes\cO(U)
\]
for any $v\in TU$ and $f\in\cO(U)$. It follows that $H_{c',p^*\omega}(W,U)$ is spanned by elements of the form
\[
	fwD_{p^*v_1}D_{p^*v_2}\ldots D_{p^*v_k}
\]
for $f\in\cO(U)$, $w\in W$ and $v_i\in TX$. Applying the same argument with $p$ equal to the identity on $X$, we see that $H_\co(W,X)$ is spanned by
\[
	fwD_{v_1}D_{v_2}\ldots D_{v_k}
\]
for $f\in\cO(X)$, $w\in W$ and $v_i\in TX$. Thus the image of $i$ is exactly $H_{c',p^*\omega}(W,U)$, giving the required isomorphism $j:\cO(U)\otimes_{\cO(X)}H_\co(W,X)\isom H_{c',p^*\omega}(W,U)$. 
Note that the composition of $i$ with $H_\co(W,X)\rightarrow\cO(U)\otimes_{\cO(X)}H_\co(W,X)$ equals the composite
\[
	H_\co(W,X)\subseteq\C[W]\ltimes D_\omega(X_{\rm reg})\rightarrow\C[W]\ltimes D_{p^*\omega}(U_{\rm reg}),
\]
which is an algebra homomorphism. In particular, $j$ also preserves the right $\cO(X)$-module structure.

Now suppose $M$ is an $H_\co(W,X)$-module. The multiplication map
\[
	H_{c',p^*\omega}(W,U)\otimes_\C\cO(U)\rightarrow H_{c',p^*\omega}(W,U)
\]
and the action map $H_\co(W,X)\otimes_{\cO(X)}M\rightarrow M$ give rise to a map
\begin{eqnarray*}
	H_{c',p^*\omega}(W,U)\otimes_\C\cO(U)\otimes_{\cO(X)}M\hspace{-20mm}\\
		&\rightarrow&H_{c',p^*\omega}(W,U)\otimes_{\cO(X)}M\\
		&\cong&\cO(U)\otimes_{\cO(X)}H_\co(W,X)\otimes_{\cO(X)}M\\
		&\rightarrow&\cO(U)\otimes_{\cO(X)}M.
\end{eqnarray*}
It is straightforward to check that this defines an action.
\end{proof}
As in \cite{BE}, we will study representations of $\cH_\co(W,X)$ by restricting to the formal neighbourhood of a point, or more generally a closed subset.
\begin{prop}\label{formal}
Suppose $Z$ is a $W$-invariant closed subset of $X$, and let $I\subseteq\cO(X)$ denote the ideal vanishing on $Z$. Consider the coordinate ring of the ``formal neighbourhood" of $Z$,
\[
	\hO_{X,Z}=\lim_{\leftarrow\atop k}\cO(X)/I^k.
\]
There is a natural algebra structure on $\hO_{X,Z}\otimes_{\cO(X)}H_\co(W,X)$, and this algebra acts naturally on $\hO_{X,Z}\otimes_{\cO(X)}M$ for any $M$ in $H_\co(W,X)\coh$.
\end{prop}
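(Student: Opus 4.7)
The plan is to mimic the proof of Proposition \ref{local}, with the completion $\hat\cO_{X,Z}$ playing the role of $\cO(U)$. The first ingredient is flatness: since $\cO(X)$ is Noetherian and we are completing at the ideal $I$ of $Z$, $\hat\cO_{X,Z}$ is flat over $\cO(X)$. Tensoring the embedding $H_\co(W,X)\subseteq\C[W]\ltimes D_\omega(X_{\rm reg})$ then produces an injection
\[
	\hat{\imath}:\hat\cO_{X,Z}\otimes_{\cO(X)}H_\co(W,X)\hookrightarrow\hat\cO_{X,Z}\otimes_{\cO(X)}\bigl(\C[W]\ltimes D_\omega(X_{\rm reg})\bigr).
\]
The target carries a natural algebra structure, since $W$ acts on $\hat\cO_{X,Z}$ (as $Z$ is $W$-invariant), every vector field on $X$ extends to a continuous derivation of $\hat\cO_{X,Z}$, and the defining relations of $\C[W]\ltimes D_\omega(X_{\rm reg})$ remain valid after extension of scalars; flatness guarantees no relations collapse.

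Next I would show that the image of $\hat\imath$ is closed under multiplication, so that it inherits an algebra structure. As in Proposition \ref{local} the only nontrivial verification is that, for $v\in TX$ and $f\in\hat\cO_{X,Z}$, the commutator
\[
	[D_v,f]=v(f)+\sum_{(Z',s)\in S}\frac{2c(Z',s)}{1-\lambda_{Z',s}}\zeta_{Z'}(v)(\act f-f)\,s
\]
lies in $\hat\cO_{X,Z}\otimes_{\cO(X)}\C[W]$. The term $v(f)$ is unproblematic by continuity of $v$. For each summand, if $Z'$ is disjoint from $Z$ then $\zeta_{Z'}(v)$ is regular in a neighbourhood of $Z$ and there is nothing to do; otherwise $Z\subseteq Z'$ (since $Z$ is $W$-invariant and $Z'$ is a component of $X^s$, once they meet they must nest), and the potential pole of $\zeta_{Z'}(v)$ along $Z'$ is cancelled by the fact that $\act f-f$ vanishes on $Z'$. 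This last assertion is the technical heart of the argument: since $s$ acts trivially on $Z'$ and the local equation $\alpha_{Z'}$ generates the ideal of $Z'$ in the regular local ring along $Z'$, the difference $\act f-f$ is divisible by $\alpha_{Z'}$ in $\hat\cO_{X,Z}$, and the quotient remains in the completion by an induction on the $I$-adic filtration.

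Once the algebra structure is in place, the action on $\hat\cO_{X,Z}\otimes_{\cO(X)}M$ for $M\in H_\co(W,X)\coh$ is constructed exactly as in the last paragraph of the proof of Proposition \ref{local}: combine the multiplication map $\bigl(\hat\cO_{X,Z}\otimes H_\co\bigr)\otimes\hat\cO_{X,Z}\to\hat\cO_{X,Z}\otimes H_\co$ with the action map $H_\co\otimes_{\cO(X)}M\to M$, using flatness to identify $\hat\cO_{X,Z}\otimes_{\cO(X)}H_\co\otimes_{\cO(X)}M\cong\bigl(\hat\cO_{X,Z}\otimes_{\cO(X)}H_\co\bigr)\otimes_{\hat\cO_{X,Z}}\bigl(\hat\cO_{X,Z}\otimes_{\cO(X)}M\bigr)$. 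The associativity and unit axioms follow from those of the $H_\co$-action on $M$.

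The main obstacle is the divisibility claim in the middle paragraph. The relevant computation is local around a generic point of $Z'\cap Z$, where $Z'$ is a smooth divisor, so $\hat\cO_{X,Z}$ sits inside the completed local ring along $Z'$, and one verifies divisibility modulo $I^k$ by induction on $k$: if $f\in I^k$ then $\act f-f\in I^k$ as well because $I$ is $W$-invariant, and the leading symbol vanishes on $Z'$, producing an element of $I^k$ that is divisible by $\alpha_{Z'}$ modulo $I^{k+1}$. Assembling these truncations yields the required element of $\hat\cO_{X,Z}$.
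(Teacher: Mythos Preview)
Your approach differs from the paper's and contains a concrete error. The claim ``otherwise $Z\subseteq Z'$ (since $Z$ is $W$-invariant and $Z'$ is a component of $X^s$, once they meet they must nest)'' is false: $Z$ is an \emph{arbitrary} $W$-invariant closed subset, and there is no reason it should be contained in a reflection divisor $Z'$ just because they intersect. For instance, with $W=S_2$ acting on $\C^2$, take $Z$ to be the diagonal together with the orbit $\{(1,2),(2,1)\}$; this is $W$-invariant and meets the diagonal $Z'$ without being contained in it. Your final paragraph then leans on this nesting (``$\hO_{X,Z}$ sits inside the completed local ring along $Z'$''), so as written the divisibility argument does not go through.

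The good news is that the dichotomy is unnecessary. What you actually need is that for every $(Z',s)\in S$ and every $f\in\hO_{X,Z}$, the product $\zeta_{Z'}(v)(\act f-f)$ lies in $\hO_{X,Z}$. This follows uniformly: for $g\in\cO(X)$ one has $\act g-g\in I_{Z'}$, so modulo $I^k$ the element $\act f-f$ lies in $(I_{Z'}+I^k)/I^k$, hence $\act f-f\in I_{Z'}\hO_{X,Z}$ by Artin--Rees; and $\zeta_{Z'}(v)\cdot I_{Z'}\subseteq\cO(X)$ by the definition of $\cO(X)\langle Z'\rangle$. No case analysis on $Z\cap Z'$ is required.

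For comparison, the paper avoids the ambient algebra entirely. It introduces the order filtration $H_\co^{\leq d}$ on $H_\co(W,X)$, observes that each $H_\co^{\leq d}$ is finitely generated over $\cO(X)$, and uses only the fact that $[D_v,\cO(X)]\subseteq\C[W]\ltimes\cO(X)$ (a statement about $H_\co$ itself, with $f\in\cO(X)$ rather than $f\in\hO_{X,Z}$) to deduce $H_\co^{\leq d}I^{k+d}\subseteq I^kH_\co^{\leq d}$. Multiplication and the module action then descend to inverse limits of the finite quotients $\cO(X)/I^k\otimes H_\co^{\leq d}$. This is shorter precisely because it never needs to extend the commutator formula to completed functions or to justify an algebra structure on the larger object $\hO_{X,Z}\otimes_{\cO(X)}\bigl(\C[W]\ltimes D_\omega(X_{\rm reg})\bigr)$; your sentence ``the target carries a natural algebra structure'' is true but, when unpacked, amounts to running the paper's filtration argument on $D_\omega$ instead of on $H_\co$.
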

\begin{proof}
We define an algebra filtration $H_\co^{\leq d}$ of $H_\co=H_\co(W,X)$ as follows. Let $H_\co^{\leq0}=\C[W]\ltimes\cO(X)$ and
\[
	H_\co^{\leq1}=H_\co^{\leq0}+\C[W]\{D_v\mid v\in TX\}.
\]
This is independent of the choice of lift $\zeta_Z$, since different choices of $D_v$ differ by elements of $\C[W]\ltimes\cO(X)$. Finally let
\[
	H_\co^{\leq d}=\left(H_\co^{\leq1}\right)^d.
\]
As in the previous proof, if $v_1,\ldots,v_m$ generate $TX$ over $\cO(X)$, then $H_\co^{\leq d}$ is generated over $\cO(X)$ by
\[
	wD_{v_{i_1}}\ldots D_{v_{i_k}}
\]
for $w\in W$ and $k\leq d$. In particular, $H_\co^{\leq d}$ is a finitely generated left $\cO(X)$-module.

Now $I$ is $W$-invariant, so inside $H_\co$ we have
\[
	(\C[W]\ltimes\cO(X))I=I(\C[W]\ltimes\cO(X))
\]
and
\[
	[D_v,I]\subseteq[D_v,\cO(X)]\subseteq\C[W]\ltimes\cO(X).
\]
It follows by induction on $k$ that $H_\co^{\leq1}I^{k+1}\subseteq I^kH_\co^{\leq1}$, so that $H_\co^{\leq d}I^{k+d}\subseteq I^kH_\co^{\leq d}$ for all $d,\,k\geq0$. The multiplication map
\[
	H_\co^{\leq d}\otimes H_\co^{\leq e}\rightarrow H_\co^{\leq d+e}
\]
therefore naturally induces a map
\[
	H_\co^{\leq d}\otimes\left(\cO(X)/I^{d+k}\otimes_{\cO(X)}H_\co^{\leq e}\right)\rightarrow\cO(X)/I^k\otimes_{\cO(X)}H_\co^{\leq d+e}.
\]
Taking inverse limits we obtain $\hat H_\co^{\leq d}\otimes\hat H_\co^{\leq e}\rightarrow\hat H_\co^{\leq d+e}$, where
\[
	\hat H_\co^{\leq d}=\lim_{\leftarrow\atop k}\cO(X)/I^k\otimes_{\cO(X)}H_\co^{\leq d}=\hO_{X,Z}\otimes_{\cO(X)}H_\co^{\leq d}.
\]
In this way the space
\[
	\hat H_\co=\bigcup_{d\geq0}\hat H_\co^{\leq d}=\hO_{X,Z}\otimes_{\cO(X)}H_\co
\]
becomes an associative algebra. Moreover for any $M\in H_\co\coh$, the action map induces
\[
	H_\co^{\leq d}\otimes\left(\cO(X)/I^{d+k}\otimes_{\cO(X)}M\right)\rightarrow\cO(X)/I^k\otimes_{\cO(X)}M,
\]
and taking inverse limit gives an action of $\hat H_\co$ on $\hO_{X,Z}\otimes_{\cO(X)}M$.
\end{proof}
Next we show that the action of $W$ on $X$ looks, on the formal neighbourhood of the fixed point set, like a linear action.
\begin{prop}\label{linear}
Suppose $Z\subseteq X$ is a smooth closed subset which is fixed pointwise by the action of $W$. Then every $x\in Z$ admits an affine open 
$W$-invariant neighbourhood $U\subseteq X$ such that there is a $W$-equivariant ring isomorphism $\phi$ making the following diagram commute:
\[\xymatrix{
	\hO_{(U\cap Z)\times(T_xX/T_xZ),(U\cap Z)\times\{0\}}\ar[r]^-{\phi}_-{\sim}\ar@{->>}[dr]&\hO_{U,U\cap Z}\ar@{->>}[d]\\
	&\cO(U\cap Z).
}\]
Here $W$ acts on the first ring according to its linear action on $T_xX/T_xZ$.
\end{prop}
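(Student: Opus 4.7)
The plan is to build $\phi$ by combining two ingredients: a $W$-equivariant $\C$-algebra section $s:\cO(U\cap Z)\to\hO_{U,U\cap Z}$ of the augmentation, and a $W$-equivariant $\C$-linear map $\eta:(T_xX/T_xZ)^*\to I$ lifting a trivialization of the conormal sheaf $I/I^2$, where $I\subseteq\cO(U)$ denotes the ideal of $U\cap Z$. Sending the $\cO(U\cap Z)$ factor via $s$ and the generators of the symmetric algebra via $\eta$ produces a $W$-equivariant $\C$-algebra homomorphism $\cO(U\cap Z)\otimes\mathrm{Sym}((T_xX/T_xZ)^*)\to\hO_{U,U\cap Z}$; since $\eta$ takes values in $I$, this extends to the formal completion, and the commutativity of the triangle in the statement is immediate from $\eta$ landing in $I$ and $s$ being a section.

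To produce $U$, I take any affine open of $X$ containing $x$ and intersect its $W$-translates (separatedness ensures the result is affine). For the equivariant trivialization of $I/I^2$, note that this is a $W$-equivariant locally free $\cO(U\cap Z)$-module whose fibre over $x$ is canonically $(T_xX/T_xZ)^*$ as a $W$-module. I choose any $\cO(U\cap Z)$-linear map $\cO(U\cap Z)\otimes_\C(T_xX/T_xZ)^*\to I/I^2$ reducing to the identity at the fibre over $x$, and then replace it by its $W$-average; the average is still the identity on that fibre, hence an isomorphism on some $W$-invariant affine open neighbourhood of $x$, to which I shrink $U$. Lifting the resulting $\C$-linear map $(T_xX/T_xZ)^*\to I/I^2$ through the surjection $I\twoheadrightarrow I/I^2$ and averaging once more produces $\eta$.

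I construct $s$ by induction, producing $W$-equivariant $\C$-algebra sections $s_n:\cO(U\cap Z)\to\cO(U)/I^{n+1}$ and taking the inverse limit. The base case $s_0$ is canonical; at the induction step a $\C$-algebra lift exists by formal smoothness of the smooth affine scheme $U\cap Z$, and the set of lifts is a torsor under $\Der_\C(\cO(U\cap Z),I^n/I^{n+1})$ on which $W$ acts. Since $|W|$ is invertible in $\C$, the associated cohomology class in $H^1(W,-)$ vanishes, yielding a $W$-equivariant lift $s_n$. To verify $\phi$ is an isomorphism I filter both sides by powers of the ideal of $U\cap Z$: the associated graded $\gr\phi$ is the identity in degree zero and the chosen trivialization in degree one, hence an isomorphism of graded algebras, and completeness transfers this to $\phi$ itself. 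The principal obstacle is the construction of $s$, since simply averaging arbitrary algebra lifts does not produce an algebra map; equivariance must instead be arranged through the torsor structure on the set of lifts, leveraging the vanishing of $H^1$ for finite groups in characteristic zero.
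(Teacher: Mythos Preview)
Your proof is correct and follows the same overall architecture as the paper's: pick $W$-equivariant normal coordinates (your $\eta$, the paper's functions $f_j$ obtained via Maschke), build a compatible family of $W$-equivariant algebra sections $\cO(U\cap Z)\to\cO(U)/I^{k+1}$, assemble $\phi$, and verify bijectivity on the associated graded (implicitly using that a smooth subvariety of a smooth variety is regularly embedded, so $I^k/I^{k+1}\cong\mathrm{Sym}^k_{\cO_Z}(I/I^2)$---worth stating explicitly).

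The one genuine difference is how $W$-equivariance of the section is arranged. You lift via formal smoothness and then correct using the torsor structure and vanishing of $H^1(W,-)$. The paper instead exploits that $W$ acts trivially on $\cO(U\cap Z)$, so a $W$-equivariant section is the same thing as a section landing in the invariants $(\cO(U)/I^{k+1})^W$; since taking $W$-invariants is exact in characteristic zero, one can simply work inside invariants throughout and lift there directly, bypassing the cohomological step. The paper also unwinds formal smoothness by hand: it chooses a polynomial presentation $A\twoheadrightarrow\cO(U)$, lifts the generators arbitrarily, and corrects using the fact that the resulting error $\p\to I^k/I^{k+1}$ factors through the cotangent module of $Z$ (projectivity of $\Omega_{Z/\C}$ plays the role of your appeal to formal smoothness). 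Your packaging is cleaner and more conceptual; the paper's is more explicit and self-contained.
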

\begin{proof}
Both rings are inverse limits, so it suffices to construct compatible $W$-equivariant ring isomorphisms
\[
	\phi_k:\Gamma(X,\cO_X/\cI)\otimes\C[T_xX/T_xZ]/\m^k\isom\Gamma(X,\cO_X/\cI^k),
\]
such that $\phi_1$ is the identity, where $\m\subseteq\C[T_xX/T_xZ]$ is the ideal corresponding to the origin and $\cI\subseteq\cO_X$ is the ideal sheaf vanishing on $Z$.

Let $T_xZ^\perp$ denote the subspace of $T_xX^*$ vanishing on $T_xZ$. This is the image of $\Gamma(X,\cI)$ under the gradient map $\Gamma(X,\cI)\rightarrow T_xX^*$. Let $a_1,\ldots,a_n$ be a basis for $T_xX$, and 
$b_1,\ldots,b_r$ a basis for $T_xZ^\perp$, such that $\langle a_i,b_j\rangle=\delta_{ij}$ for $1\leq i\leq n$ and $1\leq j\leq r$. Since $W$ is finite and acts linearly on $\Gamma(X,\cI)$ and $T_xZ^\perp$, and we are working over 
characteristic zero, Maschke's theorem implies the existence of a $W$-equivariant $\C$-linear map $\beta:T_xZ^\perp\rightarrow \Gamma(X,\cI)$ which is right inverse to the surjection $\Gamma(X,\cI)\twoheadrightarrow T_xZ^\perp$. 
Let $f_j\in \Gamma(X,\cI)$ be the image of $b_j$. Also choose $v_i\in TX$ mapping to $a_i\in T_xX$. Let $U$ denote the open neighbourhood of $x$ on which the matrix $(v_i(f_j))_{1\leq i,j\leq r}$ is invertible. The functions 
$f_1,\ldots,f_r$ have linearly independent gradients on $U$, so their zero set $Z'\subseteq U$ has codimension $r$. However $Z'\supseteq Z\cap U$ since $f_j\in \Gamma(X,\cI)$, and the dimension $r$ of $T_xZ^\perp$ equals the 
codimension in $X$ of the component of $Z$ containing $x$. Therefore $Z$ coincides with $Z'$ on some neighbourhood of $x$. By shrinking $U$, we may suppose that $U$ is an affine, $W$-invariant open neighbourhood of $x$ such that 
$\Gamma(U,\cI)$ is generated by the $f_j$. We now inductively construct ring homomorphisms
\[
	\gamma_k:\Gamma(U,\cO_X/\cI)\rightarrow\Gamma(U,\cO_X/\cI^k)^W
\]
for $k\geq1$, compatible with the projections $\Gamma(U,\cO_X/\cI^{k+1})^W\twoheadrightarrow\Gamma(U,\cO_X/\cI^k)^W$, 
and such that $\gamma_1$ is the identity (note that $W$ acts trivially on $\Gamma(U,\cO_X/\cI)=\Gamma(U\cap Z,\cO_Z)$, since $Z$ is fixed pointwise by $W$). Suppose we have $\gamma_k$, where $k\geq1$. Let $A=\C[x_1,\ldots,x_m]$ be 
a polynomial ring mapping surjectively to $\Gamma(U,\cO_X)$, and let $\p\subseteq A$ be the inverse image of the ideal $\Gamma(U,\cI)$. Note that $\Gamma(U,\cO_X/\cI)=A/\p$. Now choose $y_i\in\Gamma(U,\cO_X/\cI^{k+1})^W$ mapping to 
$\gamma_k(x_i)\in\Gamma(U,\cO_X/\cI^k)^W$. We have a ring homomorphism $\gamma':A\rightarrow\Gamma(U,\cO_X/\cI^{k+1})^W$ sending $x_i$ to $y_i$, and the composite with $\Gamma(U,\cO_X/\cI^{k+1})^W\rightarrow\Gamma(U,\cO_X/\cI^k)^W$ factors 
through $\gamma_k$. In particular, the composite kills $\p$, so $\gamma'(\p)\subseteq\Gamma(U,\cI^k/\cI^{k+1})^W$. Also the composite of $\gamma'$ with $\Gamma(U,\cO_X/\cI^{k+1})^W\rightarrow\Gamma(U,\cO_X/\cI)$ is the natural projection 
$A\rightarrow\Gamma(U,\cO_X/\cI)$. It follows that the restriction
\[
	\delta=\gamma'|_{\p}:\p\rightarrow\Gamma(U,\cI^k/\cI^{k+1})^W
\]
is an $A$-module homomorphism. Certainly then $\delta(\p^2)=0$. We have an exact sequence
\[
	0\rightarrow\p^2\rightarrow\p\rightarrow A/\p\otimes_AT(\Spec A)^*\rightarrow T(\Spec A/\p)^*,
\]
where the map $\p\rightarrow A/\p\otimes_AT(\Spec A)^*$ is the gradient map. Since
\[
	\Spec A/\p=U\cap Z
\]
is smooth, $T(\Spec A/\p)^*$ is a projective $A/\p$-module. Therefore $\delta$ factors through the gradient map. Moreover $T(\Spec A)^*$ is freely generated over $A$ by $dx_1,\ldots,dx_m$. Therefore we may find 
$z_1,\ldots,z_m\in\Gamma(U,\cI^k/\cI^{k+1})^W$ such that
\[
	\delta(f)=\sum_{i=1}^m\frac{\partial f}{\partial x_i}z_i.
\]
Let $\gamma'':A\rightarrow\Gamma(U,\cO_X/\cI^{k+1})^W$ be the ring homomorphism sending $x_i$ to $y_i-z_i$. Since $k\geq1$, we have
\[
	\gamma''(f)=\gamma'(f)-\sum_{i=1}^m\frac{\partial f}{\partial x_i}z_i.
\]
In particular, $\gamma''$ kills $\p$, so it induces the required map $\gamma_{k+1}:A/\p\rightarrow\Gamma(U,\cO_X/\cI^{k+1})^W$.

Now $\C[T_xX/T_xZ]$ is freely generated by $b_1,\ldots,b_r$, so we may extend $\gamma_k$ to a homomorphism
\[
	\phi_k':\Gamma(U,\cO_X/\cI)\otimes\C[T_xX/T_xZ]\rightarrow\Gamma(U,\cO_X/\cI^k)
\]
by sending $b_j$ to $f_j$. Note that the $\phi_k'$ are compatible as $k$ varies, and are $W$-equivariant since $\beta$ is. Since $f_j\in\Gamma(X,\cI)$ for each $j$, we have 
$\phi_k'(\Gamma(U,\cO_X/\cI)\otimes\m^l)\subseteq\Gamma(U,\cI^l/\cI^k)$ for $0\leq l\leq k$. Thus $\phi_k'$ induces a map
\[
	\phi_k:\Gamma(U,\cO_X/\cI)\otimes\C[T_xX/T_xZ]/\m^k\rightarrow\Gamma(U,\cO_X/\cI^k),
\]
and to prove $\phi_k$ is an isomorphism, it suffices to show that the induced maps
\[
	\Gamma(U,\cO_X/\cI)\otimes\m^l/\m^{l+1}\rightarrow\Gamma(U,\cI^l/\cI^{l+1})
\]
are isomorphisms, for $0\leq l<k$. We took $\gamma_1$ to be the identity, so in fact this is a map of $\Gamma(U,\cO_X/\cI)$-modules. That is, we are required to prove that $\Gamma(U,\cI^l/\cI^{l+1})$ is freely generated over 
$\Gamma(U,\cO_X/\cI)$ by degree $l$ monomials in the $f_j$. This is clear when $l=0$. Moreover the monomials generate $\Gamma(U,\cI^l)$ over $\Gamma(U,\cO_X)$, since the $f_j$ generate $\Gamma(U,\cI)$ over $\Gamma(U,\cO_X)$. 
Finally suppose
\[
	\sum_\alpha g_\alpha f_1^{\alpha_1}\ldots f_r^{\alpha_r}\in\Gamma(U,\cI^{l+1})
\]
for some $g_\alpha\in\Gamma(U,\cO_X)$, where each monomial has degree $l$. The matrix $(v_i(f_j))_{1\leq i,j\leq r}$ is invertible on $U$, so we may find vector fields $v_1',\ldots,v_r'$ on $U$ satisfying 
$v_i'(f_j)=\delta_{ij}$. Since $v_i'\Gamma(U,\cI^{l+1})\subseteq\Gamma(U,\cI^l)$, we conclude that
\[
	\sum_\alpha\alpha_ig_\alpha f_1^{\alpha_1}\ldots f_i^{\alpha_i-1}\ldots f_r^{\alpha_r}\in\Gamma(U,\cI^l)
\]
for each $1\leq i\leq r$. If $l>0$ then for each $\alpha$ we have $\alpha_i\neq0$ for some $i$, so by induction we conclude that $g_\alpha\in\Gamma(U,\cI)$, as required.
\end{proof}
The next proposition generalises Theorem 3.2 of \cite{BE}, which applies to linear actions.
\begin{prop}\label{linearH}
Suppose $Z$ is a smooth connected closed subset of $X$, every point of which has the same stabiliser $W'$ in $W$. Suppose the $W$-translates of $Z$ are all equal to or disjoint with $Z$, and let $WZ$ denote their 
union. Finally let $W''$ be the subgroup of $W$ fixing $Z$ setwise. Then
\[
	\hO_{X,WZ}\otimes_{\cO(X)}H_\co(W,X)\cong\Mat_{[W:W'']}(\C[W'']\otimes_{\C[W']}H_\co(W',\Spf\hO_{X,Z})),
\]
where $H_\co(W',\Spf\hO_{X,Z})$ is an algebra depending only on the following data:
\begin{itemize}
\item the ring $\hO_{X,Z}$,
\item the action of $W'$ on $\hO_{W,Z}$,
\item the extension of $\omega$ to a map $\Der_\C(\hO_{X,Z})\wedge\Der_\C(\hO_{X,Z})\rightarrow\hO_{X,Z}$, and
\item the parameters $c(Z',s)$ for $Z'\supseteq Z$.
\end{itemize}
The isomorphism is natural up to a choice of coset representatives for $W''$ in $W$. There is a natural action of $\C[W'']\otimes_{\C[W']}H_\co(W',\Spf\hO_{X,Z})$ on $\hO_{X,Z}\otimes_{\cO(X)}M$ for any $H_\co(W,X)$-module $M$.
\end{prop}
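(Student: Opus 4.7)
Set $A := \hO_{X, WZ} \otimes_{\cO(X)} H_\co(W, X)$, an associative algebra by Proposition \ref{formal}. The plan is to extract a matrix algebra structure from the $W$-orbit decomposition of $WZ$, then identify the corner at one component with $\C[W''] \otimes_{\C[W']} H_\co(W', \Spf \hO_{X, Z})$. Since the $W$-translates of $Z$ are pairwise equal or disjoint, choosing coset representatives $w_1 = 1, w_2, \ldots, w_k$ for $W/W''$ gives $WZ = \bigsqcup_i w_i Z$, and hence $\hO_{X, WZ} = \bigoplus_i \hO_{X, w_i Z}$ with orthogonal idempotents $e_i$ permuted by $W$ (the stabiliser of $e_1$ being $W''$). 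The products $e_i w_i w_j^{-1} e_j \in A$ form a full system of matrix units, so a standard Morita-type argument yields $A \cong \Mat_k(e_1 A e_1)$, with the only arbitrariness being the choice of $w_i$.

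To analyse $e_1 A e_1$, I examine the images of the generators of $H_\co(W, X)$: elements of $\cO(X)$ map into $\hO_{X, Z}$, and $e_1 \C[W] e_1 = \C[W''] e_1$. For a Dunkl-Opdam generator
\[
D_v = v + \sum_{(Z', s) \in S} \frac{2 c(Z', s)}{1 - \lambda_{Z', s}} \zeta_{Z'}(v)(s - 1),
\]
I partition the sum by whether $Z \subseteq Z'$. The key observation is that whenever $Z \not\subseteq Z'$, in fact $Z \cap Z' = \emptyset$: if $s \notin W'$ then $s$ fixes no point of $Z$ (all stabilisers on $Z$ equal $W'$), so $Z \cap X^s = \emptyset$; if $s \in W'$ then $Z \subseteq X^s$, but the components of the smooth scheme $X^s$ are disjoint, so $Z$ lies in a component of $X^s$ disjoint from $Z'$. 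In either case $\alpha_{Z'}$ is invertible in $\hO_{X, Z}$, so $\zeta_{Z'}(v)$ restricts to a regular element there. After applying $e_1$ on both sides, these summands contribute elements of $\hO_{X, Z} \cdot \C[W'']$ (summands with $s \notin W''$ being killed by $e_1 s e_1 = 0$), while the remaining summands, with $Z \subseteq Z'$ (forcing $s \in W'$), assemble into the natural $W'$-Dunkl-Opdam operator $D_v^{W'}$ on $\Spf \hO_{X, Z}$.

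I \emph{define} $H_\co(W', \Spf \hO_{X, Z})$ by mimicking Definition \ref{HWX}: it is the subalgebra of the natural completion of $\C[W'] \ltimes D_\omega(X_{\rm reg})$ along $Z$ generated by $\C[W'] \ltimes \hO_{X, Z}$ and these $D_v^{W'}$ for $v \in \Der_\C(\hO_{X, Z})$. By inspection this depends only on the listed data. The analysis above then shows $e_1 A e_1$ is generated by $H_\co(W', \Spf \hO_{X, Z})$ and $\C[W'']$, meeting in $\C[W']$; since $W'$ is normal in $W''$ (as $w W' w^{-1}$ fixes $wZ = Z$ pointwise for $w \in W''$), this assembles into the claimed $\C[W''] \otimes_{\C[W']} H_\co(W', \Spf \hO_{X, Z})$. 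Applying the same argument to $\hO_{X, WZ} \otimes_{\cO(X)} M$ in place of $A$ yields the natural module action.

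The main obstacle is making the formal Cherednik algebra $H_\co(W', \Spf \hO_{X, Z})$ intrinsic: one must verify a PBW-style spanning result in this formal setting (the analogue of the spanning argument in the proof of Proposition \ref{local}), confirming that monomials $f \, w \, D_{v_1}^{W'} \cdots D_{v_k}^{W'}$ span, so the algebra really is determined by the listed local data and not by its a priori embedding in $A$. Secondary care is needed in handling the inverse-limit topology when manipulating the matrix units.
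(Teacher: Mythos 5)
Your decomposition into matrix units $e_i w_i w_j^{-1} e_j$ and the reduction $A \cong \Mat_k(e_1 A e_1)$ is exactly the paper's matrix algebra isomorphism $\phi$, just phrased intrinsically rather than via an explicit map. Your ``key observation'' that $Z \not\subseteq Z'$ forces $Z \cap Z' = \emptyset$ is the paper's argument too, and the resulting split of the Dunkl--Opdam sum into the ``regular on $\hO_{X,Z}$'' part (absorbed into $\C[W''] \ltimes \hO_{X,Z}$) and the reduced operator $D'$ matches. One presentational point you pass over: to manipulate the $D_v$ inside the completion at all, the paper first embeds $\hO_{X,WZ} \otimes_{\cO(X)} H_\co(W,X)$ into the concrete algebra $\C[W] \ltimes (K(\hO_{X,WZ}) \otimes_{\cO(X)} D_\omega(X))$, so that $\zeta_{Z'}(v)$ and the idempotents $e_i$ all live in one place; your calculations tacitly assume such an ambient algebra exists.

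The ``main obstacle'' you flag at the end is not actually needed, and this is where your argument and the paper's genuinely diverge. You propose proving a PBW-style spanning result for the formal Cherednik algebra to show it is intrinsic. The paper instead argues directly about the generators: the extended map $\hat\zeta_{Z'}$ is pinned down by the condition $\hat\zeta_{Z'}(v) \in v(f)/f + \hO_{X,Z}\otimes_{\cO(X)}\Gamma(U,\cO_X)$ for $f$ a local generator of $I_{Z'}$, hence up to an element of $\hO_{X,Z}$; consequently $\hat D'$ is determined, up to a map landing in $\C[W'] \ltimes \hO_{X,Z}$, by the four listed data. Since $\C[W'] \ltimes \hO_{X,Z}$ is already in the generating set, the subalgebra generated is independent of the remaining choice, with no spanning result required. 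This intrinsicness is also precisely what establishes the $W''$-stability of $H_\co(W',\Spf\hO_{X,Z})$ under conjugation (the four data are manifestly $W''$-invariant), which you need to form $\C[W'']\otimes_{\C[W']}H_\co(W',\Spf\hO_{X,Z})$; your phrase ``meeting in $\C[W']$'' glosses over this point. So: correct skeleton, same route, but the one step you thought was hard dissolves once you observe that the ambiguity in $\hat D'$ is already absorbed by another generator.
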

\textbf{Remarks:}
\begin{enumerate}
\item
	The construction of $H_\co(W,X)$ can be extended to allow $X$ to be a formal scheme, and the algebra $H_\co(W',\Spf\hO_{X,Z})$ is an example of this construction. Nevertheless we give a self-contained definition 
	of this algebra below without making reference to formal schemes.
\item
	When $\omega=0$, the following proof can be simplified by embedding $H_\co(W,X)$ in $\End_\C(\cO(X))$ rather than $\C[W]\ltimes D_\omega(X_{\rm reg})$.
\end{enumerate}
\begin{proof}
We have a natural isomorphism
\[
	\Der_\C(\hO_{X,WZ})\cong\hO_{X,WZ}\otimes_{\cO(X)}TX,
\]
so the closed 2-form
\[
	\omega:TX\otimes_{\cO(X)}TX\rightarrow\cO(X)
\]
extends naturally to a closed 2-form $\Der_\C(\hO_{X,WZ})\otimes_{\hO_{X,WZ}}\Der_\C(\hO_{X,WZ})\rightarrow\hO_{X,WZ}$. We can now define an algebra $D_\omega(\hO_{X,WZ})$ in the same way that $D_\omega(X)$ was defined, and
\[
	D_\omega(\hO_{X,WZ})\cong\hO_{X,WZ}\otimes_{\cO(X)}D_\omega(X)
\]
as an $\hO_{X,WZ}$-module. Let $K(\hO_{X,WZ})$ be the localisation of $\hO_{X,WZ}$ at all elements which are not zero divisors. There is a natural algebra structure on
\[
	K(\hO_{X,WZ})\otimes_{\hO_{X,WZ}}D_\omega(\hO_{X,WZ})=K(\hO_{X,WZ})\otimes_{\cO(X)}D_\omega(X).
\]
Since $X$ is smooth and $X_{\rm reg}$ is dense in $X$, the inclusion $\cO(X)\hookrightarrow\hO_{X,Z}$ extends to a monomorphism $\Gamma(X_{\rm reg},\cO_X)\hookrightarrow K(\hO_{X,WZ})$. We therefore obtain an algebra monomorphism
\[
	D_\omega(X_{\rm reg})=\Gamma(X_{\rm reg},\cO_X)\otimes_{\cO(X)}D_\omega(X)\hookrightarrow K(\hO_{X,WZ})\otimes_{\cO(X)}D_\omega(X).
\]
From this we construct a monomorphism
\[
	H_\co(W,X)\hookrightarrow\C[W]\ltimes D_\omega(X_{\rm reg})\hookrightarrow\C[W]\ltimes(K(\hO_{X,WZ})\otimes_{\cO(X)}D_\omega(X)).
\]
Therefore $\hO_{X,WZ}\otimes_{\cO(X)}H_\co(W,X)$ may be naturally identified with a subalgebra of $\C[W]\ltimes(K(\hO_{X,WZ})\otimes_{\cO(X)}D_\omega(X))$.

Let $C$ be a set of left coset representatives for $W''$ in $W$. We choose $1\in C$ to be the representative for $W''$ itself. We have
\[
	WZ=\coprod_{w\in C}wZ.
\]
We have assumed that the closed sets on the right are pairwise disjoint, so
\[
	K(\hO_{X,WZ})=\bigoplus_{w\in C}K(\hO_{X,wZ}),
\]
where $K(\hO_{X,wZ})$ is the field of fractions of $\hO_{X,wZ}$. Let $e$ denote the identity of $\hO_{X,Z}$ in the above direct sum. Note that $W''$ fixes $e$. Moreover for any $w\in C\setminus\{1\}$ we have $(\act[w]e)e=0$, 
since $\act[w]e$ is the identity of $\hO_{X,wZ}$ in this direct sum. It follows that there is an isomorphism
\[
	\phi:\C[W]\ltimes(K(\hO_{X,WZ})\otimes_{\cO(X)}D_\omega(X))\rightarrow\Mat_C(\C[W'']\ltimes(K(\hO_{X,Z})\otimes_{\cO(X)}D_\omega(X))),
\]
where $\Mat_C$ denotes the algbera of matrices with rows and columns indexed by $C$. Explicitly, for $w_1,\,w_2\in C$ and $a\in\C[W'']\ltimes(K(\hO_{X,Z})\otimes_{\cO(X)}D_\omega(X))$, $\phi$ sends $w_1eaw_2^{-1}$ to the matrix with 
$a$ in entry $(w_1,w_2)$ and zeros elsewhere. Therefore to describe $\hO_{X,WZ}\otimes_{\cO(X)}H_\co(W,X)$, it suffices to determine its image under $\phi$.

Now $\hO_{X,WZ}\otimes_{\cO(X)}H_\co(W,X)$ is generated as an algebra by $\C[W]\ltimes\hO_{X,WZ}$ and the Dunkl-Opdam operators. Under $\phi$, the first subalgebra generates
\[
	\Mat_C(\C[W'']\ltimes\hO_{X,Z})\subseteq\Mat_C(\C[W'']\ltimes(K(\hO_{X,Z})\otimes_{\cO(X)}D_\omega(X))).
\]
In particular, this contains $\Mat_C(\C)$, so the image of $\hO_{X,WZ}\otimes_{\cO(X)}H_\co(W,X)$ is $\Mat_C(A)$, where $A$ is the subalgebra of $\C[W'']\ltimes(K(\hO_{X,Z})\otimes_{\cO(X)}D_\omega(X))$ generated by 
$\C[W'']\ltimes\hO_{X,Z}$ and the entries of the images of the Dunkl-Opdam operators under $\phi$. Recall that these operators are given by
\[
	D_v=v+\sum_{(Z',s)\in S}\frac{2c(Z',s)}{1-\lambda_{Z',s}}\zeta_{Z'}(v)(s-1)\in\C[W]\ltimes D_\omega(X_{\rm reg}),
\]
for $v\in TX$. Given $(Z',s)\in S$, if $Z$ intersects $Z'$, then there is a point in $Z$ fixed by $s$, so $s\in W'$ and $Z\subseteq Z'$. On the other hand, if $Z'$ is disjoint with $Z$, then $\zeta_{Z'}(v)$ 
defines a regular function in $\hO_{X,Z}$. Therefore $A$ is generated by $\C[W'']\ltimes\hO_{X,Z}$ and the image of the map
\begin{eqnarray*}
	&&D':TX\rightarrow\C[W'']\ltimes(K(\hO_{X,Z})\otimes_{\cO(X)}D_\omega(X)),\\
	&&D'(v)=v+\sum_{(Z',s)\in S\atop Z'\supseteq Z}\frac{2c(Z',s)}{1-\lambda_{Z',s}}\zeta_{Z'}(v)(s-1).
\end{eqnarray*}
Since $\Der_\C(\hO_{X,Z})\cong\hO_{X,Z}\otimes_{\cO(X)}TX$, we may extend the $\cO(X)$-linear maps $\zeta_{Z'}:TX\rightarrow\cO(X)\langle Z'\rangle$ to $\hO_{X,Z}$-linear maps
\[
	\hat\zeta_{Z'}:\Der_\C(\hO_{X,Z})\rightarrow\hO_{X,Z}\otimes_{\cO(X)}\cO(X)\langle Z'\rangle\subseteq K(\hO_{X,Z}),
\]
and the above formula then extends $D'$ to an $\hO_{X,Z}$-linear map
\[
	\hat D':\Der_\C(\hO_{X,Z})\rightarrow\C[W']\ltimes(K(\hO_{X,Z})\otimes_{\cO(X)}D_\omega(X)).
\]
Since $A$ contains $\hO_{X,Z}$ and the image of $D'$, it also contains the image of $\hat D'$. Given $s\in W'$, let $I_s\subseteq\hO_{X,Z}$ denote the ideal generated by $\act f-f$ for $f\in\hO_{X,Z}$. Let $Z'$ denote the 
component of $X^s$ containing $Z$, and let $I_{Z'}\subseteq\cO(X)$ be the ideal vanishing on $Z'$. Then $I_s=\hO_{X,Z}I_{Z'}$, so $(Z',s)\in S$ exactly when $I_s$ is locally principal. Moreover if $f\in\Gamma(U,\cO_X)$ generates 
$I_{Z'}$ on some open subset $U\subseteq X$, then it generates $I_s$ on the corresponding open subset of $\Spec\hO_{X,Z}$, and
\[
	\hat\zeta_{Z'}(v)\in\frac{v(f)}{f}+\hO_{X,Z}\otimes_{\cO(X)}\Gamma(U,\cO_X)
\]
for any $v\in\Der_\C(\hO_{X,Z},\hO_{X,Z})$. This formula determines $\hat\zeta_{Z'}(v)$ up to an element of $\hO_{X,Z}$. Thus $\hat D'$ is determined, up to a map $\Der_\C(\hO_{X,Z})\rightarrow\C[W']\ltimes\hO_{X,Z}$, 
by the action of $W'$ on $\hO_{X,Z}$ and the parameters $c(Z',s)$ for $Z'\supseteq Z$. These data therefore determine the subalgebra
\[
	H_\co(W',\Spf\hO_{X,Z})\subseteq\C[W']\ltimes(K(\hO_{X,Z})\otimes_{\cO(X)}D_\omega(X))
\]
generated by $\C[W']\ltimes\hO_{X,Z}$ and the image of $\hat D'$. In particular, $H_\co(W',\Spf\hO_{X,Z})$ is preserved by conjugation by $W''$, so $A=\C[W'']\otimes_{\C[W']}H_\co(W',\Spf\hO_{X,Z})$, as required.

Finally consider any $H_\co(W,X)$-module $M$. The previous proposition gives an action of $\hO_{X,WZ}\otimes_{\cO(X)}H_\co(W,X)$ on $\hO_{X,WZ}\otimes_{\cO(X)}M$, so
\[
	\hO_{X,Z}\otimes_{\cO(X)}M=e\hO_{X,WZ}\otimes_{\cO(X)}M
\]
admits an action of $e\hO_{X,WZ}\otimes_{\cO(X)}H_\co(W,X)e$. But $\phi(e)$ is the monomial matrix with a $1$ in the $(1,1)$ entry, so restricting $\phi$ gives an isomorphism
\[
	e\hO_{X,WZ}\otimes_{\cO(X)}H_\co(W,X)e\cong\C[W'']\ltimes H_\co(W',\Spf\hO_{X,Z}),
\]
thus giving the required action. Note that this does not depend on the choice of coset representatives $C$, since we always take $C$ to contain $1$.
\end{proof}
We may now prove our first main result. In this proof we allow $X$ to not be affine, and our class $\psi\in H^2(X,\Omega_X^{\geq1})$ may not be represented by a global 2-form.
\begin{proof}[Proof of Theorem \ref{coh}]
\begin{enumerate}
\item
	Since being a submodule is a local property, we may suppose $X$ is affine and that $\psi$ is represented by $\omega\in(\Omega_X^{2,cl})^W$. Consider the module of global sections $M=\Gamma(X,\cM)\in H_\co(W,X){\rm-mod}$. Since 
	$Z$ is $W$-invariant, it is clear that $\Gamma_Z(M)$ is preserved by $\C[W]\ltimes\cO(X)$. It suffices to show that $D_v$ preserves $\Gamma_Z(M)$ for each $v\in TX$. Let $I\subseteq\cO(X)$ denote the ideal vanishing on $Z$. Recall 
	that $[D_v,\cO(X)]\subseteq\C[W]\ltimes\cO(X)$. Since $WI=IW$, it follows inductively that $I^{k+1}D_v\subseteq H_\co I^k$ for each $k\geq0$. Therefore if $m\in\Gamma_Z(M)$, then $I^km=0$ for some $k$, whence $I^{k+1}D_vm=0$, so 
	$D_vm\in\Gamma_Z(M)$.
\item
	Consider $Y\in P$ and $x\in Y$, with stabiliser $W'\subseteq W$. Let $\h=T_xX/T_xX^W$. Applying Proposition \ref{linear} to $Y$, there is a $W'$-invariant affine open neighbourhood $U$ of $x$, with 
	$U\cap Y$ closed in $U$, and a $W'$-equivariant isomorphism
	\[
		\phi:\hO_{(U\cap Y)\times\h,U\cap Y}\isom\hO_{U,U\cap Y},
	\]
	where we identify $U\cap Y$ with $(U\cap Y)\times\{0\}\subseteq(U\cap Y)\times\h$. Moreover $\phi$ induces the identity on $\cO(U\cap Y)$. Let $I$ be the kernel of the map 
	$\hO_{(U\cap Y)\times\h,U\cap Y}\twoheadrightarrow\cO(U\cap Y)$, let $C$ be a set of left coset representatives for $W'$ in $W$, and let
	\[
		\bar U=\coprod_{w\in C}wU.
	\]
	We have a natural \'etale morphism $\bar U\rightarrow X$, so by Propositions \ref{local} and \ref{linearH}, for any $\cM\in\cH_\cco\coh$ we have a natural action of $H_\co(W',\Spf\hO_{U,U\cap Y})$ on
	\[
		M=\hO_{U,U\cap Y}\otimes_{\cO(U)}\Gamma(U,\cM),
	\]
	where $\omega\in(\Omega_U^{2,cl})^W$ represents $\psi$ on $U$. The isomorphism $\phi$ gives rise to an isomorphism
	\[
		\phi_\co:H_{c,\phi^*\omega}(W',\Spf\hO_{(U\cap Y)\times\h,U\cap Y})\isom H_\co(W',\Spf\hO_{U,U\cap Y}).
	\]
	Let $\nu$ denote the pullback of $i_{U\cap Y}^*\omega$ to $(U\cap Y)\times\h$ under the projection map $(U\cap Y)\times\h\rightarrow U\cap Y$. By abuse of notation, we will also use $\nu$ to denote the completed map
	\[
		\Der_\C(\hO_{(U\cap Y)\times\h,U\cap Y})\otimes_\C\Der_\C(\hO_{(U\cap Y)\times\h,U\cap Y})\rightarrow\hO_{(U\cap Y)\times\h,U\cap Y}.
	\]
	Then
	\[
		\nu(v,v')-(\phi^*\omega)(v,v')\in I
	\]
	for vector fields $v,\,v'\in\Der_\C(\hO_{(U\cap Y)\times\h,U\cap Y})$ which preserve $I$. It follows that $\nu-\phi^*\omega=d\alpha$ for some 1-form
	\[
		\alpha:\Der_\C(\hO_{(U\cap Y)\times\h,U\cap Y})\rightarrow\hO_{(U\cap Y)\times\h,U\cap Y}
	\]
	which satisfies $\alpha(v)\in I$ whenever $v\in\Der_\C(\hO_{(U\cap Y)\times\h,U\cap Y})$ preserves $I$. Since we are working over characteristic 0, we may suppose $\alpha$ is $W'$-invariant. This gives an 
	isomorphism
	\[
		\alpha_\co:H_{c,\nu}(W',\Spf\hO_{(U\cap Y)\times\h,U\cap Y})\isom H_{c,\phi^*\omega}(W',\Spf\hO_{(U\cap Y)\times\h,U\cap Y}).
	\]
	Proposition \ref{linearH} also gives a natural isomorphism
	\[
			H_{c,\nu}(W',\Spf\hO_{(U\cap Y)\times\h,U\cap Y})\cong\hO_{(U\cap Y)\times\h,U\cap Y}\otimes_{\cO(U\cap Y)\otimes\C[\h]}H_{c,\nu}(W',(U\cap Y)\times\h),
	\]
	and $H_{c,\nu}(W',(U\cap Y)\times\h)=D_{i_{U\cap Y}^*\omega}(U\cap Y)\otimes H_c(W',\h)$. Here for any $s\in W'$ acting by reflection on $\h$, we take $c(s)=c(Z,s)$, where $Z$ is the component of $X^s$ containing $Y$. 
	Composing these isomorphisms, we obtain an action of $D_{i_{U\cap Y}^*\omega}(U\cap Y)$ on $M$ commuting with the action of $\phi(\C[\h])\subseteq\hO_{U,U\cap Y}$. Therefore $D_{i_{U\cap Y}^*\omega}(U\cap Y)$ acts on
	\[
		M/\phi(\h^*)M=M/\phi(I)M=\Gamma(U\cap Y,i_Y^*\cM).
	\]
	We will show that the latter action is independent of the choices of $\phi$ and $\alpha$, thus proving that the action is natural and patches to give an action on all of $Y$.
	
	Recall from the proof of Proposition \ref{linearH} that $H_\co(W',\Spf\hO_{U,U\cap Y})$ is a subalgebra of $\C[W']\ltimes(K(\hO_{U,U\cap Y})\otimes_{\cO(U\cap Y)}D_\omega(U))$. The latter also contains a natural 
	copy of $\Der_\C(\hO_{U,U\cap Y})$. We constructed an $\hO_{U,U\cap Y}$-linear map
	\[
		\hat D':\Der_\C(\hO_{U,U\cap Y})\rightarrow H_\co(W',\Spf\hO_{U,U\cap Y}).
	\]
	If $v\in\Der_\C(\hO_{U,U\cap Y})$ is $W'$-invariant then
	\[
		\hat D'(v)\in v+\C[W']\ltimes\hO_{U,U\cap Y}\subseteq\C[W']\ltimes(K(\hO_{U,U\cap Y})\otimes_{\cO(U\cap Y)}D_\omega(U)).
	\]
	In particular $v\in H_\co(W',\Spf\hO_{U,U\cap Y})$. Moreover by choosing $\hat\zeta_Z$ appropriately, we may ensure $\hat D'(v)\in v+\C[W']\ltimes\phi(I)$ for any such $v$.
	
	Consider a vector field $v$ on $U\cap Y$. We may extend $v$ naturally to a vector field on $(U\cap Y)\times\h$, and therefore a derivation of $\hO_{(U\cap Y)\times\h,U\cap Y}$. Let $\bar v$ denote the pushforward of this 
	derivation under $\phi$. This has the following properties:
	\begin{enumerate}
	\item $\bar v\in\Der_\C(\hO_{U,U\cap Y})$ is $W'$ invariant.
	\item
		The following diagram commutes:
		\[\xymatrix{
		\hO_{U,U\cap Y}\ar[r]^-{\bar v}\ar@{->>}[d]&\hO_{U,U\cap Y}\ar@{->>}[d]\\
		\cO(U\cap Y)\ar[r]^-{v}&\cO(U\cap Y).
		}\]
	\end{enumerate}
	As noted above, the first property ensures $\bar v\in H_\co(W',\Spf\hO_{U,U\cap Y})$. The action of $v$ on $M$ constructed above is exactly the action of
	\[
		\bar v+\phi\alpha(v)\in H_\co(W',\Spf\hO_{U,U\cap Y})\subseteq\C[W']\ltimes(K(\hO_{U,U\cap Y})\otimes_{\cO(U\cap Y)}D_\omega(U)).
	\]
	However, $\phi\alpha(v)\in\phi(I)$, so $v$ acts on $M/\phi(I)M$ as simply $\bar v$. Therefore it suffices to prove that the action of $\bar v$ on $M/\phi(I)M$ is determined by the above two properties. If $\bar v'$ also 
	satisfies these properties, then
	\[
		(\bar v-\bar v')(\hO_{U,U\cap Y})\subseteq\ker(\hO_{U,U\cap Y}\rightarrow\cO(U\cap Y))=\phi(I).
	\]
	That is, $\bar v-\bar v'\in\phi(I)\Der_\C(\hO_{U,U\cap Y})$. Since $\bar v$ and $\bar v'$ are fixed by $W'$, this gives
	\begin{eqnarray*}
		\bar v-\bar v'
			&\in&\hat D'(\bar v-\bar v')+\C[W']\ltimes\phi(I)\\
			&\subseteq&\phi(I)\hat D'(\Der_\C(\hO_{U,U\cap Y}))+\phi(I)\C[W']\\
			&\subseteq&\phi(I)H_\co(W',\Spf\hO_{U,U\cap Y}).
	\end{eqnarray*}
	Thus $\bar v-\bar v'$ acts as zero on $M/\phi(I)M$, as required.
\item
	It is well known that a coherent sheaf with a connection is locally free, so the previous part shows that each $Y\in P$ is either contained in or disjoint with $\supp\cM$. Since
	\[
		X=\coprod_{Y\in P}Y,
	\]
	we conclude that $\supp\cM$ is a disjoint union of sets in $P$. Moreover $\supp\cM$ is closed and the closure $\overline Y$ of any $Y\in P$ is irreducible, so the irreducible components of $\supp\cM$ 
	have the form $\overline Y$ for some $Y\in P$. Let $P_{\cM}$ denote the set of $Y\in P$ such that $\overline Y$ is an irreducible component of $\supp\cM$. The action of $W$ on $M$ ensures that $P_{\cM}$ is $W$-invariant, 
	so it suffices to prove that $P_{\cM}\subseteq P'$.
	
	Pick $Y\in P_{\cM}$ and let $x$, $W'$, $U$, $M$, $\h$, $\phi$ and $I$ be as above. Suppose $x\in\overline Y'$ for some $Y'\in P_{\cM}$. Then $\overline Y'$ is a connected component of $X^{W''}$ for some $W''\subseteq W$, and we 
	must have $W''\subseteq\stab_W(x)=W'$. But then $\overline Y\subseteq X^{W''}$, so $\overline Y\subseteq\overline Y'$ since $\overline Y$ is connected. Since $\overline Y$ is an irreducible component of $\supp\cM$, we conclude 
	that $Y'=Y$. That is, $\supp\cM$ coincides with $Y$ on some neighbourhood of $x$. By shrinking $U$, we suppose that this holds on $U$. Then some power of $\phi(I)$ kills $M$. 
	It follows that
	\[
		N=k_x\otimes_{\cO(U\cap Y)}M
	\]
	is finite dimensional, where $k_x$ is the residue field of the point $x\in U\cap Y$, and the map $\cO(U\cap Y)\rightarrow\hO_{U,U\cap Y}$ is given by the map $\phi$. Moreover the action 
	of $D_{i_{U\cap Y}^*\omega}(U\cap Y)\otimes H_c(W',\h)$ on $M$ gives rise to an action of $H_\co(W',\h)$ on $N$. Finally $N$ is nonzero, since
	\[
		\C[\h]/\h^*\C[\h]\otimes_{\C[\h]}N
	\]
	is the fibre of $\cM$ at $x\in X$, which is nonzero by assumption.
\item
	We keep the above notation. Since some power of $I$ kills $M$, and the ring $\hO_{(U\cap Y)\times\h,U\cap Y}/I^l$ is finitely generated over $\cO(U\cap Y)$ for any $l$, we conclude that $M$ is a finitely 
	generated module over $\cO(U\cap Y)$. Since it admits a connection, it is locally free. We will show in Lemma \ref{nilp} that there is an integer $K_Y$, depending only on $\h$, $W'$ and $c$, such that 
	$N$ is killed by $(\h^*)^{K_Y}$. That is,
	\[
		I_Y^{K_Y}M\subseteq\m_xM,
	\]
	where $I_Y\subseteq\cO(U)$ is the ideal vanishing on $U\cap Y$, and $\m_x\subseteq\cO(U\cap Y)$ is the maximal ideal corresponding to the point $x$. Up to (non-canonical) isomorphism, the algebra $H_\co(W',\h)$ is 
	independent of the point $x\in Y$. Therefore this equation holds for any $x\in Y\cap U$. Together with local freeness, this ensures that $I_Y^{K_Y}M=0$. Since $\supp\cM$ coincides with $Y$ on $U$, we conclude that 
	$\cI^{K_Y}\cM$ vanishes on $U$, where $\cI\subseteq\cO_X$ is the ideal sheaf vanishing on $\supp\cM$. Now $U$ was chosen to contain an arbitrary point on $Y$, and $\cI^{K_Y}$ is $W$-invariant, so $\cI^{K_Y}\cM$ vanishes on 
	the union $WY$ of all $W$-translates of $Y$. That is, $\cI^{K_Y}\cM\subseteq\Gamma_Z(M)$, where $Z$ is the complement of $WY$ in $\supp\cM$. Note that $Z$ is closed and $W$-invariant. It now follows by induction on $\supp\cM$ 
	that $M$ is killed by $\cI$ to the power of
	\[
		\sum_{Y\in P'\atop Y\subseteq\supp\cM}K_Y.
	\]
	In particular, this proves the statement with $K=\sum_{Y\in P'}K_Y$.
\item
	Let $K$ be the integer constructed in the previous part. Again we prove the statement by induction on $\supp\cM$, and the case $\supp\cM=\emptyset$ is trivial. Suppose every module with smaller support has finite 
	length. Choose $Y\in P_{\cM}$, and let $\cI_Y\subseteq\cO_X$ denote the ideal sheaf vanishing on $\overline Y$. Let $U\subseteq X$ be some open affine subset intersecting $Y$, and 
	let $I_Y=\Gamma(U,\cI_Y)$. This is prime since $Y$ is irreducible. Consider the ring
	\[
		R=\cO(U)_{(I_Y)}/I_Y^K\cO(U)_{(I_Y)},
	\]
	that is, the $K^{\rm th}$ formal neighbourhood of the (non-closed) generic point of $Y$. Then $I_Y^kR/I_Y^{k+1}R$ is finite dimensional over the field $R/I_YR$ for each $k$, so $R$ is Artinian. Since $\cM$ is 
	coherent, it follows that
	\[
		R\otimes_{\cO(U)}\Gamma(U,\cM)
	\]
	has finite length over $R$. We may therefore assume by induction that the statement also holds for modules with the same support, for which the above $R$-module has smaller length.
	
	Again let $Z$ be the complement of $WY$ in $\supp\cM$. Let $\cI_Z$ and $\cI$ be the ideal sheaves vanishing on $Z$ and $\supp\cM$ respectively, Let $\cM'=\cM/\Gamma_Z(\cM)$ and $\cM''=\cH_\cco\cI_Z^K\cM'$. Since $\Gamma_Z(\cM)$ 
	and $\cM'/\cM''$ are supported on $Z$, they have finite length by induction. If $\cM''$ is zero or irreducible, we are done. Suppose otherwise, and let $\cN'$ be a proper nonzero submodule of $\cM''$, and let $\cN$ be the 
	inverse image of $\cN'$ in $\cM$. We have $\cI_Y\cI_Z\subseteq\cI$, so
	\[
		\cI_Z^K\cI_Y^K\cM\subseteq\cI^K\cM=0.
	\]
	Therefore $\cI_Y^K\cM\subseteq\Gamma_Z(\cM)$, so $\cI_Y^K\cM'=0$ and the map
	\[
		\cO(U)_{(I_Y)}\otimes_{\cO(U)}\Gamma(U,\cM')\rightarrow R\otimes_{\cO(U)}\Gamma(U,\cM')
	\]
	is an isomorphism. Since localisation is exact, we conclude that
	\[
		R\otimes_{\cO(U)}\Gamma(U,\cN')\rightarrow R\otimes_{\cO(U)}\Gamma(U,\cM')
	\]
	is injective. Therefore we have a short exact sequence
	\[
		R\otimes_{\cO(U)}\Gamma(U,\cN')\hookrightarrow R\otimes_{\cO(U)}\Gamma(U,\cM')\twoheadrightarrow R\otimes_{\cO(U)}\Gamma(U,\cM'/\cN').
	\]
	We claim that the first and last modules are nonzero. This is equivalent to $\supp\cN'$ and $\supp(\cM'/\cN')$ containing $Y$. Suppose the first fails. Then $\cN'$ is supported on $Z$, so $\cI_Z^K\cN'=0$. Thus
	\[
		\cI_Z^{2K}\cN\subseteq\cI_Z^K\Gamma_Z(\cM)=0.
	\]
	Hence $\cN\subseteq\Gamma_Z(\cM)$, so that $\cN'=0$, a contradiction. Now suppose $\cM'/\cN'$ is supported on $Z$. Then $\cI_Z^K(\cM'/\cN')=0$, so $\cN'\supseteq\cI_Z^K\cM'$. Thus $\cN'\supseteq\cH_\cco\cI_Z^K\cM'=\cM''$, 
	contradicting the assumption that $\cN'$ is a proper submodule of $\cM''$. This proves the claim, and we conclude that
	\[
		\len_R(R\otimes_{\cO(U)}\Gamma(U,\cN')),\,\len_R(R\otimes_{\cO(U)}\Gamma(U,\cM'/\cN'))<\len_R(R\otimes_{\cO(U)}\Gamma(U,\cM)),
	\]
	so $\cN'$ and $\cM'/\cN'$ have finite length by induction. Again, since $\Gamma_Z(\cM)$ has finite length, we are done.
\item
	Again let $Y$ be any element of $P_{\cM}$ and let $WY$ and $Z$ be as above. Let $\cI_{WY}$, $\cI_Z$ and $\cI$ be the ideal sheaves in $\cO_X$ vanishing on $\overline{WY}$, $Z$ and $\supp\cM$ respectively. Then
	\[
		\cI_Z^K\cI_{WY}^K\cM\subseteq\cI^K\cM=0.
	\]
	Thus $\cI_{WY}^K\cM\subseteq\Gamma_Z(\cM)$. Now $\Gamma_Z(\cM)$ is a submodule of $\cM$ by (1), and it is proper since $Z\subsetneq\supp\cM$. Since $\cM$ is irreducible, we conclude that $\Gamma_Z(\cM)=0$. 
	Hence $\cI_{WY}^K\cM=0$, so $\supp\cM=\overline{WY}$ as required.
\end{enumerate}
\end{proof}
\section{Linear Actions}\label{Sec:linear}
Now suppose $X=\h$ is a finite dimensional vector space with a linear action. We briefly review some known results concerning representations of $H_c(W,\h)$; see \cite{DunklOpdam}. We also prove Proposition \ref{RS} 
and one direction of Theorem \ref{suppSnpart}.
\subsection{Verma modules}
Any $W$-module $\tau$ becomes a $\C[W]\ltimes\C[\h^*]$-module by declaring that $\h$ acts as $0$. We may therefore construct an $H_c$-module
\[
	M(\tau)=H_c\otimes_{\C[W]\ltimes\C[\h^*]}\tau.
\]
This is the \emph{Verma module} corresponding to $\tau$. The multiplication map
\[
	\C[\h]\otimes_\C\C[W]\otimes_\C\C[\h^*]\rightarrow H_c
\]
is a vector space isomorphism, so $M(\tau)\cong\C[\h]\otimes_\C\tau$ as a $\C[W]\ltimes\C[\h]$-module. As a special case, when $\tau$ is the trivial representation, we obtain an action of $H_c$ on 
$\C[\h]$; this is called the \emph{polynomial representation}. If $\tau$ is irreducible, then there is a unique maximal proper submodule $J(\tau)$ of $M(\tau)$, and the quotient $L(\tau)$ is irreducible. Let $y_i$ be a basis of $\h$, 
and $x_i$ the dual basis of $\h^*$. Define the \emph{Euler element} by
\[
	\eu=\sum_ix_iy_i+\sum_{s\in S}c(s)\frac{2}{\lambda_s-1}s\in H_c.
\]
This element has the useful property that $\ad\eu$ acts as $0$ on $W$, as $1$ on $\h^*\subseteq H_c$, and as $-1$ on $\h\subseteq H_c$. From this fact we deduce the following lemma, which was used in the proof of Theorem \ref{coh}.
\begin{lemm}\label{nilp}
There exists a positive integer $K$, depending only on $c$ and $W$, such that $\h^K\subseteq H_c$ and $(\h^*)^K\subseteq H_c$ annihilate any finite dimensional $H_c$-module.
\end{lemm}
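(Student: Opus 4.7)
The plan is to use the Euler element $\eu$ as a weight operator: since $[\eu,x]=x$ for $x\in\h^*$ and $[\eu,y]=-y$ for $y\in\h$, the element $\h^*$ raises and $\h$ lowers generalized $\eu$-eigenvalues by $1$. On a finite dimensional module $N$, there are only finitely many $\eu$-eigenvalues, so it suffices to bound the ``spread'' of these eigenvalues uniformly in $N$, depending only on $c$ and $W$.

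First I would identify the extremal $\eu$-eigenvalues. Decomposing $N$ into generalized $\eu$-eigenspaces, let $\mu_{\min}$ and $\mu_{\max}$ be the eigenvalues of smallest and largest real part respectively. A nonzero vector in the $\mu_{\min}$-eigenspace must be killed by $\h$ (else we could lower further), so on this subspace $\eu$ acts via the element
\[
	z=\sum_{s\in S}c(s)\frac{2}{\lambda_s-1}s\in\C[W].
\]
Thus $\mu_{\min}$ lies in the spectrum $\Lambda_{\min}$ of $z$ acting on $\C[W]$, a finite set depending only on $c$ and $W$. Dually, using the commutation relation $[y,x]=\langle y,x\rangle-\sum_s c(s)\langle y,\alpha_s\rangle\langle\ac,x\rangle s$, a short calculation rewrites
\[
	\eu=\sum_i y_ix_i-\dim\h+\sum_{s\in S}c(s)\frac{2\lambda_s}{\lambda_s-1}s,
\]
so on vectors killed by $\h^*$ the element $\eu$ acts via $z'=-\dim\h+\sum_s c(s)\frac{2\lambda_s}{\lambda_s-1}s\in\C[W]$. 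Hence $\mu_{\max}$ lies in the spectrum $\Lambda_{\max}$ of $z'$, again a finite set depending only on $c$ and $W$.

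Next, since $\h$ lowers the $\eu$-weight by $1$, every generalized $\eu$-eigenvalue of $N$ is of the form $\mu_{\min}+k$ for some $k\in\Z_{\geq0}$; in particular $\mu_{\max}-\mu_{\min}\in\Z_{\geq0}$. Therefore $\mu_{\max}-\mu_{\min}$ lies in the finite set
\[
	\{\beta-\alpha\mid\alpha\in\Lambda_{\min},\,\beta\in\Lambda_{\max},\,\beta-\alpha\in\Z_{\geq0}\},
\]
whose cardinality depends only on $c$ and $W$. Let $K$ be any integer strictly greater than the maximum of this set.

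Finally, the conclusion is immediate: for any generalized $\eu$-eigenvector $v\in N$ with eigenvalue $\alpha$, the vector $(\h^*)^Kv$ lies in the generalized eigenspace for $\alpha+K$. But $\alpha+K>\mu_{\min}+K>\mu_{\max}$, so no such eigenspace exists, forcing $(\h^*)^Kv=0$. Running the symmetric argument with $\h$ in place of $\h^*$ gives $\h^Kv=0$. Since generalized eigenvectors span $N$, both $(\h^*)^K$ and $\h^K$ annihilate $N$, as required. The only real point is the uniformity of the bound, which is guaranteed precisely because $\Lambda_{\min}$ and $\Lambda_{\max}$ are determined by the finite data $(c,W)$; no subtleties beyond this.
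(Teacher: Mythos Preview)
Your approach is essentially the paper's: use $\eu$ to grade, observe that $\h$ and $\h^*$ shift the grade by $\mp1$, and bound the spread of eigenvalues by computing $\eu$ on the extremal eigenspaces via the two expressions $\eu=\sum x_iy_i+z$ and $\eu=\sum y_ix_i-\dim\h+z'$ with $z,z'\in\C[W]$ central. The paper phrases the last step slightly differently but the content is the same.

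There is one small gap. You assert that ``every generalized $\eu$-eigenvalue of $N$ is of the form $\mu_{\min}+k$ for some $k\in\Z_{\geq0}$'', and hence that $\mu_{\max}-\mu_{\min}\in\Z_{\geq0}$. This does not follow merely from $\h$ lowering the weight by $1$: a finite-dimensional $N$ need not have all its $\eu$-eigenvalues in a single $\Z$-coset (think of a direct sum of two simples whose lowest weights lie in different cosets). The paper sidesteps this by working only with real parts: it bounds $d=\max\re(\Lambda)-\min\re(\Lambda)$, noting that both extrema lie in finite sets determined by $(c,W)$, and takes $K>d$. Your argument is easily repaired the same way---drop the integrality claim and set $K$ to be any integer exceeding $\max\{\re(\beta)-\re(\alpha):\alpha\in\Lambda_{\min},\ \beta\in\Lambda_{\max}\}$---after which the final paragraph goes through verbatim. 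Alternatively, reduce first to simple $N$, where the eigenvalues genuinely do lie in one $\Z$-coset.
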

\begin{proof}
Let $M$ be a finite dimensional $H_c$-module. Then $M$ decomposes as
\[
	M=\bigoplus_{\lambda\in\Lambda}M_\lambda,
\]
where $M_\lambda$ is the generalised eigenspace of $\eu$ with eigenvalue $\lambda$, and $\Lambda\subseteq\C$ is the finite set of eigenvalues. Since $\ad\eu$ acts as $-1$ on $\h$, it is clear that $\h$ sends 
$M_\lambda$ to $M_{\lambda-1}$. Thus $\h^K$ kills $M$, where $K$ is any integer larger than $d=\max(\re(\Lambda))-\min(\re(\Lambda))$. The same is true of $\h^*$, and it remains to bound $d$ independently of $M$. 
Pick $\lambda\in\Lambda$ with $\re(\lambda)$ minimal. Then $\h$ acts as $0$ on $M_\lambda$. Since $W$ commutes with $\eu$, it preserves the eigenspace $M_\lambda$. We may therefore find a subspace $\tau\subseteq M_\lambda$ which 
is irreducible under the action of $W$. Then
\[
	\eu|_\tau=\left.\left(\sum_ix_iy_i+\sum_{s\in S}c(s)\frac{2}{\lambda_s-1}s\right)\right|_\tau
		=\sum_{s\in S}c(s)\frac{2}{\lambda_s-1}s|_\tau.
\]
This depends only on the action of $W$ on $\tau$. Since $W$ has only finitely many irreducible modules, there are only finitely many possible values for $\lambda$, once $c$ and $W$ have been chosen. Similarly 
by writing
\[
	\eu=\sum_iy_ix_i-\dim\h+\sum_{s\in S}c(s)\left(\frac{2}{\lambda_s-1}+2\right)s,
\]
we see that there are only finitely many possibilities for $\max(\re(\Lambda))$. Thus $d$ has only finitely many possible values, depending on $W$ and $c$, and may therefore be bounded independently of $M$.
\end{proof}
For a module $M\in H_c\coh$, the following conditions are equivalent:
\begin{enumerate}
\item The action of $\eu$ on $M$ is locally finite.
\item The action of $\h$ on $M$ is locally nilpotent.
\item Every composition factor of $M$ is isomorphic to some $L(\tau)$.
\end{enumerate}
Proposition \ref{RS} states that the category of modules satisfying these conditions is exactly the category $H_c\rs$ of Definition \ref{RSdefn}. We will require the following lemma to prove this.
\begin{lemm}\label{monodromic}
Let $\h$ be a finite dimensional vector space, and suppose $Z\subseteq\h$ is the zero set of a homogeneous ideal in $\C[\h]$ (that is, $Z$ is a cone). Let $\xi\in D(\h\setminus Z)$ denote the Euler vector field. Then $\xi$ 
acts locally finitely on the global sections of any $\cO$-coherent $\D$-module on $\h\setminus Z$ with regular singularities.
\end{lemm}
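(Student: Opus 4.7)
The plan is to exploit the fact that $Z$ being a cone means the $\C^*$-action on $\h$ (whose infinitesimal generator is $\xi$) preserves $U = \h \setminus Z$, and to use this action together with the regular singularity hypothesis to exhaust the global sections by finite-dimensional $\xi$-stable subspaces arising from a projective compactification.

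First I would construct a smooth projective $\C^*$-variety $\bar U$ together with a $\C^*$-equivariant open embedding $U \hookrightarrow \bar U$ such that $D := \bar U \setminus U$ is a normal crossing divisor. Concretely, one can begin with the $\mathbb P^1$-bundle $\mathbb P(\cO(-1) \oplus \cO)$ over $\mathbb P(\h)$, which is the $\C^*$-equivariant projective closure of the blowup of $\h$ at the origin, embed $U$ into it, and then apply $\C^*$-equivariant resolution of singularities to the boundary to achieve NCD. Because the action on $\bar U$ extends the scaling action on $U$ and preserves every irreducible component of $D$, the Euler vector field extends to a global section of $T_{\bar U}(-\log D)$, which I continue to denote $\xi$.

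Since $M$ has regular singularities, Deligne's canonical extension yields a locally free coherent sheaf $\bar M$ on $\bar U$ with $\bar M|_U = M$, equipped with a logarithmic connection $\bar\nabla\colon \bar M \to \bar M \otimes \Omega^1_{\bar U}(\log D)$ extending the given connection. Pairing $\bar\nabla$ with $\xi$ produces a $\C$-linear endomorphism $\bar\nabla_\xi\colon \bar M \to \bar M$ whose restriction to $U$ is the action of $\xi$ on $M$ coming from the $\D$-module structure. This is where regular singularities is essential: RS is precisely the hypothesis allowing the coherent extension $\bar M$ to be preserved by $\xi$, which is tangent to but does not vanish on $D$. (Without RS, an operator such as $\xi = z\partial_z$ on $\C^*$ acting on an irregular connection can produce poles at every order, as one sees on $\nabla_{\partial_z} = \partial_z + z^{-2}$.)

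Finally, for each $k \geq 0$ the sheaf $\bar M(kD) := \bar M \otimes \cO_{\bar U}(kD)$ is coherent on the projective variety $\bar U$, so $\Gamma(\bar U, \bar M(kD))$ is finite-dimensional over $\C$. The operator $\bar\nabla_\xi$ preserves $\bar M(kD)$, since $\xi$ preserves $\cO_{\bar U}(kD)$ (being tangent to $D$) and the Leibniz rule gives $\bar\nabla_\xi(m \otimes f) = \bar\nabla_\xi(m) \otimes f + m \otimes \xi(f)$. Hence $\bar\nabla_\xi$ acts on the finite-dimensional space $\Gamma(\bar U, \bar M(kD))$. Any rational section of the locally free sheaf $\bar M$ with poles only along $D$ lies in some $\bar M(kD)$, so $\Gamma(U, M) = \bigcup_{k \geq 0} \Gamma(\bar U, \bar M(kD))$, and every global section of $M$ is contained in a finite-dimensional $\xi$-invariant subspace. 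The main technical obstacle is verifying the foundational inputs—the existence of the $\C^*$-equivariant NCD compactification and the compatibility of Deligne's extension with the induced $\C^*$-action; both rest on standard results, but the compatibility check (that $\xi$ is genuinely tangent to every boundary component introduced by the resolution) deserves some care.
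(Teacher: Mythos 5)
Your proof is correct, but it takes a genuinely different route from the paper's. The paper's argument works by shrinking $\h\setminus Z$ to a $\C^*$-invariant affine open subset $U$ disjoint from the zero set of one coordinate $x_0$, then using the coordinate change $x_0\mapsto t$, $x_i\mapsto ty_i$ to identify $U$ $\C^*$-equivariantly with $\C^*\times Y$, so that $\xi$ becomes $t\partial_t$. It then reduces to the irreducible case and applies Riemann--Hilbert on $\C^*\times Y$: an irreducible $\cO$-coherent RS $\D$-module there is an external tensor product $L\otimes N$ with $L$ rank one on $\C^*$ explicitly of the form $\C[t,t^{-1}]v$ with $\nabla_{\partial_t}v=\lambda t^{-1}v$, from which local finiteness of $t\partial_t$ is immediate by inspection. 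Your approach instead compactifies equivariantly to a smooth projective $\bar U$ with NCD boundary, takes Deligne's canonical extension $\bar M$, and observes that the Euler field is a logarithmic vector field so that $\bar\nabla_\xi$ preserves the exhaustion $\Gamma(U,M)=\bigcup_k\Gamma(\bar U,\bar M(kD))$ by finite-dimensional spaces. What the paper's route buys is economy: no resolution of singularities, no canonical extension, just an explicit coordinate change and the well-known structure of RS connections on $\C^*$. What your route buys is that it avoids the Riemann--Hilbert classification entirely and is visibly insensitive to the details of the $\C^*$-action (it works for any $\mathbb{G}_m$-action on a quasi-projective $U$ whose generating vector field one wishes to make act locally finitely), at the cost of invoking equivariant resolution of singularities and Deligne's extension theorem. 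Your own flagged concern — that the new boundary components introduced by resolution are $\C^*$-invariant so that $\xi$ remains tangent to $D$ — is indeed the only point needing care, and it is settled by functoriality of the resolution algorithm with respect to the $\C^*$-action.
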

\begin{proof}
Let $\cM$ be an $\cO$-coherent $\D$-module on $\h\setminus Z$ with regular singularities. Then $\cM$ is locally free, so if $U$ is an open subset of $\h\setminus Z$, the restriction map
\[
	\Gamma(\h\setminus Z,M)\rightarrow\Gamma(U,M)
\]
is injective. We may therefore replace $\h\setminus Z$ by any smaller $\C^*$-invariant open subset $U$. Denoting by $x_0,\ldots,x_n$ the coordinates on $\h$, we suppose that $U$ is affine and disjoint 
with the zero set of $x_0$. We have an isomorphism
\[
	\C[\h][x_0^{-1}]\isom\C[t,t^{-1}]\otimes\C[y_1,\ldots,y_n]
\]
given by $x_0\mapsto t$ and $x_i\mapsto ty_i$ for $i>0$. Note the $\C^*$ action on the left, which scales each $x_i$, corresponds to the $\C^*$ action on the right scaling only $t$. Thus we have a 
$\C^*$-equivariant isomorphism
\[
	U\cong\C^*\times Y
\]
where $Y$ is some affine open subset of $\Spec\C[y_1,\ldots,y_n]$. In particular the vector field $\xi$ on the left corresponds to $t\partial_t$ on $\C^*$.

It therefore suffices to consider a module $M$ over $D(\C^*\times Y)=D(\C^*)\otimes_\C D(Y)$ which is $\cO$-coherent with regular singularities. Moreover we may suppose that 
$M$ is irreducible. The Riemann-Hilbert correspondence \cite{Deligne} implies that $M$ is of the form $L\otimes_\C N$ for some irreducible modules $L\in D(\C^*)$-mod and $N\in D(Y)$-mod, and that 
$L=\C[t,t^{-1}]v$ with connection
\[
	\nabla_{\partial_t}f(t,t^{-1})v=(\partial_tf(t,t^{-1}))v+\lambda t^{-1}f(t,t^{-1})v
\]
for some $\lambda\in\C$. Since $t\partial_t$ acts as $n+\lambda$ on $t^nv$, and $\{t^nv\mid n\in\Z\}$ is a basis for $L$, we are done.
\end{proof}
\begin{proof}[Proof of Proposition \ref{RS}]
First we show each $L(\tau)$ lies in $H_c\rs$. Choose $Y\in P_{L(\tau)}$, and let $i_Y:Y\hookrightarrow X$ denote the inclusion. We are required to show that the connection on $i_Y^*Sh(L(\tau))$ has regular singularities. 
Certainly we have a surjection $i_Y^*Sh(M(\tau))\twoheadrightarrow i_Y^*Sh(L(\tau))$ intertwining the connections, so it suffices to prove that the connection on $i_Y^*Sh(M(\tau))$ has regular singularities. However,
\[
	\Gamma(Y,i_Y^*Sh(M(\tau)))\cong\cO(Y)\otimes_{\C[\h]}\C[\h]\otimes_\C\tau=\cO(Y)\otimes_\C\tau.
\]
Moreover $\tau$ is killed by $\h$, so by Proposition \ref{flatconn}, the connection on $i_Y^*Sh(M)$ is described by
\[
	\nabla_ym=-\sum_{s\in S\setminus W'}c(s)\langle y,\alpha_s\rangle\frac{2}{1-\lambda_s}\frac{1}{\alpha_s}(s-1)m
\]
for $m\in\tau$ and $y\in\hwp$, where $W'$ is the stabiliser of any point in $Y$. Since this expression only contains poles of first order, the connection has regular singularities. Since $H_c\rs$ is a Serre 
subcategory of $H_c\coh$ by definition, this proves any module satisfying condtion (3) above lies in $H_c\rs$.

Conversely, we will show that any $M\in H_c\rs$ satisfies condition (1) above. This condition is preserved by extensions, and Theorem \ref{coh}(5) shows that $M$ has finite length, so we may suppose $M$ is 
irreducible. As usual let $Y$ be in $P_M$ with stabiliser $W'$. We may find a homogeneous polynomial $f\in\C[\h]$ which vanishes on each $Y'\in P_M\setminus\{Y\}$, but not on $Y$ itself. Note that the 
kernel of the map
\[
	M\rightarrow\bigoplus_{w\in W}M[(\act[w]f)^{-1}]
\]
is $\Gamma_Z(M)$, where $Z$ is the common zero set of all the $\act[w]f$. This is zero by Theorem \ref{coh}(1) and the irreducibility of $M$. Since $\ad\eu$ acts as $\deg f$ on each $\act[w]f$, the action 
of $\eu$ on $M$ extends naturally to one on $M[(\act[w]f)^{-1}]$. Therefore since $\eu$ is $W$-invariant, it suffices to show that the action of $\eu$ on $M[f^{-1}]$ is locally finite.

Let $U\subseteq\h$ denote the affine open subset on which $f$ is nonzero, and let $I_Y\subseteq\C[\h][f^{-1}]$ denote the ideal vanishing on $U\cap Y$. Note that $I_Y$ is generated by some subspace 
$V\subseteq\h^*$, so $I_Y^kM[f^{-1}]$ is invariant under $\eu$ for each $k$. Moreover since $M[f^{-1}]$ is supported on $U\cap Y$, we have $I_Y^KM[f^{-1}]=0$ for some $K>0$. Therefore 
it suffices to show that $\eu$ acts locally finitely on each $I_Y^kM[f^{-1}]/I_Y^{k+1}M[f^{-1}]$. Finally for each $k$ we have a surjective map
\[
	V^{\otimes k}\otimes_\C M[f^{-1}]/I_YM[f^{-1}]\twoheadrightarrow I_Y^kM[f^{-1}]/I_Y^{k+1}M[f^{-1}]
\]
intertwining $1\otimes\eu$ with $\eu-k\deg f$. We may therefore consider just $k=0$. But
\[
	M[f^{-1}]/I_YM[f^{-1}]=\cO(U\cap Y)\otimes_{\C[\h]}M
\]
is an $\cO$-coherent $\D$-module on $U\cap Y$ with connection given by Proposition \ref{flatconn}. It has regular singularities by assumption. Note that $U\cap Y$ is the basic open subset of the vector space $\h^{W'}$ 
on which $f|_{\h^{W'}}$ is nonzero. Therefore the vector field $\xi$ of Lemma \ref{monodromic} acts locally finitely. To describe the action of $\xi$ explicitly, let $\{x_1,\ldots,x_n\}$ and $\{y_1,\ldots,y_n\}$ be dual bases for 
$\h^*$ and $\h$, such that $\{y_1,\ldots,y_r\}$ span $\h^{W'}$. Then $x_{r+1},\ldots,x_n$ are zero in $\cO(U\cap Y)$, and a straighforward calculation shows that
\[
	\xi m=\eu\,m+\sum_{s\in S\setminus W'}\frac{2c(s)}{1-\lambda_s}m+\sum_{s\in S\cap W'}\frac{2c(s)}{1-\lambda_s}sm
\]
for $m\in M$. Note that $\cO(U\cap Y)\otimes_{\C[\h]}M$ admits an action of $W'$ commuting with $\cO(U\cap Y)$. Since $\xi$ and $\eu$ have the same commutator with any element of $\cO(U\cap Y)$, 
this formula holds for any $m\in\cO(U\cap Y)\otimes_{\C[\h]}M$. Therefore since $\xi$ and $\C[W']$ act locally finitely, so does $\eu$, as required.
\end{proof}
\subsection{Characters of modules}
Let $\ozt$ denote the space of formal $\Z$-linear combinations of powers of $t$, such that the exponents appearing belong to $A+\Z_{\geq0}$ for some finite subset $A\subseteq\C$. Let $K(\wfd)$ denote the 
Grothendieck group of the category of finite dimensional representations of $W$. There is a homomorphism
\[
	Ch:K(H_c(W,\h)\rs)\rightarrow K(\wfd)\otimes_\Z\ozt
\]
sending $[M]$ to
\[
	\sum_{\lambda\in\C}[M_\lambda]t^\lambda,
\]
where $M_\lambda\subseteq M$ is the generalised $\lambda$-eigenspace of $\eu$, considered as a $W$-module. If $\tau$ is an irreducible $W$-module, then
\[
	Ch([M(\tau)]),\,Ch([L(\tau)])\in[\tau]t^{h(\tau)}+K(\wfd)\otimes\Z[[t]]t^{h(\tau)+1},
\]
where $h(\tau)$ is the scalar by which
\[
	\sum_{s\in S}c(s)\frac{2}{\lambda_s-1}s
\]
acts on $\tau$. It follows that $Ch$ is injective, and both $\{[L(\tau)]\}$ and $\{[M(\tau)]\}$ form bases for $K(H_c(W,\h)\rs)$. Moreover the matrix relating the $[M(\tau)]$ to $[L(\tau)]$ is upper triangular, when the 
irreducibles are ordered by $\re(h(\tau))$. It follows that the functor
\begin{eqnarray*}
	Verma_W:\wfd&\rightarrow&H_c(W,\h)\rs,\\
	\tau&\mapsto&M(\tau)
\end{eqnarray*}
induces an isomorphism on Grothendieck groups.
\subsection{Induction and restriction}
Bezrukavnikov and Etingof \cite{BE} also construct ``parabolic induction and restriction" functors for rational Cherednik algebras. Statements (1-4) of the following theorem summarise Propositions 3.9, 3.10 and 3.14 of 
\cite{BE}. Statement (5) follows from the construction of $Res_b$ and Proposition 2.21 of \cite{GGOR}, and was communicated to the author by Etingof.
\begin{thm}\label{resind}
Consider a point $b\in\h$, with stabiliser $W'\subseteq W$. There exist exact functors $Res_b:H_c(W,\h)\rs\rightarrow H_c(W',\h)\rs$ and $Ind_b:H_c(W',\h)\rs\rightarrow H_c(W,\h)\rs$ with the 
following properties:
\begin{enumerate}
\item The functor $Ind_b$ is right adjoint to $Res_b$.
\item The support of $Res_b(M)$ is the union of the components of $\supp M$ passing through $b$.
\item The support of $Ind_b(N)$ is the union of $W$-translates of $\supp N$.
\item
	The induced maps $[Res_b]$ and $[Ind_b]$ on Grothendieck groups satisfy
	\begin{eqnarray*}
		[Res_b][Verma_W]&=&[Verma_{W'}][Res],\\{}
		[Ind_b][Verma_{W'}]&=&[Verma_W][Ind],
	\end{eqnarray*}
	where $Res:\wfd\rightarrow\wfd[']$ and $Ind:\wfd[']\rightarrow\wfd$ are the usual restriction and induction functors.
\item
	If $M$ is a Verma module, then $Res_bM$ has a filtration whose successive quotients are Verma modules.
\end{enumerate}
\end{thm}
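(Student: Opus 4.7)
The plan is to follow the approach of Bezrukavnikov--Etingof. The starting point is Proposition \ref{linearH} applied in the linear setting with $X=\h$ and $Z=\{b\}$: writing $W'=\stab_W(b)$, it gives an algebra isomorphism
\[
\hat\cO_{\h,Wb}\otimes_{\C[\h]}H_c(W,\h)\cong\Mat_{[W:W']}\!\left(\C[W']\otimes_{\C[W']}\widehat{H_c(W',\h)}_0\right),
\]
after identifying the formal--scheme algebra $H_c(W',\Spf\hat\cO_{\h,0})$ with the completion of $H_c(W',\h)$ at the origin (via the change of coordinates sending $b$ to $0$, and using the linearity of the $W'$-action near $b$). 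I would then define $Res_b(M)$ by first forming the completion $\hat M=\hat\cO_{\h,Wb}\otimes_{\C[\h]}M$, cutting out the $(1,1)$ matrix corner $e\hat M$ to land in modules over $\widehat{H_c(W',\h)}_0$, and finally taking the direct sum of generalised $\eu$-eigenspaces. The last step is well defined on $\rs$ because, by Proposition \ref{RS}, $\eu$ acts locally finitely there, and it returns a genuine $H_c(W',\h)$-module rather than a completed one. The functor $Ind_b$ is defined as the right adjoint, realised by going in the opposite direction through the same Morita equivalence.

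With these definitions, (1)--(3) are essentially formal. The adjunction in (1) reduces to the standard adjunction between restriction and tensor product for the matrix-corner Morita equivalence, together with the fact that extracting $\eu$-finite vectors is adjoint to completion on $\rs$. The support statements (2) and (3) follow because completion at $Wb$ detects precisely the components of the support meeting $b$ (respectively its $W$-orbit), and the $\eu$-finiteness step reconstructs such a module in $\rs$ from the completion.

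For (4), one computes directly on Verma modules. Since $M(\tau)=\C[\h]\otimes\tau$ is $\C[\h]$-free, its completion at $Wb$ decomposes as
\[
\hat M(\tau)=\bigoplus_{w\in W/W'}\hat\cO_{\h,wb}\otimes\tau.
\]
The $(1,1)$-corner gives $\hat\cO_{\h,b}\otimes\tau$, on which the action of $\h\subset H_c(W',\h)$, written in coordinates centred at $b$, agrees with the Dunkl--Opdam action on a Verma module for $H_c(W',\h)$ built from $\Res_W^{W'}\tau$, up to correction terms coming from reflections in $W\setminus W'$, which are regular near $b$. Extracting $\eu$-finite vectors then reproduces the class $[Verma_{W'}][Res][\tau]$ in the Grothendieck group, giving the first identity of (4); the second follows by adjunction.

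The main obstacle is (5), which demands an honest filtration rather than just a Grothendieck-group identity. The identification above does not exhibit $Res_bM(\tau)$ as a direct sum of Verma modules, but it does equip it with a natural decreasing filtration coming from powers of the maximal ideal of $b$ in $\C[\h]$ (equivalently, from the $\eu$-filtration), whose associated graded is the Verma module built on the $W'$-module $\Res_W^{W'}\tau$. Refining this via a decomposition of $\Res_W^{W'}\tau$ into $W'$-irreducibles (available by Maschke's theorem) produces a filtration with Verma-module subquotients. The delicate point is that the full $H_c(W',\h)$-action, not merely the $\C[W']\ltimes\C[\h]$-action, must coincide with the Verma action on each graded piece, and verifying this requires controlling the regular remainder terms in the Dunkl--Opdam operators furnished by the Morita identification; this is essentially the content of Proposition 2.21 of \cite{GGOR}, which I would invoke to conclude.
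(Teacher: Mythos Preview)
Your proposal is correct and follows essentially the same approach as the paper, which does not prove this theorem directly but cites Bezrukavnikov--Etingof \cite{BE} for (1)--(4) and invokes Proposition 2.21 of \cite{GGOR} for (5); your sketch reconstructs precisely that Bezrukavnikov--Etingof construction (completion at the orbit, Morita corner, extraction of $\eu$-finite vectors) and correctly identifies the GGOR input needed for the filtration in (5). One minor point: the paper's Remark following the theorem notes that its $Res_b$ omits the shift from $b$ to the origin, whereas your sketch includes it, but this is an equivalence and does not affect the stated properties.
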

\begin{rem}\end{rem}
We will use a definition of $Res_b$ that differs slightly from that in \cite{BE} in two ways. Firstly, in the notation of \cite{BE}, we do not include $\zeta$, so $Res_b$ produces modules in $H_c(W',\h)\rs$ 
rather than $H_c(W',\h/\h^{W'})\rs$. Secondly, Theorem 3.2 of \cite{BE} includes a shift from $b$ to the origin, which we omit. In both cases the functor we have omitted is an equivalence, so the 
properties of $Res_b$ are unchanged.

By its construction, $Res_b$ behaves well with respect to monodromy. We give two results to this effect. The first, concerning monodromy around a single hyperplane, follows easily from \cite{BE}.
\begin{prop}\label{resmon}
Suppose $b,\,v\in\h$ have stabilisers $W',\,W''\subseteq W$ respectively. Suppose that $W''\subseteq W'$, and that $C\subseteq W'$ is a subgroup acting faithfully on $\C v$. Consider the map 
$\phi:\C\rightarrow\h$ given by $\phi(z)=b+zv$. There is a Zariski open subset $U\subseteq\C^*$ such that $\phi$ maps $U$ into $Y=\h^{W''}_{\rm reg}$. Let $i_Y:Y\hookrightarrow\h$ denote the inclusion. 
For any $M\in H_c(W,\h)\rs$, the $\D$-modules $i_Y^*Sh(M)$ and $i_Y^*Sh(Res_bM)$ satisfy
\[
	\C((z))\otimes_{\cO_U}\phi|_U^*i_Y^*Sh(M)\cong\C((z))\otimes_{\cO_U}\phi|_U^*i_Y^*Sh(Res_bM)
\]
as $\C[C]\ltimes\C((z))[\partial_z]$-modules. In particular, the monodromies about the origin of the $\D$-modules $\phi|_U^*i_Y^*Sh(M)$ and $\phi|_U^*i_Y^*Sh(Res_bM)$ are conjugate, 
and the same is true when these equivariant $\D$-modules are pushed down to $U/C$.
\end{prop}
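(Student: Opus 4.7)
The plan is to reduce the claim to the formal disk at $z=0$ and to exploit the defining property of $Res_b$ from \cite{BE}. Since the $\C((z))$-germ of a $\D$-module at the origin determines its local monodromy (and the equivariant $\D$-module structure relevant for the $C$-quotient statement), it suffices to establish the asserted isomorphism of $\C[C]\ltimes\C((z))[\partial_z]$-modules; the monodromy statement will then follow.

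First, I would compute both sides via completion. Because $\phi(0)=b$, the pullback $\phi|_U^*i_Y^*Sh(M)$, completed at $z=0$, factors through $\hat\cO_{\h,b}$:
\[
	\C[[z]]\otimes_{\cO_U}\phi|_U^*i_Y^*Sh(M)\cong\C[[z]]\otimes_{\hat\cO_{\h,b}}\hat M_b,
\]
and similarly the pullback of $Res_bM$ along $\psi:z\mapsto zv$ (i.e. $\phi$ translated so that $b\to 0$) gives $\C[[z]]\otimes_{\hat\cO_{\h,0}}\widehat{(Res_bM)}_0$. The construction of $Res_b$ in \cite{BE} is arranged so that translation by $-b$ induces a $W'$-equivariant ring isomorphism $\hat\cO_{\h,b}\cong\hat\cO_{\h,0}$ under which $\hat M_b$ corresponds to $\widehat{(Res_bM)}_0$ as modules over the completed Cherednik algebra $\widehat{H_c(W',\h)}_0$. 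This identification is essentially the content of Proposition~\ref{linearH} applied to $Z=\{b\}$ together with the subsequent BE construction. Since the line $\phi$ becomes $\psi$ under this translation, and both are stable under $C\subseteq W'$, the identification is $C$-equivariant and yields the required isomorphism of underlying modules over $\C[C]\ltimes\C((z))$.

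The step that requires care is matching the $\D$-module structures, i.e.\ the action of $\partial_z$. Pushing $\partial_z$ forward by $\phi$ (resp.\ $\psi$) gives the vector field $v\in\h^{W''}$, so the connection is computed by the Dunkl--Opdam operator of Proposition~\ref{flatconn}. For $M$ the sum runs over all $s\in S\setminus W''$, whereas for $Res_bM$ it runs only over the reflections in $W'\setminus W''$; the difference is the operator
\[
	\sum_{s\in S\setminus W'}c(s)\langle v,\alpha_s\rangle\frac{2}{1-\lambda_s}\frac{1}{\alpha_s}(s-1).
\]
For $s\notin W'$ one has $\alpha_s(b)\neq0$, so $\alpha_s^{-1}$ pulls back to an element of $\C[[z]]$ and the entire difference is a regular operator with coefficients in $\C[W']\ltimes\C[[z]]$. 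Consequently the two connections differ by an element of $\C[C]\ltimes\C[[z]]$, which after tensoring with $\C((z))$ can be absorbed as a gauge transformation, so the two $\C((z))[\partial_z]$-modules become isomorphic.

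The main obstacle I anticipate is bookkeeping: extracting the precise naturality and $C$-equivariance of the isomorphism $\hat M_b\cong\widehat{(Res_bM)}_0$ from the construction in \cite{BE}, and verifying that the gauge transformation above indeed intertwines the two actions (rather than merely transforming one formula into the other symbolically). Once that is in place, the passage to the $U/C$ quotient is immediate, since everything has been carried out $C$-equivariantly, and conjugate monodromy matrices are an automatic consequence of the isomorphism of formal $\D$-modules.
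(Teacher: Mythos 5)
Your reduction to the formal disk at $b$ and appeal to the defining property of $Res_b$ is the right strategy (the paper gives no explicit proof here, but this is exactly the mechanism used for the companion Proposition~\ref{resmonglobal}). However, the final step has a genuine gap. You claim that the two connections differ by a regular operator, which ``after tensoring with $\C((z))$ can be absorbed as a gauge transformation.'' That implication is false in general: two connections on a $\C((z))$-vector space whose connection matrices differ by a $\C[[z]]$-valued term need not have conjugate monodromy once the residue has integer eigenvalue differences. For instance $\partial_z+z^{-1}\mathrm{diag}(0,1)$ has trivial monodromy, while $\partial_z+z^{-1}\mathrm{diag}(0,1)+\left(\begin{smallmatrix}0&1\\0&0\end{smallmatrix}\right)$ has nontrivial unipotent monodromy. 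So the gauge-transformation step cannot be invoked without more work, and you have (correctly, but without resolving it) flagged this as the point requiring care.

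The resolution is that the ``difference'' you computed is not the true one: you have treated the action of $y\in\h$ as a generator of $H_c(W,\h)$ and its action as a generator of $H_c(W',\Spf\hO_{\h,b})$ as the same operator on $\hO_{\h,b}\otimes M$, but they are not. Under the identification coming from Proposition~\ref{linearH} (which underlies the definition of $Res_b$), the $W'$-action is
\[
	\rho'(y)(1\otimes m)=1\otimes ym-\sum_{s\in S\setminus W'}\frac{2c(s)}{1-\lambda_s}\langle y,\alpha_s\rangle\frac1{\alpha_s}\otimes(s-1)m,
\]
as displayed in the proof of Proposition~\ref{resmonglobal}. Substituting this into the $W'$-connection formula of Proposition~\ref{flatconn} and using $S\setminus W''=(S\setminus W')\sqcup\bigl((S\cap W')\setminus W''\bigr)$, the correction term you identified cancels exactly against the shift in the $y$-action. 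So the two connections on $i_Y^*Sh(M)$ and $i_Y^*Sh(Res_bM)$ agree on the nose under the canonical ($C$-equivariant) identification; they are equal, not merely gauge equivalent. Replacing your gauge-transformation step by this exact cancellation closes the gap, and the monodromy statements then follow immediately.
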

To prove the next result, we required the following simple algebraic geometry lemma.
\begin{lemm}\label{cohnonaff}
Suppose $U$ is an open subset (not necessarily affine) of an affine reduced Noetherian scheme $\Spec A$. Suppose $\cM$ is a coherent sheaf on $U$ such that the support of any nonzero $m\in\Gamma(U,\cM)$ contains some irreducible 
component of $U$. Finally suppose $\Gamma(U\cap Z,\cO_Z)$ is finitely generated over $A$ for each irreducible component $Z$ of $\Spec A$. Then $\Gamma(U,\cM)$ is finitely generated over $A$.
\end{lemm}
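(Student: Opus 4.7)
The plan is to inject $\Gamma(U,\cM)$ into a finitely generated $A$-module, whence Noetherianity of $A$ gives finite generation. The key steps are: reduce to torsion-free $\cM$ using the support hypothesis; embed $\cM \hookrightarrow \cO_U^d$ for some $d$ via a coherent extension to $\Spec A$; and bound $\Gamma(U,\cO_U)$ using the hypothesis on the $B_i = \Gamma(U \cap Z_i, \cO_{Z_i})$.

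First I would show that the support hypothesis forces the torsion subsheaf $\cT \subseteq \cM$ (sections killed by a non-zero-divisor of $A$) to have no nonzero global sections: any such section vanishes at every generic point $\eta_i$ of an irreducible component of $U$ (since $\cO_{U,\eta_i}$ is a field), so its support contains no component, contradicting the hypothesis. Hence $\Gamma(U,\cM) \hookrightarrow \Gamma(U,\cM/\cT)$, and I may replace $\cM$ by $\cM/\cT$ to assume $\cM$ is torsion-free.

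Next, I would extend $\cM$ to a coherent sheaf on $\Spec A$ (possible on Noetherian schemes) and quotient by its torsion subsheaf, which vanishes on $U$ by the torsion-freeness of $\cM$, to obtain a coherent torsion-free extension $\bar\cM$. Then $\bar M = \Gamma(\Spec A, \bar\cM)$ is finitely generated and torsion-free over the reduced Noetherian ring $A$; since the total ring of fractions of $A$ is $K = \prod_i K_i$ (a finite product of fraction fields of the components), one has $\bar M \hookrightarrow \prod_i K_i^{d_i} \hookrightarrow K^d$ for $d = \max_i d_i$. Clearing denominators by a single non-zero-divisor $s \in A$ gives an injection $\bar M \cong s\bar M \hookrightarrow A^d$, which globalizes to $\bar\cM \hookrightarrow \cO_{\Spec A}^d$; restricting to $U$ and taking global sections produces $\Gamma(U,\cM) \hookrightarrow \Gamma(U,\cO_U)^d$.

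Finally, reducedness of $A$ yields a stalkwise injection $\cO_U \hookrightarrow \bigoplus_i (\iota_i)_* \cO_{Y_i}$, where $\iota_i\colon Y_i = U \cap Z_i \hookrightarrow U$ is the closed immersion (the kernel at each stalk is the intersection of the minimal primes there, which vanishes by reducedness). Taking sections yields $\Gamma(U,\cO_U) \hookrightarrow \bigoplus_i B_i$, which is finitely generated over $A$ by hypothesis. Thus $\Gamma(U,\cM)$ embeds into the finitely generated module $\bigl(\bigoplus_i B_i\bigr)^d$ and is itself finitely generated. I expect the main subtlety to lie in the extension step: ensuring that $\bar\cM$ can be chosen torsion-free on $\Spec A$ while still restricting to $\cM$ on $U$; this comes down to observing that the torsion of any coherent extension is supported away from $U$ once $\cM$ itself is torsion-free there.
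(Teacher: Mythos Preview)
Your proof is correct and follows essentially the same approach as the paper: extend $\cM$ to a coherent sheaf on $\Spec A$, use the support hypothesis to pass to a torsion-free module, embed that into a finite direct sum of the $\cO(\overline{Y_i})$ (or, in your version, into $A^d$ and then decompose $\cO_U$ by components), and conclude via the hypothesis on the $\Gamma(U\cap Z_i,\cO_{Z_i})$ together with Noetherianity of $A$. The only cosmetic difference is that the paper clears denominators component-by-component to land directly in $\bigoplus_i \cO(\overline{Y_i})$, whereas you first embed in $A^d$ and then split by components at the sheaf level; both routes yield the same submodule-of-a-finite-module conclusion.
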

\begin{proof}
By Exercise II.5.15 of \cite{Hartshorne}, there is a finitely generated $A$-module $N$ such that $\cM\cong Sh(N)|_U$. Let $P$ denote the set of irreducible components of $U$. For $Y\in P$, let $\overline Y$ denote the closure of $Y$ 
in $\Spec A$. This is an irreducible component of $\Spec A$, and is therefore an integral affine Noetherian scheme with the reduced induced scheme structure. Let $K(Y)$ be the field of fractions of $\cO(\overline Y)$, and let
\[
	K=\bigoplus_{Y\in P}K(Y).
\]
Let $N'$ be the kernel of $N\rightarrow K\otimes_AN$. If $n\in N'$, then the support of $n$ does not contain any $Y\in P$. Therefore the image of $n$ in $\Gamma(U,Sh(N))=\Gamma(U,\cM)$ is zero, so $n$ is supported on 
$\Spec A\setminus U$. Thus
\[
	Sh(N/N')|_U\cong Sh(N)|_U\cong\cM.
\]
Now $K\otimes_AN$ is finitely generated over $K$, so it is a direct sum of finite dimensional vector spaces over the $K(Y)$. We may write
\[
	K\otimes_AN=\bigoplus_{i=1}^mK(Y_i)x_i,
\]
where $x_i\in K\otimes_AN$ and $Y_i\in P$ (where $Y_i$ may equal $Y_j$ for $i\neq j$). Since $N$ is finitely generated over $A$, we may choose the $x_i$ so that
\[
	N/N'\subseteq\bigoplus_{i=1}^m\cO(\overline{Y_i})x_i.
\]
Therefore
\[
	\Gamma(U,\cM)=\Gamma(U,Sh(N/N'))\subseteq\bigoplus_{i=1}^m\Gamma(Y_i,\cO_{\overline{Y_i}}).
\]
By assumption, the right hand side is finitely generated over $A$. Since $A$ was assumed to be Noetherian, the result follows.
\end{proof}
We can now prove our second result relating $Res_b$ and monodromy.
\begin{prop}\label{resmonglobal}
Suppose $b\in\h$ with $\stab_W(b)=W'$. Suppose $M\in H_c(W,\h)\rs$ is scheme theoretically supported on $\supp M$. Also suppose that for each $s\in S\setminus W'$, and for each subgroup $W''\subseteq W'$ for 
which $\h^{W''}$ is an irreducible component of $\supp M$, the $\D$-module $M|_{\hr^{W''}}$ has trivial monodromy around $Z(\alpha_s)\cap\hr^{W''}$. Then for each such $W''$, the $N_{W'}(W'')$-equivariant $\D$-module 
$M|_{\hr^{W''}}$ is exactly the restriction of the $N_{W'}(W'')$-equivariant $\D$-module $Res_bM|_{\hbr^{W''}}$, where
\[
	\hbr^{W''}=\{y\in\h\mid\stab_{W'}(y)=W''\}.
\]
\end{prop}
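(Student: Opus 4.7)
The plan is to produce an isomorphism of $N_{W'}(W'')$-equivariant $\D$-modules on $\hbr^{W''}$ by first extending $M|_{\hr^{W''}}$ across the extra hyperplanes and then matching this extension with $(Res_bM)|_{\hbr^{W''}}$ via Riemann--Hilbert, using Proposition \ref{resmon} as the local input at $b$.

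First I would observe that $\hbr^{W''}$ is the complement in $\h^{W''}$ of the hyperplanes $Z(\alpha_s)\cap\h^{W''}$ for $s\in(S\cap W')\setminus W''$, and that $\hr^{W''}$ is obtained from $\hbr^{W''}$ by further removing the hyperplanes coming from $s\in S\setminus W'$. Applying Theorem \ref{coh}(2) to $Res_bM\in H_c(W',\h)\rs$ for the subgroup $W''\subseteq W'$, the pullback $(Res_bM)|_{\hbr^{W''}}$ is an $N_{W'}(W'')$-equivariant locally free $\D$-module on $\hbr^{W''}$, since its connection involves only reflections in $W'$ and hence has no singularities along the additional hyperplanes. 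On the other side, $M|_{\hr^{W''}}$ is a locally free $\D$-module with regular singularities, and the trivial-monodromy hypothesis around each missing hyperplane combined with Deligne's extension theorem supplies a unique extension $\tilde M$ to a locally free $N_{W'}(W'')$-equivariant $\D$-module on $\hbr^{W''}$.

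Next I would show that $\tilde M\cong(Res_bM)|_{\hbr^{W''}}$ as $N_{W'}(W'')$-equivariant $\D$-modules. By the Riemann--Hilbert correspondence on the smooth variety $\hbr^{W''}$, it is enough to match their monodromy representations. The fundamental group of a connected component of $\hbr^{W''}$ is generated by small loops around the boundary hyperplanes $Z(\alpha_s)\cap\h^{W''}$ for $s\in(S\cap W')\setminus W''$, and each such hyperplane passes through $b$ because $s\in W'=\stab_W(b)$. For each such $s$ one may choose $v\in\h^{W''}$ generic so that $\phi(z)=b+zv$ meets $Z(\alpha_s)$ transversely at $z=0$; Proposition \ref{resmon} applied with $C=\langle s\rangle$ then identifies the $\C((z))[\partial_z]$-modules obtained by pulling $M$ and $Res_bM$ back along $\phi$, so the monodromies about the corresponding loop coincide. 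Lemma \ref{cohnonaff} provides the coherence of global sections needed to upgrade this loop-by-loop matching to a genuine isomorphism of coherent sheaves with connection, and an equivariant refinement is obtained by averaging over the finite group $N_{W'}(W'')$. Restriction to $\hr^{W''}$ then gives the asserted identification.

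The main obstacle is translating the essentially formal-local data of Proposition \ref{resmon} into a global, equivariant $\D$-module isomorphism on $\hbr^{W''}$. Two delicate points require attention: verifying that the Deligne extension $\tilde M$ is well-defined (handled by the trivial-monodromy hypothesis and the regular singularities of $M$), and converting matching monodromies into an actual $\D$-module isomorphism rather than merely conjugacy of representations. The second point is where Lemma \ref{cohnonaff} enters, ensuring global sections of the relevant sheaves are finitely generated over $\cO(\h^{W''})$ so that the formal-local identifications can be assembled globally.
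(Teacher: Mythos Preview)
Your approach has a genuine gap at the monodromy-matching step, and the paper's proof proceeds quite differently.

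The first problem is your invocation of Proposition~\ref{resmon}. That proposition compares the formal pullbacks along a line $\phi(z)=b+zv$, and $\phi(0)=b$ lies on \emph{every} hyperplane $Z(\alpha_s)$ with $s\in W'$, since $W'=\stab_W(b)$. So for no choice of $v\in\h^{W''}$ does $\phi$ meet a single hyperplane $Z(\alpha_s)$ transversely at $z=0$; it meets all of them simultaneously. The monodromy around $z=0$ is therefore the image of a complicated loop in $\pi_1(\hr^{W''})$, not a small loop around one divisor, and Proposition~\ref{resmon} does not isolate individual hyperplane monodromies in the way you claim.

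The second problem is more structural. Even if you could show that the monodromy of $\tilde M$ and of $(Res_bM)|_{\hbr^{W''}}$ around each generating loop are separately conjugate, this does not yield an isomorphism of monodromy representations: two representations of $\pi_1$ can agree up to conjugacy on every generator yet be non-isomorphic. Lemma~\ref{cohnonaff} is a finite-generation statement and says nothing about gluing local conjugacies into a global isomorphism; your appeal to it at that point is misplaced.

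The paper avoids both issues by never attempting a monodromy comparison. Instead, it works on the open set $U=\{y\in\h\mid\stab_W(y)\subseteq W'\}$, where the action of $H_c(W,\h)$ on $M$ naturally induces an action of $H_c(W',U)$ on $\cO(U)\otimes_{\cO(\h)}M$. Using the trivial-monodromy hypothesis, it patches this module with the Deligne extensions along each $\hbr^{W''}$ to produce a single coherent sheaf $\cN$ on an open set of $Z=\bigcup\h^{W''}$ whose complement has codimension $\ge 2$; Lemma~\ref{cohnonaff} then shows $N=\Gamma(\cN)$ is finitely generated, hence lies in $H_c(W',\h)\rs$. Since $N$ agrees with $M$ after localisation to $U\ni b$, the completions at $b$ coincide, and by the very definition of $Res_b$ (via completion) one gets $Res_bM\cong N$. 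The desired identification of $\D$-modules on $\hbr^{W''}$ is then immediate from how $N$ was built, with no Riemann--Hilbert argument required.
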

\begin{proof}
Let
\[
	U=\{y\in\h\mid\stab_W(y)\subseteq W'\}.
\]
In particular, if $s\in S\setminus W'$ then $\alpha_s$ is nonzero on $U$. The action $\rho:H_c(W,\h)\rightarrow\End_\C(M)$ naturally induces an action
\[
	\rho':H_c(W',U)\rightarrow\End_\C(\cO(U)\otimes_{\cO(\h)}M)
\]
such that
\[
	\rho'(y)(1\otimes m)=1\otimes ym-\sum_{s\in S\setminus W'}\frac{2c(s)}{1-\lambda_s}\langle y,\alpha_s\rangle\frac1{\alpha_s}\otimes(s-1)m
\]
for $y\in\h$ and $m\in M$. According to Proposition \ref{flatconn} and the above construction, $M$ and $\cO(U)\otimes_{\cO(\h)}M$ induce the same $N_{W'}(W'')$-equivariant $\D$-modules on $\hr^{W''}$. We will 
construct an $N\in H_c(W',\h)\rs$ such that
\[
	\cO(U)\otimes_{\cO(\h)}N\cong\cO(U)\otimes_{\cO(\h)}M
\]
as $H_c(W',U)$-modules. This implies that
\[
	\hO_{\h,b}\otimes_{\cO(\h)}N\cong\hO_{\h,b}\otimes_{\cO(U)}(\cO(U)\otimes_{\cO(\h)}M)
\]
as $H_c(W',\Spf\hO_{\h,b})$-modules. By definition of $Res_b$, this ensures that $Res_bM\cong N$. Since $\cO(U)\otimes_{\cO(\h)}N\cong\cO(U)\otimes_{\cO(\h)}M$, the $N_{W'}(W'')$-equivariant $\D$-modules on $\hr^{W''}$ 
induced by $\cO(U)\otimes_{\cO(\h)}M$ is the restriction of that on $\hbr$ induced by $N$. The result follows.

It remains to construct $N$. Let $P$ be the set of subgroups $W''$ of $W'$ for which $\h^{W''}$ is an irreducible component of $\supp M$. Let
\[
	Z=\bigcup_{W''\in P}\h^{W''}.
\]
Any component of $\supp M$ not contained in $Z$ is contained in $\h\setminus U$, so the support of $\cO(U)\otimes_{\cO(\h)}M$ is $Z\cap U$. By (1) of Theorem \ref{coh}, we may replace $M$ by a quotient of $M$ and suppose that the 
support of any nonzero $m\in\Gamma(U,Sh(M))$ intersects $\hr^{W''}$ for some $W''\in P$. Since $M|_{\hr^{W''}}$ is locally free, any such support would then contain $\hr^{W''}$.

By assumption, if $W''\in P$ then the coherent $\D$-module $M|_{\hr^{W''}}$ has trivial monodromy around each hyperplane in $\hbr^{W''}\setminus\hr^{W''}$. Therefore there is a coherent $D$-module $N(W'')$ on 
$\hbr^{W''}$ whose restriction to $\hr^{W''}$ is $M|_{\hr^{W''}}$. Patching these with $\cO(U)\otimes_{\cO(\h)}M$, we obtain a coherent sheaf $\cN$ on the open subset
\[
	U'=(U\cap Z)\cup\bigcup_{W''\in P}\hbr^{W''}
\]
of $Z$. The support of any nonzero $n\in N=\Gamma(U',\cN)$ contains $\hr^{W''}$ for some $W''\in P$, and therefore contains some component of $U'$. Moreover for $W''\in P$,
\[
	\h^{W''}\setminus U'=\bigcup_{w_1\in W\setminus W'\atop w_2\in W'\setminus W''}\{y\in\h^{W''}\mid w_1y=y=w_2y\}
\]
has codimension at least 2 in $\h^{W''}$. We may therefore apply Lemma \ref{cohnonaff} to conclude that $N$ is finitely generated over $\cO(Z)$. We may identify $N$ with the submodule of $\cO(U)\otimes_{\cO(\h)}M$ consisting 
of elements whose restriction to $\hr^{W''}$ has no pole around $Z(\alpha_s)$, for each $W''\in P$ and $s\in S\setminus W''$. This condition is preserved by the action of $H_c(W',\h)$, so $N\in H_c(W',\h)\rs$. Finally
\[
	\cO(U)\otimes_{\cO(\h)}N=\Gamma(U\cap Z,\cN)=\cO(U)\otimes_{\cO(\h)}M,
\]
as required.
\end{proof}
\subsection{Type {A}}
Now let $W=S_n$, the symmetric group $S_n$ on $n$ letters, acting on $\h=\C^n$ by permuting coordinates. The reflections in this case are transpositions. As they are all conjugate, the function 
$c:S\rightarrow\C$ must be constant, and we identify it with its value in $\C$. Let $\h/\C$ denote the quotient of $\h$ by the line fixed by $W$. In this case we have the following simple criterion for when 
$H_c(W,\h/\C)$ admits a finite dimensional representation.
\begin{thm}[\cite{BEG2} Theorem 1.2]\label{finite}
Suppose $n>1$. The algebra $H_c(W,\h/\C)$ admits a nonzero finite dimensional representation if and only if $c=\frac rn$ for some integer $r$ coprime with $n$. In this case the category of finite dimensional modules is semisimple 
with one irreducible. Moreover if $c=\frac1n$, this irreducible is one dimensional.
\end{thm}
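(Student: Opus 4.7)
The plan is to reduce the statement to an analysis of the Shapovalov contravariant form on Verma modules together with an explicit construction of finite-dimensional quotients via the polynomial representation.

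First, I would show that every irreducible finite-dimensional $H_c(S_n,\h/\C)$-module is some $L(\tau)$. Given such a module $V$, its lowest $\eu$-eigenspace is nonzero by finite dimensionality and is annihilated by $\h$, because $[\eu,y]=-y$ forces $\h$ to strictly lower $\eu$-weight. This lowest eigenspace is $S_n$-stable and hence contains an irreducible $S_n$-module $\tau$ killed by $\h$; by the universal property of $M(\tau)$ this yields a surjection $M(\tau)\twoheadrightarrow V$, so $V\cong L(\tau)$. Thus one must classify the $\tau$ for which $L(\tau)$ is finite dimensional.

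Second, I would use the Shapovalov-type contravariant form $B_\tau$ on $M(\tau)\cong\C[\h/\C]\otimes\tau$, whose radical is exactly $J(\tau)$. Since each graded component of $M(\tau)$ is finite dimensional, $\dim L(\tau)<\infty$ is equivalent to $B_\tau$ having full radical in all sufficiently high degrees. The Dunkl--Opdam determinant formula expresses the degree-$d$ determinant of $B_\tau$ as a product indexed by reflections with explicit linear factors in $c$, and this identifies the reducibility hyperplanes of $M(\tau)$. A careful combinatorial analysis of the product in type A shows that finite-dimensionality of $L(\tau)$ is possible only when $c=r/n$ with $\gcd(r,n)=1$, and then only for $\tau=\mathrm{triv}$ (when $c>0$) or $\tau=\mathrm{sign}$ (when $c<0$).

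Third, for the ``if'' direction I would construct $L(\mathrm{triv})$ explicitly inside the polynomial representation $M(\mathrm{triv})=\C[\h/\C]$. A natural tool is the Heckman shift operator connecting $M(\mathrm{triv})$ at $c$ and $c+1$; starting from the classical case $c=0$ and iterating, one identifies $J(\mathrm{triv})$ at $c=r/n$ with the ideal generated by an explicit regular sequence of $S_n$-invariants, giving $L(\mathrm{triv})$ the structure of a graded complete intersection of total dimension $r^{n-1}$. Semisimplicity of the finite-dimensional category, and uniqueness of the irreducible, then follow from the fact that the central character of any finite-dimensional module (determined by the action of $\eu$ together with the $S_n$-invariants in $\C[\h/\C]$ and $\C[\h^*/\C]$) is rigid at this value of $c$. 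For $c=1/n$ the formula $r^{n-1}$ evaluates to $1$, giving the one-dimensional irreducible.

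The main obstacle will be the second step: extracting from the Dunkl--Opdam determinant formula a tight enough statement to rule out \emph{all} other $(c,\tau)$. The product has many factors and substantial cancellation, so one has to track which reflection contributions can force $B_\tau$ to vanish in cofinite codimension; this is where the combinatorics of partitions of $n$ and hook lengths (specifically, the characterization of $m$-regular partitions used elsewhere in the paper) enter essentially.
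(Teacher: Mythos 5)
The paper does not prove this theorem; it cites it verbatim as Theorem 1.2 of \cite{BEG2} (Berest--Etingof--Ginzburg) and uses it as a black box. So there is no ``paper proof'' to compare against, and the question is whether your blind sketch would actually succeed as a proof.

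Your first step is correct and standard: the lowest $\eu$-eigenspace of a finite-dimensional module is killed by $\h$, so every finite-dimensional irreducible is some $L(\tau)$. Your overall plan — Shapovalov form plus an explicit construction via the polynomial representation — is also in the right family of ideas; the dimension $r^{n-1}$ and the special role of $\tau=\mathrm{triv}$ (or $\mathrm{sign}$, for $c<0$) are the correct answers. But there are two genuine gaps that you acknowledge only partially.

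The second step is the crux and is not a detail you can defer. The Dunkl--Opdam norm formula gives you the reducibility locus of $M(\tau)$, but the theorem requires the much stronger statement that $L(\tau)$ is \emph{finite dimensional} only for $(\tau,c)$ as stated. Reducibility of $M(\tau)$ is a codimension-one condition in $c$ and happens for many partitions at many rational $c$; you need to show the quotient by the radical becomes finite dimensional, which is a cofinite-codimension condition on the graded pieces. Ruling out every $\tau\ne\mathrm{triv},\mathrm{sign}$ and every $c$ with denominator properly dividing $n$ (e.g.\ $c=1/2$ for $n=4$, which gives modules with support a proper cone but not finite dimensional) is exactly where the real work lies, and the norm formula alone, without a substantial induction or a support-theoretic argument like the one in Section 2 of this paper, does not obviously deliver it. As written this is a hole, not a deferred computation.

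The semisimplicity argument is also not complete. ``Rigidity of central character'' does not by itself give $\mathrm{Ext}^1(L,L)=0$: central-character considerations only isolate a block, and a block with one simple object can still be non-semisimple. What one actually needs is either a vanishing result such as Lemma 2.9 of \cite{GGOR} (used elsewhere in this paper to conclude $\mathrm{Ext}^1(L(\C),L(\C))=0$), or the unitarity of $L(\mathrm{triv})$ at $c=r/n$ with $r>0$ (a positive-definite contravariant form splits any self-extension). Either route works, but neither is what you wrote. The actual proof in \cite{BEG2} proceeds via shift operators, quasi-invariants, and the spherical subalgebra rather than a direct Shapovalov determinant analysis, which sidesteps precisely the combinatorial obstacle you flagged.
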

Using this with Theorem \ref{coh}(3) gives the following (see Example 3.25 of \cite{BE}).
\begin{thm}\label{suppSn}
Suppose $c=\frac{r}{m}$, where $r$ and $m$ are integers with $m$ positive and coprime with $r$. For each nonnegative integer $q\leq n/m$, let
\[
	X_q'=\left\{b\in\h\,\left|\,b_i=b_j\text{ whenever }\left\lceil\frac{i}{m}\right\rceil=\left\lceil\frac{j}{m}\right\rceil\leq q\right.\right\}
	\hspace{5mm}\text{and}\hspace{5mm}
	X_q=\bigcup_{w\in W}wX_q'.
\]
Then any module in $H_c(W,\h)\coh$ is supported on one of the $X_q$. If $c$ is irrational then every such module has full support.
\end{thm}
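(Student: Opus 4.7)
The plan is to apply Theorem~\ref{coh}(3) and then identify $P'$ explicitly using the classification of stabilisers and Theorem~\ref{finite}. By Theorem~\ref{coh}(3) applied to any $\cM\in H_c(W,\h)\coh$, we have
\[
\supp\cM=\bigcup_{Y\in P_{\cM}}\overline Y
\]
for some $W$-invariant $P_{\cM}\subseteq P'$. So it suffices to describe $P'$ and check that the closures of $W$-orbits inside $P'$ are exactly the $X_q$ (in the rational case), or just all of $\h$ (in the irrational case).

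For $W=S_n$ acting on $\h=\C^n$, the stabiliser $W'=\stab_W(x)$ of any $x=(b_1,\dots,b_n)$ is a Young subgroup: if the coordinates of $b$ take the distinct values $v_1,\dots,v_k$ with multiplicities $a_1,\dots,a_k$, then $W'\cong S_{a_1}\times\cdots\times S_{a_k}$. Under this decomposition
\[
T_x\h/T_x\h^{W'}\cong\bigoplus_{i=1}^k\C^{a_i}/\C,
\]
each summand being the standard reflection representation of $S_{a_i}$. Hence
\[
H_c(W',T_x\h/T_x\h^{W'})\cong\bigotimes_{i=1}^k H_c(S_{a_i},\C^{a_i}/\C),
\]
and this tensor product admits a nonzero finite dimensional module iff each factor does. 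Theorem~\ref{finite} then pins down exactly which $a_i$ are allowed.

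In the rational case $c=\frac rm$ with $\gcd(r,m)=1$, Theorem~\ref{finite} forces every $a_i$ to be $1$ or $m$, so up to conjugation $W'=S_m^q\times S_1^{n-qm}$ for some $0\leq q\leq n/m$. For such a $W'$, the fixed space $\h^{W'}$ is an affine subspace, and the subset $X^{W'}_{\rm reg}$ of points whose stabiliser is exactly $W'$ is the complement in $\h^{W'}$ of a hyperplane arrangement, hence Zariski dense and irreducible. So $\overline Y=\h^{W'}$ for the unique component $Y$, and $\h^{W'}$ is a $W$-translate of $X_q'$. Summing over the $W$-orbit (guaranteed to lie in $P_{\cM}$ by $W$-invariance) yields $X_q$. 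Therefore $\supp\cM$ is a union of the $X_q$'s; since the definition makes $X_0\supseteq X_1\supseteq X_2\supseteq\cdots$ (adding more equality constraints as $q$ grows), this union is a single $X_{q_{\min}}$.

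For the irrational case, Theorem~\ref{finite} excludes every $a_i>1$, so $W'=\{1\}$ is the only option, forcing $Y=\h^{\{1\}}_{\rm reg}$ (the dense open of points with trivial stabiliser) and hence $\overline Y=\h$; any nonzero $\cM\in H_c\coh$ thus has full support. The only nontrivial step is recognising that Theorem~\ref{finite} applies factor-by-factor to the tensor product $\bigotimes_i H_c(S_{a_i},\C^{a_i}/\C)$, which is routine since a tensor product of algebras has a finite dimensional quotient iff each factor does; the rest is a direct translation between Young-subgroup fixed loci and the sets $X_q$.
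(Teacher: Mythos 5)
Your proof is correct and follows the same route the paper indicates: apply Theorem~\ref{coh}(3), classify stabilisers as Young subgroups, decompose $H_c(W',T_x\h/T_x\h^{W'})$ as a tensor product, and invoke Theorem~\ref{finite} factor-by-factor to pin down the allowed block sizes $a_i\in\{1,m\}$. The paper itself leaves this as a one-line remark ("Using this with Theorem~\ref{coh}(3) gives the following"), and your write-up supplies exactly the intended details, including the nesting $X_0\supseteq X_1\supseteq\cdots$ that collapses the union to a single $X_q$.
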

We would like to determine more explicitly which irreducibles in $H_c\rs$ have which support sets. The irreducible representations of $W=S_n$ are well known to be parameterised by partitions of $n$. 
Given a partition $\lambda\vdash n$, the corresponding irreducible is denoted $\tau_\lambda$ and called the \emph{Specht module} indexed by $\lambda$. We will represent a partition $\lambda\vdash n$ as a 
nonincreasing sequence of nonnegative integers, $\lambda=(\lambda_1,\lambda_2,\ldots,\lambda_k)$, whose sum is $n$, where two sequences are identified if their nonzero entries agree. If $\lambda\vdash n$ 
and $\mu\vdash m$, we may obtain a partition $\lambda+\mu=(\lambda_1+\mu_1,\lambda_2+\mu_2,\ldots)$ of $n+m$. We first prove a lemma about the induction functor introduced in Theorem \ref{resind}.
\begin{lemm}\label{indSn}
Suppose $c$ is a positive real number, and suppose we have partitions $\lambda\vdash n$ and $\mu\vdash m$. Let $b$ be a point in $\C^{n+m}$ whose stabiliser in $S_{n+m}$ is $S_n\times S_m$. Then 
$Ind_b(L(\tau_\lambda)\otimes L(\tau_\mu))$ admits a nonzero map from $M(\tau_{\lambda+\mu})$.
\end{lemm}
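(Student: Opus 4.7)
The plan is to use the adjunction between $Res_b$ and $Ind_b$ (Theorem \ref{resind}(1)) to rewrite the $\Hom$ in question as
\[
\Hom\bigl(M(\tau_{\lambda+\mu}),\,Ind_b(L(\tau_\lambda)\otimes L(\tau_\mu))\bigr)
\cong \Hom_{H_c(W',\h)}\bigl(Res_b M(\tau_{\lambda+\mu}),\,L(\tau_\lambda)\otimes L(\tau_\mu)\bigr),
\]
with $W'=S_n\times S_m$, and then exhibit a nonzero morphism on the right. By Theorem \ref{resind}(5), $Res_b M(\tau_{\lambda+\mu})$ carries a Verma filtration over the parabolic Cherednik algebra $H_c(W',\h)\cong H_c(S_n,\C^n)\otimes H_c(S_m,\C^m)$, with subquotients $M(\tau_{\lambda'})\otimes M(\tau_{\mu'})$. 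By Theorem \ref{resind}(4) their multiplicities are the Littlewood--Richardson coefficients $c^{\lambda+\mu}_{\lambda',\mu'}$ of the branching $\tau_{\lambda+\mu}|_{S_n\times S_m}$. Since $(\lambda+\mu)/\lambda$ is the horizontal strip consisting of the $\mu_i$ rightmost cells of row $i$, it admits a unique LR filling of content $\mu$ (namely filling row $i$ with all $i$'s), so $c^{\lambda+\mu}_{\lambda,\mu}=1$ and $M(\tau_\lambda)\otimes M(\tau_\mu)$ appears in the filtration exactly once.

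To show this particular Verma arises as a \emph{quotient}, I note that each subquotient is generated at parabolic Euler eigenvalue
\[
h'(\tau_{\lambda'}\otimes\tau_{\mu'}) = -c\bigl(\mathrm{content}(\lambda')+\mathrm{content}(\mu')\bigr),
\]
where $\mathrm{content}(\kappa)=\sum_{(i,j)\in\kappa}(j-i)$ is the scalar by which the sum of all transpositions acts on $\tau_\kappa$. Since $c>0$, maximising $h'$ amounts to minimising the content sum, and I claim that $(\lambda,\mu)$ achieves this minimum over all pairs with $c^{\lambda+\mu}_{\lambda',\mu'}>0$. This combinatorial assertion follows from the LR rule together with the strict monotonicity of $\mathrm{content}$ under the dominance order and the standard dominance constraints on partitions occurring in the LR expansion of $\tau_{\lambda+\mu}|_{S_n\times S_m}$. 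Granting this, a direct Ext analysis shows that $\mathrm{Ext}^1(M(\tau),M(\tau'))\neq 0$ requires $h(\tau)>h(\tau')$: in a proposed non-split extension $0\to M(\tau')\to E\to M(\tau)\to 0$, applying $\h$ to a lift of the generator of $M(\tau)$ lands in $M(\tau')$ at Euler degree $h(\tau)-1$, so if $h(\tau')\geq h(\tau)$ this lift is already annihilated by $\h$ and produces a splitting. Consequently any adjacent pair of steps in the Verma flag violating the $h'$-order can be swapped, and by iteration one arranges the filtration with $h'$ weakly increasing from bottom to top, placing $M(\tau_\lambda)\otimes M(\tau_\mu)$ at the top (any Vermas tied with it at maximal $h'$ split off as direct summands).

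Composing the resulting surjection $Res_b M(\tau_{\lambda+\mu})\twoheadrightarrow M(\tau_\lambda)\otimes M(\tau_\mu)$ with the head projection $M(\tau_\lambda)\otimes M(\tau_\mu)\twoheadrightarrow L(\tau_\lambda)\otimes L(\tau_\mu)$ gives a nonzero homomorphism, which the adjunction transports back to the desired nonzero map $M(\tau_{\lambda+\mu})\to Ind_b(L(\tau_\lambda)\otimes L(\tau_\mu))$. The principal obstacle is the content extremality of $(\lambda,\mu)$: verifying that $c^{\lambda+\mu}_{\lambda',\mu'}>0$ forces $\mathrm{content}(\lambda')+\mathrm{content}(\mu')\geq\mathrm{content}(\lambda)+\mathrm{content}(\mu)$. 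This is essentially a statement about how the row-additive structure of $\lambda+\mu$ interacts with the LR rule, and should follow from careful dominance-content bookkeeping of the cells moved when passing from $(\lambda,\mu)$ to a different LR-compatible pair $(\lambda',\mu')$.
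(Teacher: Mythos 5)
Your reduction via the adjunction and the attempt to place $M(\tau_\lambda)\otimes M(\tau_\mu)$ at the top of the Verma flag of $Res_b M(\tau_{\lambda+\mu})$ is an appealing idea, and the Ext-vanishing lemma you use to reorder the flag is correct: one checks, as you say, that $\mathrm{Ext}^1(M(\tau),M(\tau'))\neq 0$ forces $h(\tau)-h(\tau')\in\Z_{\geq1}$. However, the whole argument hinges on the combinatorial assertion that $(\lambda,\mu)$ has minimal content sum (equivalently maximal $h'$) among pairs $(\lambda',\mu')$ with $c^{\lambda+\mu}_{\lambda',\mu'}>0$, and this is \emph{false}. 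Take $\lambda=(1,1)$, $\mu=(3)$, so $\lambda+\mu=(4,1)$ and $\mathrm{content}(\lambda)+\mathrm{content}(\mu)=-1+3=2$. By Pieri's rule $c^{(4,1)}_{(2),(2,1)}=1$, yet $\mathrm{content}((2))+\mathrm{content}((2,1))=1+0=1<2$. Thus in the Verma flag of $Res_b M(\tau_{(4,1)})$ over $H_c(S_2\times S_3,\C^5)$ the subquotient $M(\tau_{(2)}\otimes\tau_{(2,1)})$ has $h'$-value $-c$, strictly larger than the $h'$-value $-2c$ of $M(\tau_{(1,1)}\otimes\tau_{(3)})$; for $c=1$ the difference is exactly $1\in\Z_{\geq1}$, so the Ext obstruction does not vanish and you cannot bubble the desired Verma to the top. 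The claimed route via ``dominance constraints on the LR expansion'' cannot succeed: the standard dominance constraint is $\nu\leq\lambda'+\mu'$ (the maximal term of $s_{\lambda'}s_{\mu'}$ is $s_{\lambda'+\mu'}$), and since content is superadditive rather than additive ($\mathrm{content}(\alpha+\beta)=\mathrm{content}(\alpha)+\mathrm{content}(\beta)+\sum_i\alpha_i\beta_i$), this gives no lower bound on $\mathrm{content}(\lambda')+\mathrm{content}(\mu')$.

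Note the asymmetry with the paper's actual proof, which works with the \emph{induction} direction. There the relevant extremality is that $\alpha\leq\lambda+\mu$ for every $\alpha$ with $c^{\alpha}_{\lambda\mu}>0$ (a genuine, classical consequence of the LR rule), which combined with the strict monotonicity of $h$ under dominance gives the lowest-order term of $Ch[Ind_b(M(\tau_\lambda)\otimes M(\tau_\mu))]$. The paper then bootstraps to $Ind_b(L(\tau_\lambda)\otimes L(\tau_\mu))$ by a positivity argument on characters, and reads off the required $\C[S_{n+m}]\ltimes\C[\h^*]$-map $\tau_{\lambda+\mu}\hookrightarrow Ind_b(L(\tau_\lambda)\otimes L(\tau_\mu))$ directly from the bottom $\eu$-eigenspace, never needing to identify $L(\tau_\lambda)\otimes L(\tau_\mu)$ as a quotient of $Res_b M(\tau_{\lambda+\mu})$. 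In short, the extremality you need fails on the restriction side, while the analogous extremality holds on the induction side; switching to the induction side and using characters is precisely how the paper sidesteps the difficulty you've run into.
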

\begin{proof}
We first prove

\textbf{Claim 1}: the lowest order term in $Ch[Ind_b(M(\tau_\lambda)\otimes M(\tau_\mu))]$ is $[\tau_{\lambda+\mu}]t^{h(\tau_{\lambda+\mu})}$.

Indeed, the construction of Verma modules shows that $M(\tau_\lambda)\otimes M(\tau_\mu)\cong M(\tau_\lambda\otimes\tau_\mu)$, so Theorem \ref{resind}(4) implies that
\[
	Ch[Ind_b(M(\tau_\lambda)\otimes M(\tau_\mu))]=Ch[M(Ind(\tau_\lambda\otimes\tau_\mu))].
\]
The Littlewood-Richardson rule describes how $Ind(\tau_\lambda\otimes\tau_\mu)$ splits into Specht modules for $S_{n+m}$. In particular, given partitions $\alpha$ and $\beta$ of $n+m$, say $\alpha$ \emph{dominates} 
$\beta$, denoted $\alpha\geq\beta$, if
\[
	\sum_{i=1}^p\alpha_i\geq\sum_{i=1}^p\beta_i
\]
for all $p\geq1$. Then
\[
	[Ind(\tau_\lambda\otimes\tau_\mu)]=[\tau_{\lambda+\mu}]+\sum_{\nu\leq\lambda+\mu}c^\nu_{\lambda\mu}[\tau_\nu]
\]
for some coefficients $c^\nu_{\lambda\mu}\in\Z$. Certainly $Ch[M(\tau_{\lambda+\mu})]$ has the required lowest order term, so it suffices to prove that $h(\tau_\alpha)<h(\tau_\beta)$ whenever $\alpha>\beta$. Recall 
that $h(\tau_\alpha)$ is the action of
\[
	-c\sum_{i\neq j}s_{ij}
\]
on $S(\alpha)$. By the Frobenious character formula (see Exercise 4.17(c) of \cite{FultonHarris}), this is
\[
	h(\tau_\alpha)=-c\sum_{i\geq1}\frac{1}{2}\alpha_i^2-\left(i-\frac{1}{2}\right)\alpha_i.
\]
Using the Abel summation formula,
\begin{eqnarray*}
	h(\tau_\alpha)-h(\tau_\beta)
		&=&-c\sum_{i\geq1}(\alpha_i-\beta_i)\left(\frac{\alpha_i+\beta_i+1}{2}-i\right)\\
		&=&-c\sum_{i\geq1}\left(\sum_{j=1}^i(\alpha_j-\beta_j)\right)
			\left(\frac{\alpha_i-\alpha_{i+1}+\beta_i-\beta_{i+1}}{2}+1\right)\\
		&<&0,
\end{eqnarray*}
if $\alpha>\beta$. Here we have used that $\sum_{j=1}^i(\alpha_j-\beta_j)=0$ for $i$ sufficiently large. This proves Claim 1. From this we will deduce

\textbf{Claim 2}: the lowest order term in $Ch[Ind_b(L(\tau_\lambda)\otimes M(\tau_\mu))]$ is $[\tau_{\lambda+\mu}]t^{h(\tau_{\lambda+\mu})}$.

We prove Claim 2 by descending induction on $h(\lambda)$; that is, suppose it holds for all pairs $(\nu,\mu)$ with $h(\nu)>h(\lambda)$. Of course this assumption is vacuous when $h(\lambda)$ 
is maximal, so we need not prove the base case. We have
\[
	[M(\tau_\lambda)]=[L(\tau_\lambda)]+\sum_{h(\nu)>h(\lambda)}d_\lambda^\nu[L(\tau_\nu)]
\]
for some nonnegative integers $d_\lambda^\nu$. Tensoring by $M(\tau_\mu)$ and applying $Ind_b$ and $Ch$, we obtain
\begin{eqnarray*}
	Ch[Ind_b(M(\tau_\lambda)\otimes M(\tau_\mu))]\hspace{-30mm}\\
		&=&Ch[Ind_b(L(\tau_\lambda)\otimes M(\tau_\mu))]+\sum_{h(\nu)>h(\lambda)}d_\lambda^\nu Ch[Ind_b(L(\tau_\nu)\otimes M(\tau_\mu))].
\end{eqnarray*}
By Claim 1, the lowest order term in this expression is $[\tau_{\lambda+\mu}]t^{h(\tau_{\lambda+\mu})}$. However, each summand on the right has nonnegative integer coefficients. Therefore this 
must be the lowest order term of one of the summands. For $h(\nu)>h(\lambda)$, the lowest order term of $Ch[Ind_b(L(\tau_\nu)\otimes M(\tau_\mu))]$ is $[\tau_{\nu+\mu}]t^{h(\tau_{\nu+\mu})}$ by 
induction. 
Clearly if $\nu\neq\lambda$ then $\nu+\mu\neq\lambda+\mu$, so the term $[\tau_{\lambda+\mu}]t^{h(\tau_{\lambda+\mu})}$ can only come from $Ch[Ind_b(L(\tau_\lambda)\otimes M(\tau_\mu))]$, as required. We 
now conclude

\textbf{Claim 3}: the lowest order term in $Ch[Ind_b(L(\tau_\lambda)\otimes L(\tau_\mu))]$ is $[\tau_{\lambda+\mu}]t^{h(\tau_{\lambda+\mu})}$.

Indeed this follows from Claim 2 using the same argument by which Claim 2 followed from Claim 1. This proves that the $h(\tau_{\lambda+\mu})$-eigenspace of $\eu$ in $Ind_b(L(\tau_\lambda)\otimes L(\tau_\mu))$ 
is isomorphic to $\tau_{\lambda+\mu}$ as an $S_{n+m}$-module, and is killed by $\h\subseteq H_c(S_{n+m},\h)$. We therefore have a nonzero $\C[S_{n+m}]\ltimes\C[\h^*]$-module homomorphism
\[
	\tau_{\lambda+\mu}\rightarrow Ind_b(L(\tau_\lambda)\otimes L(\tau_\mu)),
\]
where $\h$ is defined to act as $0$ on $\tau_{\lambda+\mu}$. By definition of the Verma module, we obtain a nonzero map
\[
	M(\tau_{\lambda+\mu})\rightarrow Ind_b(L(\tau_\lambda)\otimes L(\tau_\mu)),
\]
as required.
\end{proof}
Now suppose $c=\frac rm$ where $r$ is coprime with $m$. We say $\lambda$ is \emph{$m$-regular} if no part of $\lambda$ appears $m$ or more times. Any partition $\lambda\vdash n$ can be written uniquely as $\lambda=m\mu+\nu$ 
such that $\nu'$ is $m$-regular. Let $q_m(\lambda)=|\mu|$. More explicitly
\[
	q_m(\lambda)=\sum_{i\geq1}i\left\lfloor\frac{\lambda_i-\lambda_{i+1}}{m}\right\rfloor.
\]
We will eventually show, in Theorem \ref{suppSnpart}, that the support of $L(\tau_\lambda)$ is $X_{q_m(\lambda)}$ for $c>0$ and $X_{q_m(\lambda')}$ for $c<0$. For the moment we prove one direction:
\begin{thm}\label{suppSnpartle}
With $c=\frac rm$ and $\h=\C^n$ as above, the support of the $H_c$-module $L(\tau_\lambda)$ is contained in $X_{q_m(\lambda)}$ if $c>0$, and contained in $X_{q_m(\lambda')}$ if $c<0$.
\end{thm}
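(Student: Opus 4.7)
The plan is to induct on $n = |\lambda|$, using Lemma~\ref{indSn} as the main tool. First, I would reduce the case $c<0$ to $c>0$: tensoring modules with the sign character of $S_n$ gives an equivalence between $H_c\rs$ and $H_{-c}\rs$ sending $L(\tau_\lambda)$ to $L(\tau_{\lambda'})$, so the $c<0$ statement follows from the $c>0$ statement applied to $\lambda'$.

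For $c = r/m > 0$, I use the canonical decomposition $\lambda = m\mu + \nu$ with $\nu'$ $m$-regular. Direct computation shows $q_m(m\mu) = |\mu| = q_m(\lambda)$, and $\nu'$ being $m$-regular is equivalent to $\nu_i - \nu_{i+1} < m$ for all $i$, i.e.\ to $q_m(\nu) = 0$. When $\nu \neq 0$, Lemma~\ref{indSn} applied with $\alpha = m\mu$ and $\beta = \nu$ realises $L(\tau_\lambda)$ as a composition factor of $Ind_b(L(\tau_{m\mu}) \otimes L(\tau_\nu))$ (with $b$ of stabiliser $S_{m|\mu|}\times S_{|\nu|}$); the inductive hypothesis gives $\supp L(\tau_{m\mu}) \subseteq X_{|\mu|}$ and $\supp L(\tau_\nu) \subseteq X_0$, and Theorem~\ref{resind}(3) combines these into $\supp L(\tau_\lambda) \subseteq X_{q_m(\lambda)}$. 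When $\nu = 0$, I further decompose $\mu = \mu^1 + \mu^2$ element-wise into proper partitions (for instance by taking $\mu^1$ to be a column of height equal to the length of $\mu$), and apply Lemma~\ref{indSn} with $\alpha = m\mu^1$ and $\beta = m\mu^2$; using $q_m(m\mu^i) = |\mu^i|$, induction finishes this sub-case whenever $\mu$ is not itself a column.

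The genuinely stuck case is $\mu = (1^\ell)$, i.e.\ $\lambda = (m^\ell)$, which I would handle by an auxiliary induction on $\ell$. The base case $\ell = 1$ asserts that $L(\tau_{(m)})$ has support $\h^{S_m} = X_1$: Theorem~\ref{finite} provides a nonzero finite-dimensional irrep of $H_c(S_m, \h/\C)$, and for $c > 0$ this module is the simple quotient of the polynomial representation $M(\tau_{(m)})$, which after inflating to $H_c(S_m, \h)$ is precisely $L(\tau_{(m)})$. For the step $\ell - 1 \to \ell$, the goal is to show that $L(\tau_{(m^\ell)})$ appears as a composition factor of $Ind_b(L(\tau_{(m^{\ell-1})}) \otimes L(\tau_{(m)}))$ (where $b$ has stabiliser $S_{m(\ell-1)} \times S_m$), whose support is contained in $X_\ell$ by Theorem~\ref{resind}(3) and the two inductive hypotheses.

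The main obstacle will be this last step, since Lemma~\ref{indSn} directly produces only $L(\tau_{(2m, m^{\ell-2})})$ as a composition factor (corresponding to the element-wise sum $(m^{\ell-1}) + (m)$). To also extract $L(\tau_{(m^\ell)})$, I would appeal to Frobenius reciprocity (Theorem~\ref{resind}(1)) together with the Verma filtration of $Res_b M(\tau_{(m^\ell)})$ provided by Theorem~\ref{resind}(5): because the Littlewood-Richardson coefficient $c^{(m^\ell)}_{(m^{\ell-1}),(m)}$ equals $1$, the Verma $M(\tau_{(m^{\ell-1})} \otimes \tau_{(m)})$ appears as a subquotient of $Res_b M(\tau_{(m^\ell)})$; tracking $\eu$-weights (by an analogue of Claim~3 in the proof of Lemma~\ref{indSn}) positions it as a quotient of the filtration, supplying a surjection onto $L(\tau_{(m^{\ell-1})}) \otimes L(\tau_{(m)})$, and adjunction then gives the required nonzero morphism $M(\tau_{(m^\ell)}) \to Ind_b(L(\tau_{(m^{\ell-1})}) \otimes L(\tau_{(m)}))$ whose image has $L(\tau_{(m^\ell)})$ as a simple quotient.
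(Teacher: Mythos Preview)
Your reduction of $c<0$ to $c>0$ and your use of Lemma~\ref{indSn} for the step $\lambda = m\mu + \nu$ with $\nu \neq 0$ match the paper's argument. The paper, however, does \emph{not} attempt to prove the case $\lambda = m\mu$ by induction: it simply invokes an external result (Proposition~9.7 and Theorem~9.8 of \cite{CEE}) which already gives $\supp L(\tau_{m\mu}) \subseteq X_{|\mu|}$ for all $\mu$ at once. Your further decomposition $\mu = \mu^1 + \mu^2$ and the auxiliary induction on $\ell$ for the column $\mu = (1^\ell)$ are an attempt to replace that citation by a self-contained argument.

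The gap is in the inductive step $\ell-1 \to \ell$ of your column case. You assert that tracking $\eu$-weights positions $M(\tau_{(m^{\ell-1})}\otimes\tau_{(m)})$ as a \emph{quotient} of the Verma filtration of $Res_b M(\tau_{(m^\ell)})$, but the direction is reversed. For any pair $(\alpha,\beta)$ occurring in $Res^{S_{m\ell}}_{S_{m(\ell-1)}\times S_m}\tau_{(m^\ell)}$ one has $h(\tau_\alpha\otimes\tau_\beta) = -c\bigl(\text{content}(\alpha)+\text{content}(\beta)\bigr)$, and both content sums are maximised precisely at $\alpha=(m^{\ell-1})$, $\beta=(m)$; hence this pair has the \emph{lowest} $h$-value among all subquotients. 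The analogue of Claim~3 in Lemma~\ref{indSn} then says that the lowest $\eu$-eigenspace of $Res_b M(\tau_{(m^\ell)})$ is $\tau_{(m^{\ell-1})}\otimes\tau_{(m)}$, which yields a map \emph{from} $M(\tau_{(m^{\ell-1})}\otimes\tau_{(m)})$ \emph{into} $Res_b M(\tau_{(m^\ell)})$, placing it at the bottom of the filtration. There is no reason for $L(\tau_{(m^{\ell-1})}\otimes\tau_{(m)})$ to lie in the head, so the adjunction does not produce the map $M(\tau_{(m^\ell)}) \to Ind_b\bigl(L(\tau_{(m^{\ell-1})})\otimes L(\tau_{(m)})\bigr)$ you need. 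The paper sidesteps this entirely by quoting \cite{CEE} for the rectangular (indeed for all $m\mu$) case.
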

\begin{proof}
We denote $X_q$ by $X_q^n$ throughout this proof, as we will be considering support sets for other values of $n$. Suppose $c>0$.

Let $q=q_m(\lambda)$, and write $\lambda=m\mu+\nu$ for some partitions $\mu\vdash q$ and $\nu\vdash n-qm$, such that $q_m(\nu)=0$. By Proposition 9.7 and Theorem 9.8 of \cite{CEE}, 
the support of $L(\tau_{m\mu})$ is contained in $X_q^{qm}$. Thus the support of $L(\tau_{m\mu})\otimes L(\tau_\nu)$ is contained in $X_q^{qm}\times\C^{n-qm}\subseteq X_q^n$. By Theorem \ref{resind}(3), 
the same is true of $Ind_b(L(\tau_{m\mu})\otimes L(\tau_\nu))$, where $b\in\C^n$ is a point whose stabiliser in $S_n$ is $S_{qm}\times S_{n-qm}$. By Lemma \ref{indSn}, we have a nonzero map
\[
	\phi:M(\tau_\lambda)\rightarrow Ind_b(L(\tau_{m\mu})\otimes L(\tau_\nu)).
\]
Now $L(\tau_\lambda)$ is the only irreducible quotient of $M(\tau_\lambda)$, so the image of $\phi$ admits $L(\tau_\lambda)$ as a quotient. Thus $L(\tau_\lambda)$ is a subquotient of 
$Ind_b(L(\tau_{m\mu})\otimes L(\tau_\nu))$, so its support must be contained in $X_q^n$. This proves the $c>0$ case.

There is an automorphism of $\C[W]$ sending $s\in S$ to $-s$. Twisting by this automorphism sends $\tau_\lambda$ to $\tau_{\lambda'}$. Moreover it extends to an isomorphism $H_c(W,\h)\rightarrow H_{-c}(W,\h)$, which is the 
identity on $\h$ and $\h^*$. Therefore the statement for $c<0$ follows from that for $c>0$.
\end{proof}
Finally we will require the following classification of two-sided ideals in $H_c(W,\h)$, due to Losev.
\begin{thm}[\cite{Losev} Theorem 4.3.1 and \cite{Stoica} Theorem 5.10]\label{twosided}
There are $\lfloor n/m\rfloor+1$ proper two-sided ideals of $H_c(W,\h)$,
\[
	0=J_0\subsetneq J_1\subsetneq\ldots\subsetneq J_{\lfloor n/m\rfloor}\subsetneq H_c(W,\h).
\]
Moreover if $c>0$ then the polynomial representation admits a filtration
\[
	0=I_0\subsetneq I_1\subsetneq\ldots\subsetneq I_{\lfloor n/m\rfloor}\subsetneq\C[\h]
\]
such that $Z(I_q)=X_q$ and $\Ann_{H_c(W,\h)}(\C[\h]/I_q)=J_q$. Each $I_q$ is radical if and only if $c=\frac1m$.
\end{thm}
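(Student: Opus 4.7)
I would separate this theorem into three assertions: (a) the lattice of two-sided ideals of $H_c(W,\h)$ is a chain of length $\lfloor n/m\rfloor+1$; (b) there is a filtration $0=I_0\subsetneq\cdots\subsetneq I_{\lfloor n/m\rfloor}\subsetneq\C[\h]$ of the polynomial representation with $Z(I_q)=X_q$ and $\Ann_{H_c(W,\h)}(\C[\h]/I_q)=J_q$; (c) the $I_q$ are radical precisely when $c=1/m$.

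For (a), the natural approach is via the associated graded algebra $\gr H_c(W,\h)\cong\C[W]\ltimes\C[\h\oplus\h^*]$: any two-sided ideal $J$ has an associated graded $\gr J$ which is a $W$-invariant Poisson ideal of $\C[\h\oplus\h^*]^W$, whose zero locus $V(J)\subseteq(\h\oplus\h^*)/W$ is a closed Poisson subvariety. For $W=S_n$ and $c$ of denominator $m$, the Poisson-closed $W$-invariant subvarieties form a chain indexed by the stabiliser strata $S_m^q$ ($q=0,\ldots,\lfloor n/m\rfloor$), so there are at most $\lfloor n/m\rfloor+1$ possible values of $V(J)$. To realise each of them I would apply the Bezrukavnikov-Etingof restriction functor at a point of stabiliser $S_m^q$ and invoke Theorem \ref{finite}: the existence of a nonzero finite-dimensional module for $H_c(S_m,\C^m/\C)^{\otimes q}$ produces a nonzero ideal whose associated variety is exactly the closure of the $q$-th stratum. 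Uniqueness of the ideal in each stratum is the deep input, requiring Losev's theory of Harish-Chandra bimodules to show that $V$ is injective on the ideal lattice.

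For (b), I would define $I_q$ to be the largest $H_c$-submodule of $\C[\h]$ set-theoretically supported on $\overline X_q$; this is a submodule by Theorem \ref{coh}(1). Strictness of the chain amounts to exhibiting, for each $q$, a composition factor of $\C[\h]/I_q$ with support exactly $\overline X_q$, which for $c>0$ follows from a decomposition of the Verma module $M(\tau_{(n)})=\C[\h]$ into simples $L(\tau_\lambda)$ combined with the support inclusion of Theorem \ref{suppSnpartle}. The identification $\Ann(\C[\h]/I_q)=J_q$ is then forced by (a), since this annihilator has associated variety the closure of the $q$-th stratum, and (a) says this determines the ideal. For (c), I would analyse $I_q$ in the formal neighbourhood of a generic point of $X_q\setminus X_{q+1}$. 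By Proposition \ref{linearH}, that neighbourhood is linear, and $\C[\h]/I_q$ localised there becomes a free module over the transverse coordinate ring tensored with the finite-dimensional $H_c(S_m,\C^m/\C)^{\otimes q}$-module. Thus radicality of $I_q$ reduces to the one-dimensionality of this module, which by Theorem \ref{finite} holds exactly when $c=1/m$.

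\textbf{Main obstacle.} The principal difficulty is the injectivity of the map $J\mapsto V(J)$ in step (a): showing that the associated variety is a complete invariant of two-sided ideals of $H_c(S_n,\C^n)$. This is a genuine representation-theoretic input, combining Losev's Harish-Chandra bimodule machinery with the uniqueness (up to isomorphism) of the finite-dimensional representation of $H_c(S_m,\C^m/\C)$; without it one obtains only the existence of an ideal for each stratum rather than the bijection claimed.
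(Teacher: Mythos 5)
The paper does not prove Theorem \ref{twosided} at all: it is imported wholesale from Losev (\cite{Losev} Theorem 4.3.1) and Etingof--Stoica (\cite{Stoica} Theorem 5.10), exactly as the bracketed attribution indicates, so there is no internal proof to compare your plan against. Your outline does track the strategy of those two cited papers reasonably faithfully --- associated-graded/Poisson ideals and Harish--Chandra bimodules for the ideal lattice (Losev), restriction to a slice and the finite-dimensional representation of $H_c(S_m,\C^m/\C)$ for realising each stratum and for the radicality dichotomy (Etingof--Stoica) --- and you correctly identify that the bijectivity of $J\mapsto V(J)$ is the genuinely hard input.

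Two points in step (b) would fail as written, however. First, $\C[\h]$ is torsion-free over itself, so the ``largest $H_c$-submodule of $\C[\h]$ set-theoretically supported on $\overline{X_q}$'' (i.e.\ $\Gamma_{X_q}(\C[\h])$ in the sense of Theorem \ref{coh}(1)) is zero; what you want is the largest submodule $J$ such that $\C[\h]/J$ is supported on $X_q$, equivalently the largest ideal with $Z(J)\subseteq X_q$, and then one still has to argue that $Z(I_q)$ is all of $X_q$ rather than something smaller. Second, there is a circularity risk in your argument for strictness of the chain: you appeal to a decomposition of $\C[\h]=M(\tau_{(n)})$ into simples $L(\tau_\lambda)$ ``combined with the support inclusion of Theorem \ref{suppSnpartle}.'' But Theorem \ref{suppSnpartle} only gives the containment $\supp L(\tau_\lambda)\subseteq X_{q_m(\lambda)}$, which is useless for exhibiting a composition factor whose support is \emph{exactly} $X_q$; the equality is Theorem \ref{suppSnpart}, and the paper proves that \emph{later} using Theorem \ref{twosided}. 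To make strictness self-contained one must instead produce, for each $q$, an ideal $I_q$ with $Z(I_q)=X_q$ directly --- Etingof and Stoica do this via an explicit generating set for $I_q$ (``wheel'' polynomials) and a norm computation --- rather than inferring it from the supports of simples in $\cO$.
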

\section{Minimal Support for Type A}\label{Sec:minsupp}
In this section we consider the algebra $H_c=H_\frac{1}{m}(S_n,\C^n)$, and study modules in $H_c\rs$ whose support is the smallest possible set, namely $X_q$ where 
$q=\lfloor n/m\rfloor$. In particular, we will show that the full subcategory of such modules is semisimple. We begin with a general lemma concerning the localisation functor for linear actions.
\begin{lemm}\label{loc}
Consider a finite group $W$ acting linearly on a finite dimensional vector space $\h$. Suppose $\alpha\in\C[\h]^W$ is a symmetric polynomial, and let $U\subseteq\h$ denote the open set on which $\alpha$ is nonzero. Moreover 
suppose no nonzero module in $H_c(W,\h)\rs$ is supported on the zero set of $\alpha$. Then the localisation functor
\[
	L:H_c(W,\h)\coh\rightarrow H_c(W,U)\coh
\]
identifies $H_c(W,\h)\rs$ with a full subcategory of $H_c(W,U)\coh$ closed under subquotients.
\end{lemm}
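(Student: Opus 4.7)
The plan is to separate the lemma into full faithfulness of $L$ on $H_c(W,\h)\rs$ and closure of its essential image under subquotients in $H_c(W,U)\coh$. Let $Z$ denote the zero set of $\alpha$, a $W$-invariant closed subset of $\h$ since $\alpha\in\C[\h]^W$. Both assertions will rest on the following preliminary observation: for every $M\in H_c(W,\h)\rs$, the canonical map $M\to L(M)=M[\alpha^{-1}]$ is injective. Its kernel is the $\alpha$-torsion $\Gamma_Z(M)$, which by Theorem \ref{coh}(1) is an $H_c$-submodule of $M$; as a subobject of $M$ in the Serre subcategory $H_c\rs\subseteq H_c\coh$, it lies in $H_c\rs$ and is supported on $Z$, so the hypothesis forces it to vanish.

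Faithfulness of $L$ is then immediate, since any $f:M\to N$ in $H_c\rs$ with $L(f)=0$ satisfies $f(M)\subseteq\ker(N\hookrightarrow L(N))=0$. For fullness, given $g:L(M)\to L(N)$ in $H_c(W,U)\coh$, Proposition \ref{local} supplies a natural algebra map $H_c(W,\h)\to H_c(W,U)$, so $g$ is automatically $H_c(W,\h)$-linear; hence $g(M)\subseteq L(N)$ is an $H_c(W,\h)$-submodule, finitely generated over $\C[\h]$. By Proposition \ref{RS}, $H_c(W,\h)\rs$ coincides with category $\cO$, i.e.\ consists of coherent modules on which $\h$ acts locally nilpotently; this property is inherited by $g(M)$ from $M$ via $H_c$-linearity of $g$, and hence by $N':=N+g(M)$. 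Thus $N'\in H_c\rs$, so the quotient $N'/N\in H_c\rs$ is supported on $Z$ and therefore vanishes, forcing $g(M)\subseteq N$. The restriction $f:=g|_M:M\to N$ is then an $H_c(W,\h)$-module map with $L(f)=g$, since both agree on $M$ and are $\C[U]$-linear.

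For closure under subquotients, let $N_2\subseteq N_1\subseteq L(M)$ be $H_c(W,U)$-submodules with $M\in H_c(W,\h)\rs$, and set $M':=(N_1\cap M)/(N_2\cap M)$, with intersections taken inside $L(M)$. As submodules of $M\in H_c\rs$, both $N_i\cap M$ lie in $H_c\rs$, so $M'\in H_c\rs$. It remains to verify $L(M')\cong N_1/N_2$; by exactness of localization this reduces to $L(N_i\cap M)=N_i$. The inclusion $\subseteq$ is immediate, and for $\supseteq$, any $n\in N_i$ has the form $m/\alpha^k$ with $m\in M$, whence $m=\alpha^k n\in N_i$ (since $N_i$ is a $\C[U]$-module), placing $m$ in $N_i\cap M$ and $n$ in $L(N_i\cap M)$.

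The only real obstacle is the promotion of $N+g(M)$ to an object of $H_c(W,\h)\rs$: this combines the algebra map $H_c(W,\h)\to H_c(W,U)$ from Proposition \ref{local} (making $g$ linear over $H_c(W,\h)$) with the $\cO$-description of $H_c\rs$ from Proposition \ref{RS}. Every other step is formal, relying only on the injectivity observation and the Serre property of $H_c\rs$.
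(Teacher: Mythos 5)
Your proof is correct, but it follows a genuinely different route from the paper's. The paper first defines, intrinsically, the full subcategory $\mathcal A\subseteq H_c(W,U)\coh$ of modules all of whose elements are killed by $\h^n\alpha^n$ for some $n$, observes $\mathcal A$ is manifestly closed under subquotients, shows $L$ lands in and surjects onto $\mathcal A$, and then constructs an explicit quasi-inverse $E$ (taking the subspace of locally $\h$-nilpotent vectors) with $EL\cong\id$; full faithfulness then comes for free from the mutually inverse equivalences. You instead verify the three required properties --- faithfulness, fullness, closure under subquotients --- directly, all from the injectivity of $M\to L(M)$, the Serre property of $H_c\rs$, and the hypothesis that nothing in $H_c\rs$ is supported on $Z(\alpha)$. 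Your fullness argument (pulling $g(M)$ into $N$ by showing $(N+g(M))/N$ is a module in $H_c\rs$ supported on $Z$) and your subquotient argument (reducing to $L(N_i\cap M)=N_i$, which follows by clearing denominators) are both sound; the invocation of Proposition \ref{local} to get the algebra map $H_c(W,\h)\to H_c(W,U)$, and of Proposition \ref{RS} to use the local-nilpotence description of $H_c\rs$, are exactly what is needed. What the paper's route buys is an intrinsic description of the essential image and a concrete quasi-inverse; what yours buys is avoiding the construction of $E$ altogether, keeping the argument purely at the level of submodules of localizations. Both hinge on the same two inputs: exactness of localization and the non-existence of nonzero objects of $H_c\rs$ supported on $Z(\alpha)$.
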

\begin{proof}
Let $\mathcal A$ denote the full subcategory of $H_c(W,U)\coh$ consisting of modules $M$ such that every $m\in M$ is killed by $\h^n\alpha^n$ for some $n$. This is clearly closed under subquotients.

Certainly $L$ is exact, since $\C[\h][\alpha^{-1}]$ is flat over $\C[\h]$, and its image lies in $\mathcal A$. Conversely suppose $M\in\mathcal A$, and let $V\subseteq M$ be a finite dimensional subspace generating $M$ over 
$\C[\h][\alpha^{-1}]$. Multiplying $V$ by some power of $\alpha$, we may suppose that $\h$ acts nilpotently on $V$. Since $H_c(W,\h)=\C[\h]\otimes_\C\C[W]\otimes_\C\C[\h^*]$, the $H_c(W,\h)$-submodule $N$ of $M$ generated by $V$ 
is finitely generated over $\C[\h]$, and $\h$ acts locally nilpotently on $N$. Thus $N\in H_c(W,\h)\rs$. Moreover $M$ is generated by $N$ over $\C[\h][\alpha^{-1}]$, so $M\cong L(N)$ is in the image of the localisation functor. 
Therefore $\mathcal A$ is exactly the image of $L$.

Define the functor $E:H_c(W,U){\rm-mod}\rightarrow H_c(W,\h){\rm-mod}$ sending $M$ to the subspace $E(M)\subseteq M$ on which $\h$ acts nilpotently. Note that $E(M)$ is stable under the action of $H_c(W,\h)$. 
We will show that if $M\in H_c(W,\h)\rs$, then $EL(M)$ is naturally isomorphic to $M$. The kernel of the natural map $M\rightarrow L(M)$ is exactly the submodule $\Gamma_{Z(\alpha)}(M)$ defined in Theorem \ref{coh}(1). This is 
zero since we have assumed no modules are supported on $Z(\alpha)$. We may therefore identify $M$ with a submodule of $EL(M)$. Consider any $v\in EL(M)$. As above, the $H_c(W,\h)$-submodule $N$ of $L(M)$ generated 
by $v$ is in $H_c(W,\h)\rs$. Thus $M'=M+N\subseteq L(M)$ is also in $H_c(W,\h)\rs$, and $L(M')\cong L(M)$. Using the exactness of localisation, we conclude that $L(M'/M)=0$, so that $M'/M$ is supported on $Z(\alpha)$. Again 
this implies that $M'/M=0$, so $v\in M$. Thus $M$ is exactly $EL(M)$. We have shown above that the localisation functor $L:H_c(W,\h)\rs\rightarrow\mathcal A$ is essentially surjective, so $E$ and $L$ are mutually inverse functors.
\end{proof}
We also require the following well-known result.
\begin{lemm}\label{HeH}
Suppose a unital associative algebra $H$ contains an idempotent $e$ such that $HeH=H$. Then the functors
\[
	H{\rm-mod}\rightarrow eHe{\rm-mod},\hspace{20mm}M\mapsto eM
\]
and
\[
	eHe{\rm-mod}\rightarrow H{\rm-mod},\hspace{20mm}N\mapsto He\otimes_{eHe}N
\]
are mutually inverse equivalences.
\end{lemm}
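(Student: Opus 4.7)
The plan is to exhibit the two functors as an adjoint pair and verify that both the unit and the counit of this adjunction are isomorphisms. Since $e^2=e$, the space $He$ carries the structure of an $H$-$eHe$-bimodule (via left and right multiplication in $H$), and there is a natural isomorphism $\Hom_H(He,M)\cong eM$ given by $\phi\mapsto\phi(e)$. Combined with the tensor--hom adjunction, this identifies $F:=He\otimes_{eHe}-$ as the left adjoint of $G:=e(-)$, with counit $\varepsilon_M:He\otimes_{eHe}eM\rightarrow M$ sending $he\otimes em$ to $hem$, and unit $\eta_N:N\rightarrow e(He\otimes_{eHe}N)$ induced by $n\mapsto e\otimes n$.

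The unit is immediately an isomorphism, since $e(He\otimes_{eHe}N)=eHe\otimes_{eHe}N$ canonically equals $N$. For the counit, I would handle surjectivity and injectivity separately. Surjectivity is where $HeH=H$ first enters: choose a decomposition $1=\sum_ia_ieb_i$; then for any $m\in M$ we have $m=\sum_ia_ieb_im=\varepsilon_M\bigl(\sum_ia_ie\otimes eb_im\bigr)$, using that $a_ie\cdot eb_im=a_ieb_im$ and $eb_im\in eM$.

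For injectivity, I would first verify that $G$ is exact. The only nontrivial point is that if $M'\subseteq M$ is a submodule and $em\in M'$ then $em=e(em)\in eM'$, giving $eM\cap M'=eM'$ and hence exactness. Let $K=\ker\varepsilon_M$. Applying the exact functor $G$ to the short exact sequence $0\rightarrow K\rightarrow FG(M)\rightarrow M\rightarrow 0$ and invoking the triangle identity $G(\varepsilon_M)\circ\eta_{GM}=\id_{GM}$ shows that $eK=GK=0$.

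The final step, where the hypothesis is used decisively once more, promotes $eK=0$ to $K=0$: for any $k\in K$, writing $1=\sum_ia_ieb_i$ gives $k=\sum_ia_i(eb_ik)=0$ since each $eb_ik$ lies in $eK=0$. This is the crux of the argument; everything else is formal manipulation of the adjunction. Consequently $K=0$, so $\varepsilon_M$ is an isomorphism and $(F,G)$ is an inverse pair of equivalences.
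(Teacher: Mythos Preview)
Your proof is correct. The paper does not actually prove this lemma; it is stated there as a ``well-known result'' and invoked without argument. Your adjunction approach is the standard one: set up $F\dashv G$, show the unit is an isomorphism directly, and use the hypothesis $HeH=H$ twice---once for surjectivity of the counit and once to promote $eK=0$ to $K=0$.

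One small point worth making explicit: when you assert that $e(He\otimes_{eHe}N)=eHe\otimes_{eHe}N$ ``canonically equals $N$'', you are implicitly using that the natural map $eHe\otimes_{eHe}N\to He\otimes_{eHe}N$ is injective. This holds because $He=eHe\oplus(1-e)He$ as right $eHe$-modules, so the inclusion $eHe\hookrightarrow He$ is split and remains injective after tensoring. With that observation in place, your argument is complete.
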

Now consider the algebra $H_c=H_\frac{1}{m}(S_n,\C^n)$. Let
\[
	\h=\C^n=\Spec\C[x_1,\ldots,x_n],
\]
where $S_n$ permutes the $x_i$. Let $q=\lfloor n/m\rfloor$ and write $n=qm+p$, so that $0\leq p<m$. Let $\h'=\C^{q+p}$, with coordinate ring 
$\C[\h']=\C[z_1,\ldots,z_q,\,t_1,\ldots,t_p]$, on which $\Sqp$ acts naturally. Define
\[
	\pi:\C[\h]\twoheadrightarrow\C[\h']
\]
by
\[
	\pi(x_i)=\begin{cases}
		z_{\lceil i/m\rceil}&\text{if }i\leq qm\\
		t_{i-qm}&\text{if }i>qm.
	\end{cases}
\]
This identifies $\h'$ with one of the components of $X_q\subseteq\h$. The map $\pi$ restricts to a map $\pi:\C[\h]^{S_n}\rightarrow\C[\h']^{\Sqp}$. Unfortunately this is not surjective if $p>0$. We therefore localise 
as follows. Let $[n]=\{1,\,2,\ldots,n\}$, and let
\[
	\alpha_d=\sum_{J\subseteq[n]\atop|J|=p}\left(\sum_{j\in J}x_j^d\right)\prod_{j\in J,\,i\notin J}(x_i-x_j).
\]
Clearly $\alpha_d$ is symmetric. Moreover using that $p<m$, it can be shown that
\[
	\pi(\alpha_d)=\left(\sum_{j=1}^rt_j^d\right)\prod_{1\leq i\leq q\atop1\leq j\leq p}(z_i-t_j)^m.
\]
Let $U\subseteq\h$ and $U'\subseteq\h'$ denote the affine open subsets on which $\alpha_0$ and $\pi(\alpha_0)$ are nonzero, respectively. Let $A=\cO(U)^{S_n}$ and $B=\cO(U')^{\Sqp}$, 
and let $\phi:A\rightarrow B$ be the map induced by $\pi$. Now $B$ is generated as a $\C$-algebra by $\pi(\alpha_0)^{-1}$, $\sum_{j=1}^pt_j^d$ and $\sum_{i=1}^qz_i^d$. We have
\[
	\sum_{j=1}^pt_j^d=\phi\left(p\frac{\alpha_d}{\alpha_0}\right)
	\hspace{10mm}\text{and}\hspace{10mm}
	\sum_{i=1}^qz_i^d=\phi\left(\frac{1}{m}\sum_{i=1}^nx_i^d-\frac{p}{m}\frac{\alpha_d}{\alpha_0}\right),
\]
so $\phi$ is surjective. Moreover since $X_q$ is the union of the $S_n$ translates of $\h'$, the kernel of $\phi$ is $(\cO(U)I_q)^{S_n}$, where $I_q$ is the ideal vanishing on $X_q$.

Now consider the idempotents
\begin{eqnarray*}
	e&=&\frac{1}{n!}\sum_{w\in S_n}w\in\C[S_n]\subseteq H_c(S_n,\h)\subseteq H_c(S_n,U),\\
	e'&=&\frac{1}{q!p!}\sum_{w\in\Sqp}w\in\C[\Sqp]\subseteq H_{(m,c)}(\Sqp,\h')\subseteq H_{(m,c)}(\Sqp,U),
\end{eqnarray*}
where $(m,c)$ indicates the function which takes the value $m$ on transpositions in $S_q$ and $c$ on those in $S_p$. For convenience, we omit the subscripts $c$ and $(c,m)$ for the rest of this section. 
It is known (see Corollary 4.2 of \cite{BE}) that $H(S_n,\h)eH(S_n,\h)=H(S_n,\h)$, so that $H(S_n,U)eH(S_n,U)=H(S_n,U)$, and $M\mapsto eM$ is an equivalence of categories from $H(S_n,U)$-mod to 
$eH(S_n,U)e$-mod by Lemma \ref{HeH}. Similar results hold for $e'$. In particular the faithful action of $H(\Sqp,U')$ on $\cO(U')$ gives a faithful action of $e'H(\Sqp,U')e'$ on $B$. Also 
$\cO(U)I_q$ is an $H(S_n,U)$-submodule of $\cO(U)$ by Theorem 5.10 of \cite{Stoica}, so $eH(S_n,U)e$ acts on $A/\ker\phi\cong B$.
\begin{prop}\label{imsigma}
The image of the homomorphism $\sigma:eH(S_n,U)e\rightarrow\End_\C(B)$ describing the above action is exactly $e'H(\Sqp,U')e'\subseteq\End_\C(B)$, and the kernel is generated by $eI_qe$.
\end{prop}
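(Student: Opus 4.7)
There are two assertions to prove: (a) $\im\sigma = e'H(\Sqp,U')e'$, and (b) $\ker\sigma$ is the two-sided ideal of $\bar H := eH(S_n,U)e$ generated by $eI_qe$. My strategy is to verify (a) by evaluating $\sigma$ on a convenient generating set of $\bar H$, and to obtain (b) from Theorem \ref{twosided} together with the Morita equivalence of Lemma \ref{HeH}.

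For (a), under the Dunkl embedding $\bar H$ is generated by multiplication by $A$ together with the symmetric Dunkl power sums $e\bigl(\sum_{i=1}^n D_{y_i}^k\bigr)e$ for $k\geq 1$, and $\bar H' := e'H(\Sqp,U')e'$ is generated over $B$ by the analogous $z$- and $t$-Dunkl power sums of $H_{(m,c)}(\Sqp,\h')$. Multiplication generators match via the surjection $\phi:A\twoheadrightarrow B$, so the heart of the argument is to compute $\sigma\bigl(e\sum_i D_{y_i}^k e\bigr)$. I would split the sum according to the partition of $[n]$ into the $q$ blocks of size $m$ (corresponding to $z$-coordinates under $\pi$) and the $p$ tail indices (corresponding to $t$-coordinates). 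Apparent poles $\frac{c}{x_i-x_j}(s_{ij}-1)$ for $i,j$ in a common $m$-block are regular on $\cO(U)/\cO(U)I_q$ by a l'H\^opital residue computation (both factors vanish on $\h'$, producing a multiplication operator in the limit); poles with $i,j$ straddling distinct blocks or the $z/t$ divide are regular on $U$ by the localization at $\alpha_0$. Collecting contributions: the $m\times m$ pairs that collapse across two distinct $m$-blocks combine with $c=1/m$ to produce an effective Dunkl correction with parameter $m^2 c = m$ for the corresponding $S_q$-transposition; pairs among tail indices give parameter $c$ for the corresponding $S_p$-transposition; and pairs straddling a block and a tail (which correspond to transpositions \emph{outside} $\Sqp$) must cancel against residual block-collapse terms from the first case. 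This shows $\sigma(\bar H)\subseteq\bar H'$ and that every Dunkl-type generator of $\bar H'$ arises from some $\sigma\bigl(e\sum D_{y_i}^k e\bigr)$ modulo lower-order multiplication operators, so $\im\sigma = \bar H'$.

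For (b), the containment $\bar H(eI_qe)\bar H\subseteq\ker\sigma$ is immediate from $\phi(I_q)=0$. Conversely, if $h\in\ker\sigma$ then $h$ annihilates $B = e(\cO(U)/\cO(U)I_q)$; by Lemma \ref{HeH} applied with the idempotent $e$ (using $H(S_n,U)eH(S_n,U)=H(S_n,U)$ from Corollary 4.2 of \cite{BE}), $h$ annihilates all of $\cO(U)/\cO(U)I_q$, so the two-sided $H(S_n,U)$-ideal generated by $h$ lies in $\Ann_{H(S_n,U)}(\cO(U)/\cO(U)I_q)$. Theorem \ref{twosided} identifies this annihilator, at $c=1/m$, with the two-sided ideal of $H(S_n,U)$ generated by $I_q$: the list $0\subsetneq J_1\subsetneq\cdots\subsetneq J_{\lfloor n/m\rfloor}$ of two-sided ideals is totally ordered, and the ideal generated by $I_q$ has no room to lie strictly below $J_q=\Ann(\C[\h]/I_q)$, since any $f\in I_q\setminus I_{q'}$ acts nontrivially on $\C[\h]/I_{q'}$ for $q'<q$. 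Contracting back via $HeH=H$ places $h$ in $\bar H(eI_qe)\bar H$, as desired. The principal obstacle is the Dunkl calculation of the middle paragraph: tracking the block-collapse residues and checking that the combinatorial bookkeeping produces precisely the parameter $m$ on $S_q$-transpositions and $c$ on $S_p$-transpositions, including the exact cancellation of the cross block/tail corrections.
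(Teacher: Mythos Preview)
Your plan for the kernel (part (b)) is sound and matches the paper's argument: Morita equivalence via $H(S_n,U)eH(S_n,U)=H(S_n,U)$ reduces two-sided ideals of $eH(S_n,U)e$ to those of $H(S_n,U)$, and Theorem~\ref{twosided} finishes.

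The image computation has a real gap. Your expectation that the block/tail cross terms ``must cancel against residual block-collapse terms'' is false. Working with $k=2$ (which suffices: the paper invokes the known fact from \cite{EG,BEG1} that $eH(S_n,\h)e$ is generated over $e\C[\h]e$ by $p_{\ch x}(2)e$ alone), one finds that the same-block Dunkl contributions do \emph{not} produce a multiplication operator; rather, with $c=1/m$ they combine with the block part of the Laplacian to rescale it from $m$ down to $1$, yielding exactly the second-order piece $\frac{1}{m}p_{\ch z}(2)$. There is nothing left over to absorb the cross terms. The actual image of $p_{\ch x}(2)e$ under $\sigma$ is
\[
P=\frac{1}{m}p_{\ch z}(2)e'+p_{\ch t}(2)e'-2\sum_{i,j}\frac{1}{z_i-t_j}\Bigl(\frac{1}{m}\ch z_i-\ch t_j\Bigr)e',
\]
and the last sum survives (it lies in $e'H(\Sqp,U')e'$ because it is $\Sqp$-symmetric and the $z_i-t_j$ are units in $\cO(U')$). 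So $\im\sigma\subseteq e'H(\Sqp,U')e'$ is fine, but your surjectivity claim---that ``every Dunkl-type generator of $\bar H'$ arises from some $\sigma(e\sum D_{y_i}^k e)$ modulo lower-order multiplication operators''---does not follow: $P$ is a single linear combination of $p_{\ch z}(2)e'$ and $p_{\ch t}(2)e'$ plus a first-order correction, and you have not explained how to separate the two. The paper closes this gap by a commutator trick: first $[P,\pi(\alpha_0)e']$ recovers the cross correction, so $\frac{1}{m}p_{\ch z}(2)e'+p_{\ch t}(2)e'\in\im\sigma$; then $[P,p_z(2)e']$ produces $\sum_i z_i\ch z_i\,e'$, and a further commutator with $\frac{1}{m}p_{\ch z}(2)e'+p_{\ch t}(2)e'$ isolates $p_{\ch z}(2)e'$. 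Without something like this, your argument does not establish $\im\sigma\supseteq e'H(\Sqp,U')e'$.
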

\begin{proof}
Let $\{\ch x_a\}$ and $\{\ch z_i,\ch t_j\}$ be the bases of $\h$ and $\h'$ dual to $\{x_a\}$ and $\{z_i,t_j\}$. For $d\geq0$, let
\[
	p_x(d)=\sum_{a=1}^nx_a^d,
\]
and similarly for $p_t,\,p_z,\,p_{\ch x},\,p_{\ch z},\,p_{\ch t}$. It is known that $eH(S_n,\h)e$ is generated as an algebra by $e\C[\h]e$ and $p_{\ch x}(2)$ (see the proof of Proposition 4.9 of \cite{EG}, 
and Corollary 4.9 of \cite{BEG1}). It follows that $eH(S_n,U)e$ is generated by $Ae$ and $p_{\ch x}(2)e$, and that
\[
	e'H(\Sqp,U')e'
\]
is generated by $Be'$, $p_{\ch z}(2)e'$ and $p_{\ch t}(2)e'$. For $f,\,g\in A$, we have
\[
	\sigma(fe)\phi(g)=\phi(fg)=\phi(f)\phi(g),
\]
so the image under $\sigma$ of $Ae\subseteq eH(S_n,U)e$ is $Be'\subseteq e'H(\Sqp,U')e'$. Next, recalling that $\phi:A\rightarrow B$ is the restriction of $\pi:\cO(U)\rightarrow\cO(U')$, we show that
\begin{equation}\label{partial}
	\pi(\partial_{\ch x_a}f)=\begin{cases}
		\frac{1}{m}\partial_{\ch z_i}\phi(f)&\text{if }\pi(x_a)=z_i,\\
		\partial_{\ch t_j}\phi(f)&\text{if }\pi(x_a)=t_j
	\end{cases}
	\hspace{10mm}\text{for }f\in A.
\end{equation}
Indeed, this holds when $f=p_x(d)$, since
\[
	\frac{1}{m}\partial_{\ch z_i}\phi(p_x(d))
		=\frac{1}{m}\partial_{\ch z_i}(mp_z(d)+p_t(d))=dz_i^{d-1}
\]
and
\[
	\partial_{\ch t_j}\phi(p_x(d))=\partial_{\ch t_j}(mp_z(d)+p_t(d))=dt_j^{d-1}.
\]
Now as functions of $f$, both sides of (\ref{partial}) are $\C$-derivations from $A$ to $\cO(U')$. Therefore (\ref{partial}) holds on the subring generated by the $p_x(d)$, namely $\C[\h]^{S_n}$, 
whence it holds on $\C[\h]^{S_n}[\alpha_0^{-1}]=A$. Now for $f\in A$ we have
\begin{eqnarray*}
	p_{\ch x}(2)f
		&=&\sum_{a=1}^n\ch x_a\partial_{\ch x_a}f\\
		&=&\sum_{a=1}^n\left(\partial_{\ch x_a}^2f-c\sum_{b\neq a}\frac{\partial_{\ch x_a}f-s_{ab}\partial_{\ch x_a}f}{x_a-x_b}\right)\\
		&=&\triangle_xf-c\sum_{a\neq b}\frac{\partial_{\ch x_a}f-\partial_{\ch x_b}f}{x_a-x_b},
\end{eqnarray*}
where $\triangle_x$ denotes the Laplacian in the variables $x_a$. Similar formulae hold for the actions of $p_{\ch z}(2)$ and $p_{\ch t}(2)$ 
on $B$. Define $P\in e'H(\Sqp,U')e'$ by
\[
	P=\frac{1}{m}p_{\ch z}(2)e'+p_{\ch t}(2)e'-2\sum_{1\leq i\leq q\atop1\leq j\leq p}\frac{1}{z_i-t_j}\left(\frac{1}{m}\ch z_i-\ch t_j\right)e'.
\]
Note that
\[
	\sum_{1\leq i\leq q\atop1\leq j\leq p}\frac{1}{z_i-t_j}\left(\frac{1}{m}\ch z_i-\ch t_j\right)
\]
is symmetric under $\Sqp$, so final sum is an element of $e'H(\Sqp,U')e'$, though the individual summands are not. We claim that
\[
	\phi(p_{\ch x}(2)f)-P\phi(f)=0
\]
for $f\in A$. We first show that the left hand side is a derivation. Indeed
\begin{eqnarray*}
	\phi(p_{\ch x}(2)(fg))
		&=&\phi\left(fp_{\ch x}(2)g+gp_{\ch x}(2)f+\sum_a(\partial_{\ch x_a}f)(\partial_{\ch x_a}g)\right)\\
		&=&\phi(f)\phi(p_{\ch x}(2)g)+\phi(g)\phi(p_{\ch x}(2)f)\\
		&&{}+\sum_i\frac{1}{m}(\partial_{\ch z_i}\phi(f))(\partial_{\ch z_i}\phi(g))
			+\sum_j(\partial_{\ch t_j}\phi(f))(\partial_{\ch t_j}\phi(g)),
\end{eqnarray*}
using (\ref{partial}). Also
\begin{equation}\label{[P,f]}
	P(fg)=fP(g)+gP(f)+\sum_i\frac{1}{m}(\partial_{\ch z_i}f)(\partial_{\ch z_i}g)
			+\sum_j(\partial_{\ch t_j}f)(\partial_{\ch t_j}g).
\end{equation}
Subtracting, we see that $\phi(p_{\ch x}(2)f)-P\phi(f)$ is indeed a derivation in $f$. By the same argument as above, it suffices to 
check the equation when $f=p_x(d)$. We calculate
\[
	\phi(p_{\ch x}(2)p_x(d))
		=\phi\left(d(d-1)p_x(d-2)-cd\sum_{a\neq b}\frac{x_a^{d-1}-x_b^{d-1}}{x_a-x_b}\right).
\]
If $\pi(x_a)=\pi(x_b)=z_i$, then
\[
	\pi\left(\frac{x_a^{d-1}-x_b^{d-1}}{x_a-x_b}\right)
		=\pi\left(\sum_{i=0}^{d-2}x_a^ix_b^{d-2-i}\right)
		=(d-1)z_i^{d-2}.
\]
Therefore recalling that $c=\frac1m$,
\begin{eqnarray*}
	\phi(p_{\ch x}(2)p_x(d))\hspace{-25mm}\\
		&=&d(d-1)(mp_z(d-2)+p_t(d-2))-cd(d-1)m(m-1)p_z(d-2)\\
		&&{}-cdm^2\sum_{i\neq i'}\frac{z_i^{d-1}-z_{i'}^{d-1}}{z_i-z_{i'}}
			-2cdm\sum_{i,j}\frac{z_i^{d-1}-t_j^{d-1}}{z_i-t_j}
			-cd\sum_{j\neq j'}\frac{t_j^{d-1}-t_{j'}^{d-1}}{t_j-t_{j'}}\\
		&=&\left[d(d-1)p_z(d-2)-md\sum_{i\neq i'}\frac{z_i^{d-1}-z_{i'}^{d-1}}{z_i-z_{i'}}\right]\\
		&&{}+\left[d(d-1)p_t(d-2)-cd\sum_{j\neq j'}\frac{t_j^{d-1}-t_{j'}^{d-1}}{t_j-t_{j'}}\right]
			-2d\sum_{i,j}\frac{z_i^{d-1}-t_j^{d-1}}{z_i-t_j}\\
		&=&p_{\ch z}(2)p_z(d)+p_{\ch t}(2)p_t(d)-2\sum_{i,j}\frac{\partial_{\ch z_i}p_z(d)-\partial_{\ch t_j}p_t(d)}{z_i-t_j}\\
		&=&P(mp_z(d)+p_t(d))\\
		&=&P\phi(p_x(d)),
\end{eqnarray*}
as required. Therefore $\im\sigma$ is the subalgebra of $\End_\C(B)$ generated by $Be'$ and $P$. Now (\ref{[P,f]}) shows that for $fe'\in Be'\subseteq e'H(\Sqp,U')e'$, we have
\[
	[P,fe']=P(f)e'+\sum_i\frac{1}{m}(\partial_{\ch z_i}f)\ch z_ie'
			+\sum_j(\partial_{\ch t_j}f)\ch t_je'.
\]
In particular,
\[
	[P,\pi(\alpha_0)e']=P(\pi(\alpha_0))e'+\sum_{1\leq i\leq q\atop1\leq j\leq p}\frac{m\pi(\alpha_0)}{z_i-t_j}\left(\frac{1}{m}\ch z_i-\ch t_j\right)e'.
\]
Therefore $\im\sigma$ contains
\[
	P+\frac{2}{m\pi(\alpha_0)}\left([P,\pi(\alpha_0)e']-P(\pi(\alpha_0))e'\right)=\frac{1}{m}p_{\ch z}(2)e'+p_{\ch t}(2)e'.
\]
We also have
\[
	[P,p_z(2)e']=P(p_z(2))e'+\frac{2}{m}\sum_iz_i\ch z_ie',
\]
so $\im\sigma$ contains
\[
	\left[\frac{1}{m}p_{\ch z}(2)e'+p_{\ch t}(2)e',\sum_iz_i\ch z_ie'\right]=\frac{2}{m}p_{\ch z}(2)e'.
\]
Thus $\im\sigma$ contains $Be'$, $p_{\ch z}(2)e'$ and $p_{\ch t}(2)e'$, so it is exactly $e'H(\Sqp,U')e'$.

Finally since $H(S_n,U)eH(S_n,U)=H(S_n,U)$, the two-sided ideals of the algebra $eH(S_n,U)e$ are in one to one correspondence with those in $H(S_n,U)$. The latter are determined by their inverse image 
in $H(S_n,\h)$. Clearly the kernel of $\sigma$ is proper and contains $eI_qe$, so by Theorem \ref{twosided} it is generated by $eI_qe$.
\end{proof}
We have identified $\h'$ with a subspace of $\h$, and the stabiliser of a generic point of $\h'$ in $S_n$ is a natural copy of $S_m^q\subseteq S_n$. Let $\hr'$ denote the elements of $\h'$ 
with exactly this stabiliser. Note that $\hr'$ is contained in $U$, since the zero set of
\[
	\pi(\alpha_0)=p\prod_{1\leq i\leq q\atop1\leq j\leq p}(z_i-t_j)^m
\]
consists of elements whose stabiliser in $S_n$ is at least as large as $S_{m+1}\times S_m^{q-1}$. Also note that the normaliser subgroup $N_{S_n}(S_m^q)\subseteq S_n$ acts on $\hr'$. Each coset of $S_m^q$ in 
$N_{S_n}(S_m^q)$ has a unique representative of minimal length, and these representatives form a subgroup isomorphic to $\Sqp$. The induced action of $\Sqp$ 
on $\h'=\Spec\C[z_1,\ldots,z_q,t_1,\ldots,t_p]$ is the natural one. We now use the homomorphism $\sigma$ to analyse the subcategory of $H(S_n,\h)\rs^q$ of minimally supported modules. Some of this 
information is generalised by Theorems \ref{rsqrsq+1} and \ref{suppSnpart}.
\begin{thm}\label{minsupp}
Let $H(S_n,\h)\rs^q$ be the full subcategory of $H(S_n,\h)\rs$, consisting of modules supported on $X_q$. There is an equivalence of categories
\[
	F:H(S_n,\h)\rs^q\rightarrow H(\Sqp,\h')\rs,
\]
such that $\cO(\hr')\otimes_{\C[\h]}M\cong\cO(\hr')\otimes_{\C[\h']}F(M)$ as $\C[\Sqp]\ltimes D(\hr')$-modules. In particular, $H(S_n,\h)\rs^q$ is semisimple, and its irreducibles are exactly 
the $L(\tau_\lambda)$ for which $q_m(\lambda)=q$.
\end{thm}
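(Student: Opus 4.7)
The plan is to build $F$ as a composition of standard equivalences, each coming from a lemma established earlier in the section, and then to read off semisimplicity and the irreducibles from the target.

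First, by Lemma~\ref{loc} applied to $\alpha_0\in\C[\h]^{S_n}$, the localisation $L\colon M\mapsto \cO(U)\otimes_{\C[\h]}M$ identifies $H(S_n,\h)\rs^q$ with the full subcategory of $H(S_n,U)\coh$ of modules supported on $X_q\cap U$; the hypothesis holds because any nonzero submodule of an object of $H(S_n,\h)\rs^q$ is $S_n$-invariant, hence has support $X_q$, which meets $U$. Next, Lemma~\ref{HeH} gives an equivalence $M\mapsto eM$ between $H(S_n,U)\text{-mod}$ and $eH(S_n,U)e\text{-mod}$. Proposition~\ref{imsigma} then gives a surjection $\sigma\colon eH(S_n,U)e\twoheadrightarrow e'H(\Sqp,U')e'$ with kernel generated by $eI_qe$, so pullback along $\sigma$ identifies $e'H(\Sqp,U')e'\text{-mod}$ with the full subcategory of $eH(S_n,U)e\text{-mod}$ on which $eI_qe$ acts as zero. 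The remaining two steps reverse the earlier ones: Lemma~\ref{HeH} in the other direction gives $H(\Sqp,U')\text{-mod}$, and Lemma~\ref{loc} applied to $\pi(\alpha_0)\in\C[\h']^{\Sqp}$ identifies $H(\Sqp,\h')\rs$ with a full subcategory of $H(\Sqp,U')\coh$.

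The main obstacle is to verify that the subcategories match on the nose. On the $S_n$ side, one must check that $eI_qe$ acts trivially on $eM$ for every $M\in H(S_n,\h)\rs^q$, i.e., that $I_q$ kills $\cO(U)\otimes_{\C[\h]}M$. Because $q=\lfloor n/m\rfloor$ is maximal, $X_q\cap U$ is the $S_n$-orbit of the smooth locally closed set $\hr'$, and Theorem~\ref{coh}(2) furnishes a flat connection on $\cO(U)\otimes M$ along these components, forcing it to be locally free there and hence annihilated by the ideal of $X_q\cap U$ in $\cO(U)$, which contains $I_q\cdot\cO(U)$. On the $\Sqp$ side, one must check that the modules produced lie in the image of localisation, i.e., $\h'$ acts locally nilpotently; this follows by tracking the Dunkl--Opdam generators through the explicit formula for $\sigma$ given in the proof of Proposition~\ref{imsigma}, so that local nilpotency of $\h$ on $M$ transfers to local nilpotency of $\h'$ on $F(M)$. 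Finally the identification $\cO(\hr')\otimes_{\C[\h]}M\cong \cO(\hr')\otimes_{\C[\h']}F(M)$ as $\C[\Sqp]\ltimes D(\hr')$-modules falls out step by step: localisation preserves the underlying $D$-module, the two $M\mapsto eM$ equivalences only see the invariant part, and $\sigma$ was constructed precisely to intertwine the two Dunkl connections described in Proposition~\ref{flatconn}.

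Granting $F$, semisimplicity and the classification of irreducibles are routine. The splitting $\h'=\C^q\oplus\C^p$ into $\Sqp$-representations yields a tensor decomposition $H(\Sqp,\h')\cong H_m(S_q,\C^q)\otimes H_{1/m}(S_p,\C^p)$. The first factor has integer parameter $m$, so the denominator $1$ does not lie in $\{2,\ldots,q\}$; the second has denominator $m>p$, not in $\{2,\ldots,p\}$. Hence both factor categories $\rs$ are semisimple, so $H(\Sqp,\h')\rs$ is semisimple with $p(q)p(p)$ irreducibles, indexed by pairs $(\mu,\nu)$ with $\mu\vdash q$ and $\nu\vdash p$. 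On the source side, Theorem~\ref{suppSnpartle} gives $L(\tau_\lambda)\in H(S_n,\h)\rs^q$ iff $q_m(\lambda)=q$ (since $q$ is maximal). Writing $\lambda=m\mu+\nu$ with $|\mu|=q$, $|\nu|=p<m$, the partition $\nu'$ is automatically $m$-regular, so there are again $p(q)p(p)$ such $\lambda$. The counts match, and $F$ being an equivalence of semisimple categories identifies the irreducibles as claimed.
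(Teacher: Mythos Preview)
Your chain of functors is the same as the paper's, but two of the linking steps are genuine gaps.

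First, your argument that $I_q$ annihilates $\cO(U)\otimes_{\C[\h]}M$ does not work. The claim that ``$X_q\cap U$ is the $S_n$-orbit of $\hr'$'' is false whenever $q\geq2$ or $p\geq2$: a point of $\h'$ with $z_1=z_2$ (or $t_1=t_2$) and all $z_i\neq t_j$ lies in $X_q\cap U$ but has stabiliser strictly larger than $S_m^q$. The flat connection of Theorem~\ref{coh}(2) only shows that $i_{\hr'}^*M$ is locally free over $\cO(\hr')$, which says nothing about possible nilpotent thickening of $M$ along the deeper strata of $X_q\cap U$. The paper gets $I_qM=0$ from Theorem~\ref{twosided} (the radicality of $I_q$ for $c=1/m$), and there is no obvious shortcut.

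Second, and more seriously, ``tracking the Dunkl--Opdam generators through $\sigma$'' is not a proof that the resulting $H(\Sqp,U')$-module lies in the image of $L'$. The map $\sigma$ is only computed on the degree-two element $p_{\ch x}(2)e$, not on the individual $\ch z_i,\ch t_j$, so local nilpotency does not transfer by inspection. This is precisely the technical heart of the paper's proof: Claim~1 shows (via $Res_b$ to $H_c(S_{m+1},\C^{m+1})$ and the known structure of its minimal-support modules) that the $\D$-module $\cO(\hr')\otimes_{\C[\h]}M$ has \emph{trivial monodromy} around each component of $Z(\pi(\alpha_0))$; Claim~2 uses this to extend across the removed divisor and produce a preimage in $H(\Sqp,\h')\rs$; and Claim~3 is a substantial bimodule computation identifying the two $\C[\Sqp]\ltimes D(\hr')$-structures. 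None of these steps ``falls out'', and without the monodromy analysis you have only a fully faithful embedding of $H(S_n,\h)\rs^q$ into $eH(S_n,U)e\coh$, not into the image of $\sigma^*e'L'$. Your final counting argument is fine, but it presupposes the embedding $F$ you have not yet established.
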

\begin{proof}
Let $eH(S_n,U)e\coh$ denote the full subcategory of $eH(S_n,U)e{\rm-mod}$ consisting of modules finitely generated over $e\C[\h]e$, and similarly for $H(\Sqp,U')$. 
Recall the equivalences $H(S_n,U)\coh\isom eH(S_n,U)e\coh$ and $H(\Sqp,U')\coh\isom e'H(\Sqp,U')e'\coh$; we denote the functors by $e$ and $e'$. By Lemma \ref{loc}, we also 
have localisation functors $L:H(S_n,\h)\rs\rightarrow H(S_n,U)\coh$ and $L':H(\Sqp,\h')\rs\rightarrow H(\Sqp,U')\coh$ which identify their domains with full subcategories 
of their codomains closed under subquotients. Finally the previous proposition gives a functor $\sigma^*$ identifying $e'H(\Sqp,U')e'\coh$ with a full subcategory of $eH(S_n,U)e\coh$, 
again closed under subquotients.
\[\xymatrix{
	H(S_n,\h)\rs\ar@{^{(}->}[r]^L&H(S_n,U)\coh\ar[r]^-e_-{\sim}&e'H(\Sqp,U')e'\coh\\
	H(\Sqp,\h')\rs\ar@{^{(}->}[r]^{L'}&H(\Sqp,U')\coh\ar[r]^-{e'}_-{\sim}&e'H(\Sqp,U')e'\coh\ar@{^{(}->}[u]^{\sigma^*}.
}\hspace{-20mm}\]
Note that these functors are all exact. We make the following claims about them:

\textbf{Claim 1:} If $M\in H(S_n,\h)\rs^q$, then the $\cO$-coherent $\D$-module $\cO(\hr')\otimes_{\C[\h]}M$ on $\hr'$ has regular singularities and trivial monodromy around each irreducible component 
of $Z(\alpha_0)\cap\h'$.

\textbf{Claim 2:} If $N\in H(\Sqp,U')\coh$ is such that the $\cO$-coherent $\D$-module $\cO(\hr')\otimes_{\cO(U')}N$ on $\hr'$ has regular singularities and trivial monodromy around each component of 
$Z(\alpha_0)\cap\h'$, then $N\cong L'(N')$ for some $N'\in H(\Sqp,\h')\rs$.

\textbf{Claim 3:} If $M\in H(S_n,U)\coh$ and $N\in H(\Sqp,U')\coh$ are such that $eM\cong\sigma^*e'N$, then the $\Sqp$-equivariant $\D$-modules $\cO(\hr')\otimes_{\cO(U)}M$ and $\cO(\hr')\otimes_{\cO(U')}N$ 
on $\hr'$ are isomorphic.

Let us first see how these results imply the statements of the theorem. The composites $eL$ and $\sigma^*e'L'$ identify $H(S_n,\h)\rs^q$ and $H(\Sqp,\h')\rs$ with full subcategories of $eH(S_n,U)e\coh$ 
closed under subquotients, which we call the images of $eL$ and $\sigma^*e'L'$.  Now consider any $M\in H(S_n,\h)\rs^q$. Theorem \ref{twosided} shows that $I_q$ kills $M$, so $eL(M)$ is killed by $eI_qe$, and therefore by the 
kernel of $\sigma$. Thus $eL(M)\cong\sigma^*e'N$ for some $N\in H(\Sqp,U')\coh$. By claim 3,
\[
	\cO(\hr')\otimes_{\cO(U')}N\cong\cO(\hr')\otimes_{\cO(U)}L(M)\cong\cO(\hr')\otimes_{\C[\h]}M
\]
as $D(\hr')$-modules. By claim 1, this $\D$-module has regular singularities and trivial monodromy around each component of $Z(\alpha_0)\cap\h'$. Therefore by claim 2, $N\cong L'(N')$ for some $N'\in H(\Sqp,\h')\rs$, 
so that $eL(M)\cong\sigma^*e'L'(N')$. This proves that the image of $eL$ is contained in that of $\sigma^*e'L'$, so there is an exact fully faithful embedding $F:H(S_n,\h)\rs^q\rightarrow H(\Sqp,\h')\rs$ such that 
$eL=\sigma^*e'L'F$, and whose image is closed under subquotients. Now $H(\Sqp,\h')\rs$ is semisimple and has $\bp_q\bp_p$ (isomorphism classes of) irreducibles, where $\bp_n$ is the number of partitions of $n$. On the other hand, 
by Theorem \ref{suppSnpartle}, the $\bp_q\bp_p$ distinct irreducibles
\[
	\{L(\tau_{m\lambda+\nu})\mid\lambda\vdash q,\,\nu\vdash p\}
\]
lie in $H(S_n,\h)\rs^q$. It follows that $F$ is an equivalence of categories, and that the above are all of the irreducibles in $H(S_n,\h)\rs^q$. Finally for $M\in H(S_n,\h)\rs^q$, we have $eL(M)\cong\sigma^*e'L'F(M)$, 
so claim 3 implies
\[
	\cO(\hr')\otimes_{\C[\h]}M=\cO(\hr')\otimes_{\cO(U)}L(M)\cong\cO(\hr')\otimes_{\cO(U')}L'F(M)=\cO(\hr')\otimes_{\C[\h']}F(M)
\]
as $\Sqp$-equivariant $\D$-modules on $\hr'$. It remains to prove the three claims.

\textbf{Proof of claim 1:} Fix $M\in H(S_n,\h)\rs^q$. By assumption, $\cO(\hr')\otimes_{\C[\h]}M$ has regular singularities. Let $b\in\h'$ be a ``generic" point of $Z(\alpha_0)\cap\h'$, that is, a point 
whose stabiliser is $W'\cong S_{m+1}\times S_m^{q-1}$. By Proposition \ref{resmon}, it suffices to show that the $H(W',\h)$-module $N=Res_bM$ is such that $\cO(\hr')\otimes_{\C[\h]}N$ has trivial monodromy 
around $b$. This depends only on the action of $H(S_{m+1},\C^m)\subseteq H(W',\h)$, so we may suppose $n=m+1$. But $N$ has minimal support by Theorem \ref{resind}(2). The proof of Corollary 4.7 of 
\cite{BE} shows that the only irreducible in $H_c(S_{m+1},\C^m)\rs$ with minimal support is $L(\C)$, where $\C$ denotes the trivial representation of $S_{m+1}$. Moreover Lemma 2.9 of \cite{GGOR} shows that 
${\rm Ext}^1(L(\C),L(\C))=0$, so $N$ is a direct sum of copies of $L(\C)$. Thus $\cO(\hr')\otimes_{\C[\h]}N$ is a free module with trivial connection, and in particular has trivial monodromy.

\textbf{Proof of claim 2:} Suppose $N\in H(\Sqp,U')\coh$ has the given properties. Let $U''\subseteq\h'$ denote the subset of points with trivial stabiliser in $\Sqp$, so that $\hr'=U'\cap U''$. Then $Sh(N)|_{\hr'}$ 
is an $\cO$-coherent $\D$-module on $\hr'$ with regular singularities and trivial monodromy around $Z(\pi(\alpha_0))$, so it extends to an $\cO$-coherent $\D$-module $\cN''$ on $U''$ with regular singularities. We 
may glue $Sh(N)$ and $\cN''$ to obtain a coherent sheaf $\cN_1$ on $U'\cup U''$. Note that the complement of $U'\cup U''$ in $\h'$ has codimension 2, so $\Gamma(U'\cup U'',\cO_{\h'})=\C[\h']$. Also $\cN''$ is torsion free since 
it is an $\cO$-coherent $\D$-module. Therefore $\cN_1$ is torsion free, so Lemma \ref{cohnonaff} shows that
\[
	N'=\Gamma(U'\cup U'',\cN_1)
\]
is finitely generated over $\C[\h']$. Also $H(\Sqp,U'')\cong D(U'')$ acts on $\cN''$ consistently with the action of $H(\Sqp,U')$ on $N$, so we obtain an action of $H(\Sqp,\h')$ on $N'$. Finally since 
$\cN'|_{U''}=\cN''$ has regular singularities, we have $N'\in H(\Sqp,\h')\rs$ and $L'(N')=\cO(U')\otimes_{\C[\h']}N'=N$.

\textbf{Proof of claim 3:} We have a natural $\C[\Sqp]\ltimes D(\hr')$ action on $\cO(\hr')\otimes_{\cO(U')}H(\Sqp,U')$ given by
\[
	w(f\otimes a)=\act[w]f\otimes wa
\]
and
\[
	\nabla_y(f\otimes a)=y(f)\otimes a+f\otimes ya-\sum_{s\in S'}c_s\langle y,\alpha_s\rangle\frac{f}{\alpha_s}\otimes(s-1)a,
\]
where $S'\subseteq\Sqp$ is the set of reflections. For $N\in H(\Sqp,U')\coh$, the natural isomorphism
\[
	\cO(\hr')\otimes_{\cO(U')}H(\Sqp,U')\otimes_{H(\Sqp,U')}N\isom\cO(\hr')\otimes_{\cO(U')}N
\]
preserves the equivariant $D(\hr')$-module structures by Proposition \ref{flatconn}. By Lemma \ref{HeH}, we may write the above as
\[
	\cO(\hr')\otimes_{\cO(U')}N\cong L'\otimes_{e'H(\Sqp,U')e'}e'N,
\]
where $L'$ is the $(\C[\Sqp]\ltimes D(\hr'),e'H(\Sqp,U')e')$-bimodule
\[
	L'=\cO(\hr')\otimes_{\cO(U')}H(\Sqp,U')e'.
\]
Since $H(\Sqp,U')=\cO(U')\C[(\h')^*]\C[\Sqp]$, we have
\begin{eqnarray*}
	L'&=&\cO(\hr')\otimes_{\cO(U')}H(\Sqp,U')e'H(\Sqp,U')e'\\
		&=&\cO(\hr')\otimes_{\cO(U')}\C[(\h')^*]e'H(\Sqp,U')e'\\
		&=&\bigcup_{d\geq0}\cO(\hr')\otimes_{\cO(U')}\C[(\h')^*]^{\leq d}e'H(\Sqp,U')e'.
\end{eqnarray*}
But
\[
	f\otimes ya=\nabla_y(f\otimes a)-y(f)\otimes a+\sum_{s\in S'}c_s\langle y,\alpha_s\rangle\frac{f}{\alpha_s}\otimes(s-1)a,
\]
so
\begin{eqnarray*}
	\cO(\hr')\otimes_{\cO(U')}\C[(\h')^*]^{\leq d+1}e'H(\Sqp,U')e'\hspace{-40mm}\\
		&\subseteq&D(\hr')\cO(\hr')\otimes_{\cO(U')}\C[(\h')^*]^{\leq d}e'H(\Sqp,U')e'.
\end{eqnarray*}
Since $\C[(\h')^*]^{\leq0}=\C$, it follows that $L'$ is generated as a bimodule by $1\otimes e'$. Now $H(\Sqp,U')$ acts faithfully on $\cO(U')$, so we have an injection
\[
	H(\Sqp,U')e'\hookrightarrow\Hom_\C(e'\cO(U'),\cO(U')).
\]
Note that $e'\cO(U')=\cO(U')^{\Sqp}=B$. Also $\cO(\hr')$ is flat over $\cO(U')$, so we obtain an inclusion
\[
	i:L'\hookrightarrow\Hom_\C(B,\cO(\hr')).
\]
Now $\C[\Sqp]\ltimes D(\hr')$ and $e'H(\Sqp,U')e'$ act on $\Hom_\C(B,\cO(\hr'))$ from the left and right, respectively, via their inclusions in $\End_\C(\cO(\hr'))$ and $\End_\C(B)$, and 
$i$ is a homomorphism of bimodules. Finally $i(1\otimes e')$ is the natural map $\phi:B\rightarrow\cO(\hr')$, so $L'$ is the sub-bimodule of $\Hom_\C(B,\cO(\hr'))$ generated by $\phi$.

Similarly $\cO(\hr')\otimes_{\cO(U)}H(S_n,U)$ admits an action of $\C[\Sqp]\ltimes D(\hr')$ given by
\[
	w(f\otimes a)=\act[w]f\otimes wa
\]
and
\[
	\nabla_y(f\otimes a)=y(f)\otimes a+f\otimes ya-\frac{1}{m}\sum_{s\in S\setminus S_m^q}\langle y,\alpha_s\rangle\frac{f}{\alpha_s}\otimes(s-1)a,
\]
where $S\subseteq S_n$ is the set of reflections. We are now identifying $\Sqp$ with a subgroup of $S_n$ as discussed before the theorem. Again for $M\in H(S_n,U)\coh$, we have a 
$\C[\Sqp]\ltimes D(\hr')$-module isomorphism
\[
	\cO(\hr')\otimes_{\cO(U)}M\cong\cO(\hr')\otimes_{\cO(U)}H(S_n,U)e\otimes_{eH(S_n,U)e}eM.
\]
Therefore if $eM\cong\sigma^*e'N$, then
\[
	\cO(\hr')\otimes_{\cO(U)}M\cong L\otimes_{e'H(\Sqp,U')e'}e'N
\]
where $L$ is the $(\C[\Sqp]\ltimes D(\hr'),e'H(\Sqp,U')e')$-bimodule
\[
	L=\cO(\hr')\otimes_{\cO(U)}H(S_n,U)e\otimes_{eH(S_n,U)e}e'H(\Sqp,U')e'.
\]
The exact sequence
\[
	eH(S_n,U)I_qH(S_n,U)e\hookrightarrow eH(S_n,U)e\twoheadrightarrow e'H(\Sqp,U')e'
\]
gives rise to a right exact sequence of right $eH(S_n,U)e$-modules
\[
	\cO(\hr')\otimes_{\cO(U)}H(S_n,U)I_qH(S_n,U)e
		\rightarrow\cO(\hr')\otimes_{\cO(U)}H(S_n,U)e\stackrel{\rho}{\twoheadrightarrow}L,
\]
since $H(S_n,U)eH(S_n,U)=H(S_n,U)$. As above, we have
\[
	\cO(\hr')\otimes_{\cO(U)}H(S_n,U)e=\bigcup_{d\geq0}\cO(\hr')\otimes_{\cO(U)}\C[\h^*]^{\leq d}eH(S_n,U)e.
\]
Now $\h$ is spanned by $\h'$ and $\ch x_i-\ch x_j$, for all $i\neq j$ such that $x_i-x_j$ vanishes on $\h'$. Fix such $i$ and $j$, and let $g\in\C[\h]$ vanish on all components of $X_q$ except $\h'$, such that $g$ is nonzero on $\hr'$. 
Then $\act[s_{ij}]f-f$ is divisible by $x_i-x_j$ for any $f\in\C[\h]$, so $g(s_{ij}-1)\in H(S_n,\h)$ sends $\C[\h]$ into $I_q$. But the annihilator of $\C[\h]/I_q$ in $H(S_n,\h)$ is $H(S_n,\h)I_qH(S_n,\h)$, so
\[
	H(S_n,U)I_qH(S_n,U)\ni[\ch x_i,g(s_{ij}-1)]=[\ch x_i,g](s_{ij}-1)+g(\ch x_i-\ch x_j)s_{ij}.
\]
Note that $[\ch x_i,g](s_{ij}-1)\in\C[\h]\C[S_n]$. But $g$ is invertible in $\cO(\hr')$, so
\[
	\ch x_i-\ch x_j\in\cO(\hr')\otimes_{\C[\h]}H(S_n,U)I_qH(S_n,U)+\cO(\hr')\C[S_n],
\]
whence
\begin{eqnarray*}
	\cO(\hr')\otimes_{\cO(U)}\C[\h^*]^{\leq d+1}eH(S_n,U)e
		&\subseteq&\cO(\hr')\otimes_{\cO(U)}\h'\C[\h^*]^{\leq d}eH(S_n,U)e\\
		&&{}+\cO(\hr')\otimes_{\cO(U)}\C[\h^*]^{\leq d}eH(S_n,U)e\\
		&&{}+\cO(\hr')\otimes_{\cO(U)}H(S_n,U)I_qH(S_n,U)e.
\end{eqnarray*}
Applying $\rho$, and using the same argument as above, we obtain
\[
	\rho\left(\cO(\hr')\otimes_{\cO(U)}\C[\h^*]^{\leq d+1}eH(S_n,U)e\right)
		\subseteq D(\hr')\rho\left(\cO(\hr')\otimes_{\cO(U)}\C[\h^*]^{\leq d}eH(S_n,U)e\right).
\]
Thus $L$ is generated as a bimodule by $\rho(1\otimes e)$. Finally $H(S_n,U)$ acts on $\cO(U)/I_q\cO(U)$ with annihilator $H(S_n,U)I_qH(S_n,U)$, so we have an inclusion
\[
	H(S_n,U)e/H(S_n,U)I_qH(S_n,U)e\hookrightarrow\Hom_\C(e(\cO(U)/I_q\cO(U)),\cO(U)/I_q\cO(U))
\]
of $(\cO(U)/I_q\cO(U),eH(S_n,U)e)$-bimodules. Note that $e(\cO(U)/I_q\cO(U))=B$. Since $\cO(\hr')$ is flat over $\cO(U)/I_q\cO(U)$, we obtain an inclusion
\[
	j:L\hookrightarrow\Hom_\C(B,\cO(\hr'))
\]
of $(\cO(\hr'),e'H(\Sqp,U')e')$-bimodules. Again $j$ preserves the left action of
\[
	\C[\Sqp]\ltimes D(\hr'),
\]
and sends $\rho(1\otimes e)$ to $\phi$, so $L$ is the sub-bimodule of $\Hom_\C(B,\cO(\hr'))$ generated by $\phi$. In particular, $L\cong L'$ as bimodules, so
\[
	\cO(\hr')\otimes_{\cO(U)}M\cong L\otimes_{e'H(\Sqp,U')e'}e'N\cong L'\otimes_{e'H(\Sqp,U')e'}e'N\cong\cO(\hr')\otimes_{\cO(U')}N
\]
as $\C[\Sqp]\ltimes D(\hr')$-modules, as required.
\end{proof}
\section{The Monodromy Functors for Type A}\label{Sec:mon}
We continue to study the case $W=S_n$, $\h=\C^n$ and $c=\frac{1}{m}$. Previously we studied the modules in $H_c\rs$ supported on $X_q$, where $q=\lfloor n/m\rfloor$. Now let $q$ be any integer satisfying 
$0\leq qm\leq n$, and let $H_c\rs^q$ denote the Serre subcategory of $H_c\rs$ consisting of all modules supported on $X_q$. Our goal in this section is to construct an equivalence of categories from $H_c\rs^q/H_c\rs^{q+1}$ to 
the category of finite dimensional representations of a certain Hecke algebra (where $H_c\rs^{\lfloor n/m\rfloor+1}$ is the subcategory containing only the zero module). From this we will deduce Theorem \ref{suppSnpart}.

Let us fix an integer $q$ as above, and put $p=n-qm$. Consider the natural copy $S_m^q\subseteq S_n$, and put $\h'=\h^{S_m^q}$ and $\hr'=\h^{S_m^q}_{\rm reg}$. As in the previous section, we may identify $\h'$ with 
$\Spec\C[z_1,\ldots,z_q,\,t_1,\ldots,t_p]$ via the map
\[
	\pi:\C[\h]\twoheadrightarrow\C[z_1,\ldots,z_q,\,t_1,\ldots,t_p],\hspace{10mm}
	\pi(x_i)=\begin{cases}
		z_{\lceil i/m\rceil}&\text{if }i\leq qm\\
		t_{i-qm}&\text{if }i>qm.
	\end{cases}
\]
Note that $\h'$ is one of the components of $X_q\subseteq\h$. By Proposition \ref{flatconn}, we have a functor
\begin{eqnarray*}
	Loc^q:H_c\rs^q&\rightarrow&\C[\Sqp]\ltimes D(\hr')\rs,\\
	M&\mapsto&\cO(\hr')\otimes_{\C[\h]}M,
\end{eqnarray*}
where $\C[\Sqp]\ltimes D(\hr')\rs$ denotes the category of coherent $\Sqp$-equivariant $D(\hr')$-modules with regular singularities. 
\begin{lemm}\label{locqexact}
The functor $Loc^q$ is exact, and induces a faithful functor
\[
	H_c\rs^q/H_c\rs^{q+1}\rightarrow\C[\Sqp]\ltimes D(\hr')\rs,
\]
which we also denote by $Loc^q$.
\end{lemm}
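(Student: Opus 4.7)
The plan is to prove exactness of $Loc^q$ on $H_c\rs^q$ and vanishing of $Loc^q$ on $H_c\rs^{q+1}$; faithfulness of the induced functor on the Serre quotient then follows formally, since if $Loc^q(f)=0$, exactness gives $Loc^q(\im f)=0$, whence the vanishing statement places $\im f\in H_c\rs^{q+1}$, so $f$ represents zero in $H_c\rs^q/H_c\rs^{q+1}$.

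For the vanishing, any $M\in H_c\rs^{q+1}$ has $\supp M\subseteq X_{q+1}$, which is disjoint from $\hr'$: a point of $\hr'$ has stabiliser exactly $S_m^q$, partitioning its $n=qm+p$ coordinates into $q$ blocks of $m$ equal values and $p<m$ distinct singletons, while membership in $X_{q+1}$ demands at least $q+1$ disjoint size-$m$ classes of equal coordinates, impossible on $\hr'$ since $p<m$ precludes any further size-$m$ block. Hence $\cO(\hr')\otimes_{\C[\h]}M=0$.

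For exactness, I would mimic the strategy of Theorem \ref{minsupp}, factoring $Loc^q$ as a composition of exact functors. Restriction to the open $W$-invariant subset $U=\h\setminus X_{q+1}$ is an exact localisation; for $M\in H_c\rs^q$, by Theorem \ref{coh} the restriction $M|_U$ is set-theoretically supported on $W\cdot\hr'$ and scheme-theoretically bounded in a fixed infinitesimal thickening. Passage to $eH_c(S_n,U)e\coh$ via the idempotent $e=\frac{1}{n!}\sum_{w\in S_n}w$ is an equivalence by Lemma \ref{HeH}. The core step is to construct an analogue for general $q$ of the homomorphism $\sigma$ of Proposition \ref{imsigma}, namely an algebra map from $eH_c(S_n,U)e$, modulo the image of the two-sided ideal of $H_c$ from Theorem \ref{twosided} annihilating the appropriate truncation of the polynomial representation, to an idempotent truncation $e'H_{(m,c)}(\Sqp,U')e'$ for some open $U'\subseteq\h'$, with $e'=\frac{1}{q!p!}\sum_{w\in\Sqp}w$. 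Quotienting by this kernel is exact; reversing Morita via $e'$ and localising from $U'$ to $\hr'$ are also exact, and the composition yields $Loc^q$.

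The principal obstacle is constructing this generalised $\sigma$ and identifying its image and kernel. Proposition \ref{imsigma} handled only the minimally supported case $q=\lfloor n/m\rfloor$ via explicit power-sum identities among Dunkl operators $p_{\ch x}(2)$, $p_{\ch z}(2)$ and $p_{\ch t}(2)$; for general $q$ one must track how Dunkl operators act in the presence of both the $q$ collapsed size-$m$ blocks and the $p$ singleton coordinates, verifying that the image lands in the expected subalgebra and, using Theorem \ref{twosided}, that the kernel is exactly the asserted two-sided ideal.
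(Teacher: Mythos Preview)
Your faithfulness reduction at the start is fine and matches the paper's ``general categorical considerations.'' Your vanishing argument has a minor slip: in this section $q$ is arbitrary with $0\leq qm\leq n$, so $p=n-qm$ need not satisfy $p<m$ (that inequality only held in Section~\ref{Sec:minsupp}). The conclusion $\hr'\cap X_{q+1}=\emptyset$ is still correct, but the reason is that the $p$ remaining coordinates at a point of $\hr'$ are pairwise distinct (this is what forces the stabiliser to be exactly $S_m^q$), so no further size-$m$ block of equal coordinates can be formed regardless of the size of $p$.

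The real gap is in your exactness argument. You propose to factor $Loc^q$ through a generalisation of the homomorphism $\sigma$ of Proposition~\ref{imsigma} to arbitrary $q$, and you yourself flag this construction as the ``principal obstacle,'' leaving it undone. So as written you have not proved exactness. The paper avoids this entirely with a short direct argument that you are missing: since $M\in H_c\rs^q$ is set-theoretically supported on $X_q$, some power of $I_q$ kills $M$; but we are at $c=\frac1m$, and Theorem~\ref{twosided} says $I_q$ is \emph{radical} in this case, so in fact $I_qM=0$. Now choose any $b\in\hr'$ and let $U=\{x\in\h\mid x_i\neq x_j\text{ whenever }b_i\neq b_j\}$, an affine open on which $U\cap X_q=U\cap\h'=\hr'$. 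Then $\cO(\hr')=\cO(U)/I_q\cO(U)$, so for $M$ with $I_qM=0$ one has $\cO(\hr')\otimes_{\C[\h]}M\cong\cO(U)\otimes_{\C[\h]}M$, and the right-hand functor is exact because it is a localisation. No Morita equivalence, no analogue of $\sigma$, is needed here; the radicality of $I_q$ at $c=\frac1m$ is the key point that turns $Loc^q$ into an honest open restriction.
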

\begin{proof}
Any $M\in H_c\rs^q$ is annihilated by some power of $I_q$. By Theorem \ref{twosided}, $I_qM=0$. Let $b$ be any point in $\hr'$, and let $U\subseteq\h$ denote the affine Zariski open set
\[
	U=\{x\in\h\mid x_i\neq x_j\text{ whenever }b_i\neq b_j\}.
\]
Then $U\cap\h'=U\cap X_q=\hr'$, so $I_q\cO(U)\rightarrow\cO(U)\rightarrow\cO(\hr')$ is right exact. Thus 
$\cO(U)\otimes_{\C[\h]}M\rightarrow\cO(\hr')\otimes_{\C[\h]}M$ is an isomorphism. Since $M\mapsto\cO(U)\otimes_{\C[\h]}M$ is exact, so is $Loc^q$. Clearly the objects killed by $Loc^q$ are exactly those in $H_c\rs^{q+1}$. The 
result now follows from general categorical considerations.
\end{proof}
We now want to determine the ``image" of $Loc^q$. The Riemann-Hilbert correspondence \cite{Deligne} gives an equivalence of categories
\begin{eqnarray*}
	\C[\Sqp]\ltimes D(\hr')\rs
		&\isom&D(\hr'/\Sqp)\rs\\
		&\isom&\pi_1(\hr'/\Sqp){\rm-mod}_{\rm fd},
\end{eqnarray*}
where the latter denotes the category of finite dimensional representations of the group $\pi_1(\hr'/\Sqp)$ over $\C$. Let $\phi$ denote the composite functor. Under the identification 
$\h'=\Spec\C[z_1,\ldots,z_q,\,t_1,\ldots,t_p]\cong\C^{q+p}$, the open subset $\hr'$ consists of points with all coordinates distinct. By the correspondence between covering spaces of $\hr'/S_{q+p}$ and 
subgroups of $\pi_1(\hr'/S_{q+p})$, we have a homomorphism
\[
	\mu:\pi_1(\hr'/S_{q+p})\twoheadrightarrow S_{q+p},
\]
and $\pi_1(\hr'/S_q\times S_p)=\mu^{-1}(S_q\times S_p)$. The fundamental group $\pi_1(\hr'/S_{q+p})$ is known as the \emph{braid group} $B_{q+p}$. We need to describe some explicit elements 
of $B_{q+p}$. Suppose $x_1,\,x_2,\ldots,x_{q+p}\in\C$ are the vertices of a convex $(q+p)$-gon in the complex plane, listed counterclockwise. We take the point
\[
	\bx=(x_1,x_2,\ldots,x_{q+p})\in\C^{q+p}\cong\h'
\]
as the basepoint of $\pi_1(\hr')$. Suppose $1\leq i,j\leq q+p$ and $i\neq j$. Pick $\epsilon>0$ such that $2\epsilon|x_i-x_j|<|x_i+x_j-2x_k|$ for all $k$. Let $\gamma_{ij}:[0,1]\rightarrow\hr'$ be the path
\[
	\gamma_{ij}(t)=\begin{cases}
		ts_{ij}\bx+(1-t)\bx&\text{if }|t-\frac{1}{2}|>\epsilon\\
		\frac{1}{2}(s_{ij}\bx+\bx)+i\epsilon e^{\frac{1}{2\epsilon}i\pi(t-\frac{1}{2})}(\bx-s_{ij}\bx)&\text{otherwise}.
	\end{cases}
\]
from $\bx$ to $s_{ij}\bx$. Geometrically, as $\gamma_{ij}$ is traversed, $x_i$ and $x_j$ switch positions linearly, traversing small semicircles counterclockwise around $\frac{1}{2}(x_i+x_j)$ to avoid 
intersecting. Pushing this path down to $\hr'/S_{q+p}$ gives an element $S_{ij}\in B_{q+p}$ such that $\mu(S_{ij})=s_{ij}$. Note that there is a unique component $Z$ of $\h'\setminus\hr'$ passing through 
$\frac{1}{2}(s_{ij}\bx+\bx)$, and for $M\in\C[\Sqp]\ltimes D(\hr')\rs$, the action of $S_{ij}$ on $\phi(M)$ is conjugate to the monodromy around $Z$ of the induced local system on $\hr'/\Sqp$.
\begin{lemm}
We have a surjection $\pi_1(\hr'/\Sqp)\twoheadrightarrow B_q\times B_p$ whose kernel is generated by $\{S_{ij}^2\mid i\leq q,\,j>q\}$.
\end{lemm}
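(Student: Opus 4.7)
I will identify $\hr'/\Sqp$ with the complement of a single irreducible divisor in a smooth ambient variety whose fundamental group is $B_q\times B_p$. Consider
\[
X=(\C^q\setminus\Delta_q)/S_q\times(\C^p\setminus\Delta_p)/S_p,
\]
where $\Delta_n\subseteq\C^n$ denotes the ``big diagonal.'' Each factor is smooth (isomorphic via elementary symmetric polynomials to the complement of the discriminant in $\C^n$) and has fundamental group $B_q$, respectively $B_p$; hence $\pi_1(X)=B_q\times B_p$. Identifying $\h'\cong\C^{q+p}$ as in the excerpt, the identity on $\C^{q+p}$ descends to an open immersion $\hr'/\Sqp\hookrightarrow X$ whose complement is the image $D$ of $\bigcup_{i,j}\{z_i=t_j\}\subseteq(\C^q\setminus\Delta_q)\times(\C^p\setminus\Delta_p)$. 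This preimage is a union of smooth irreducible codimension-one subvarieties permuted transitively by $\Sqp$, so $D$ is an irreducible divisor in $X$.

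Next I would invoke the standard fact that for any smooth complex variety $X$ and any irreducible closed codimension-one subvariety $D\subseteq X$, the open inclusion $X\setminus D\hookrightarrow X$ induces a surjection $\pi_1(X\setminus D)\twoheadrightarrow\pi_1(X)$ whose kernel is normally generated by the class of a small meridian loop around $D$; topologically this comes from a tubular neighbourhood of the smooth locus of $D$ together with $\pi_1(\C^*)=\Z$, and is also a special case of the Zariski--van Kampen theorem.

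It then remains to identify these meridian generators as the $S_{ij}^2$. For $i\leq q<j$, we have $\mu(S_{ij}^2)=s_{ij}^2=e\in\Sqp$, so $S_{ij}^2\in\pi_1(\hr'/\Sqp)$. Inspecting the explicit path $\gamma_{ij}$, the loop $\gamma_{ij}\cdot\gamma_{ij}$ based at $\bx$ has the coordinate $x_j=t_{j-q}$ encircling $x_i=z_i$ exactly once along a small circle around $\tfrac12(x_i+x_j)$, while staying away from all other diagonals; pushed down to $X$ this is a small meridian loop around the component $\{z_i=t_{j-q}\}$ of the preimage of $D$, hence a meridian around $D$ itself. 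By the key fact, any single $S_{ij}^2$ already normally generates the kernel, so the full collection does. The principal obstacle is making this last geometric identification precise: one must verify that the semicircular portion in the definition of $\gamma_{ij}$ lies in a small polydisc meeting the smooth stratum of $D$ transversely at $\tfrac12(x_i+x_j)$, so that squaring it yields a generator of the local $\pi_1=\Z$ of a transverse punctured disc.
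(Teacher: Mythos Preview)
Your proposal is correct and follows essentially the same route as the paper: both identify $\hr'/\Sqp$ with the complement of the single irreducible divisor $D=\{z_i=t_j\}$ inside $(U_q/S_q)\times(U_p/S_p)$, use $\pi_1$ of the ambient space $=B_q\times B_p$, and invoke the van Kampen/meridian argument to conclude that the kernel is normally generated by a loop around $D$, which is then identified with any $S_{ij}^2$ for $i\leq q<j$. The only cosmetic difference is that the paper remarks $Z$ is $\R^*$-invariant (hence contractible) to justify the tubular-neighbourhood form of van Kampen directly, whereas you cite the general Zariski--van Kampen fact; the content is the same.
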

\begin{proof}
Let $U_q\subseteq\C^q$ and $U_p\subseteq\C^p$ denote the open subsets on which all coordinates are distinct. We have a natural continuous map $\hr'\rightarrow U_q\times U_p$, and the natural action of $\Sqp$ on 
the latter space makes this map equivariant. We may therefore identify $\hr'/\Sqp$ with an open subset of $(U_q/S_q)\times(U_p/S_p)$, and the complement $Z$ is the image of
\[
	\{\bx\in U_q\times U_p\subseteq\C^{q+p}\mid x_i=x_j\text{ for some }i\leq q,\,j>q\}.
\]
Note that $(U_q/S_q)\times(U_p/S_p)$ is a smooth complex variety, and $Z$ is an irreducible divisor. Moreover $Z$ is invariant under the action of $\R^*$, and is therefore contractible. Van Kampen's Theorem now gives a surjection
\[
	\pi_1(\hr'/\Sqp)\twoheadrightarrow\pi_1((U_q/S_q)\times(U_p/S_p))=B_q\times B_p,
\]
whose kernel is generated by a loop around $Z$. That is, the kernel is generated by $S_{ij}^2$ for any $i\leq q$ and $j>q$.
\end{proof}
The group $B_q$ has a standard presentation; namely it is generated by $\{S_{i,i+1}\mid1\leq i<q\}$ subject to the braid relations
\begin{eqnarray*}
	S_{i,i+1}S_{i+1,i+2}S_{i,i+1}&=&S_{i+1,i+2}S_{i,i+1}S_{i+1,i+2},\\
	S_{i,i+1}S_{j,j+1}&=&S_{j,j+1}S_{i,i+1}\text{if }|i-j|>1.
\end{eqnarray*}
The kernel of $B_q\rightarrow S_q$ is generated by $\{S_{ij}^2\}$. For any $\bq\in\C^*$, the \emph{Hecke algebra of type $A_{p-1}$} 
is the algebra
\[
	H_\bq(S_p)=\C[B_p]/\langle(S_{ij}-1)(S_{ij}+\bq)\rangle.
\]
We will take $\bq=e^{2\pi i/m}$. Combining the above, we have a surjection
\[
	\psi:\C[\pi_1(\hr'/\Sqp)]\twoheadrightarrow\C[S_q]\otimes_\C H_\bq(S_p)
\]
whose kernel is generated by
\begin{equation}\label{Arels}
	\{S_{ij}^2-1\mid i\leq q\text{ or }j\leq q\}\cup\{(S_{ij}-1)(S_{ij}+\bq)\mid i,j>q\}.
\end{equation}
This gives rise to a fully faithful embedding
\[
	\psi^*:\C[S_q]\otimes_\C H_\bq(S_p){\rm-mod}_{fd}\rightarrow\pi_1(\hr'/\Sqp){\rm-mod}_{fd},
\]
whose image is the full subcategory $\mathcal A$ of $\pi_1(\hr'/\Sqp){\rm-mod}_{fd}$ consisting of modules on which the relations (\ref{Arels}) vanish.
\begin{lemm}\label{locqmon}
The composite functor $\phi\,Loc^q$ sends each module into $\mathcal A$.
\end{lemm}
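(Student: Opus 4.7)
The strategy is to verify the relations in (\ref{Arels}) on the monodromy of $\phi\,Loc^q(M)$ around each of the three types of irreducible divisor of $\h' \setminus \hr'$: (i) $\{z_i = z_j\}$ with $i, j \leq q$, (ii) $\{z_i = t_j\}$, and (iii) $\{t_i = t_j\}$ with $i, j > q$. Since $S_{ij}$ acts as the monodromy around the corresponding divisor, the claim reduces to showing that this monodromy satisfies the prescribed polynomial equation in each case. For each divisor $Z$, I would pick a generic point $b \in Z$ with stabiliser $W' \subseteq S_n$, namely $W' = S_{2m} \times S_m^{q-2}$ in (i), $W' = S_{m+1} \times S_m^{q-1}$ in (ii), and $W' = S_m^q \times S_2$ in (iii), and apply Proposition \ref{resmon} with $W'' = S_m^q$ and $v$ transverse to $Z$ in $\h'$. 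This replaces the monodromy of $\phi\,Loc^q(M)$ around $Z$ by the analogous monodromy of $Res_bM \in H_c(W',\h)\rs$.

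In each case $W'$ has a product structure so that $H_c(W', \h)$ decomposes as a tensor product of smaller rational Cherednik algebras (tensored with a Weyl algebra on the inert coordinates), and the monodromy around $Z$ depends only on the nontrivial factor: $H_{1/m}(S_{2m}, \C^{2m})$ in (i), $H_{1/m}(S_{m+1}, \C^{m+1})$ in (ii), and $H_{1/m}(S_2, \C^2)$ in (iii). In cases (i) and (ii) the hypothesis $\supp M \subseteq \overline{X_q}$ forces this factor of $Res_bM$ to be minimally supported with respect to the smaller algebra, so Theorem \ref{minsupp} applies. In (i), with $n' = 2m$, $q' = 2$, $p' = 0$, the equivalence $F$ identifies the relevant local system with one coming from an $H_m(S_2,\C^2)\rs$-module, whose monodromy around $z_1 = z_2$ is a reflection with Hecke parameter $e^{2\pi i m} = 1$, yielding $(T-1)(T+1) = 0$ and hence $S_{ij}^2 = 1$. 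In (ii), with $n' = m+1$, $q' = p' = 1$, the equivalence produces a $D(\C^2)$-module with regular singularities, and Claim 1 in the proof of Theorem \ref{minsupp} gives trivial monodromy around $z_1 = t_1$, so $S_{ij} = 1$ and $S_{ij}^2 = 1$ trivially. In case (iii) no further reduction is required: a direct rank-one calculation of the connection $\nabla_y m = ym - (c\langle y,\alpha_s\rangle/\alpha_s)(s-1)m$ on $\C^2 \setminus \{\alpha_s = 0\}$ (the standard KZ computation) yields the Hecke relation $(T-1)(T+\bq) = 0$ with $\bq = e^{2\pi i c} = e^{2\pi i/m}$.

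The main technical obstacle lies in setting up the reduction cleanly: verifying that Proposition \ref{resmon} applies with the correct choice of $b$, $v$, and $C$ in each case; that the tensor decomposition of $H_c(W', \h)$ really isolates the single factor controlling the monodromy around $Z$; and that the relevant factor of $Res_bM$ genuinely lands in the minimally supported subcategory to which Theorem \ref{minsupp} applies. Once these reductions are justified, the three rank-one monodromy computations are elementary, and the three relations in (\ref{Arels}) follow immediately.
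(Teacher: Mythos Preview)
Your proposal is correct and follows essentially the same approach as the paper: reduce via Proposition~\ref{resmon} to the local picture at a generic point of each of the three divisor types, isolate the relevant tensor factor of $H_c(W',\h)$, and then invoke Theorem~\ref{minsupp} in cases (i)--(ii) and the direct KZ computation in case (iii). The only cosmetic difference is in case (i), where the paper checks the two irreducibles of $H_m(S_2,\C^2)\rs$ by hand via Proposition~\ref{flatconn}, while you obtain $S_{ij}^2=1$ from the Hecke relation with parameter $e^{2\pi i m}=1$; both arguments are valid and amount to the same thing.
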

\begin{proof}
Consider any $M\in H_c\rs^q$. We must show that each of the relations (\ref{Arels}) vanish on $\phi\,Loc^q(M)$. That is, for each component $Z$ of $\h'\setminus\hr'$, we must show that the monodromy 
of the induced local system on $\hr'/\Sqp$ around $Z$ satisfies the appropriate equation. Let $b\in Z$ be a ``generic" point, that is, chosen from $Z$ so that its stabiliser $W'\subseteq S_n$ is 
minimal. By Proposition \ref{resmon}, it suffices to prove the appropriate equation for the monodromy of the local system corresponding to $N=Res_bM$. There are three possibilities, depending on which 
two coordinates we have set equal.

\textbf{Case 1}: $W'\cong S_m^{q-2}\times S_{2m}$. We are required to show that the monodromy of the local system on $\hr'$ (ignoring the equivariance structure) around $Z$ is trivial. The monodromy 
depends only on the action of $H_c(S_{2m},\C^{2m})\subseteq H_c(W',\h)$, so we may suppose $n=2m$. But $N$ has minimal support by Theorem \ref{resind}(2). By Theorem \ref{minsupp}, it suffices 
to consider the module $F(N)\in H_m(S_2,\C^2)\rs$. However, the latter category is semisimple with irreducibles $L(\tau_{(2)})$ and $L(\tau_{(1,1)})$, so it suffices to check that these modules 
give rise to local systems with trivial monodromy around the diagonal. This is clear from the description of the connection given in Proposition \ref{flatconn}.

\textbf{Case 2}: $W'\cong S_m^{q-1}\times S_{m+1}$. Again we must show that the monodromy of the local system on $\hr'$ around $Z$ is trivial. Now the monodromy depends only on the action of 
$H_c(S_{m+1},\C^{m+1})\subseteq H_c(W',\h)$, so we may suppose $n=m+1$. The result follows as in the proof of Claim 1 in Theorem \ref{minsupp}.

\textbf{Case 3}: $W'\cong S_m^q\times S_2$. This case is well known (see \cite{GGOR} Theorem 5.13) but we give the calculation here for convenience. Now the monodromy depends on the action of 
$H_c(S_2,\C^2)\subseteq H_c(W',\h)$, so we may suppose $n=2$.  Since $\h$ acts locally nilpotently on $N$, we can find a surjection
\[
	(H_c(S_2,\C^2)/H_c(S_2,\C^2)\h^k)^{\oplus l}\twoheadrightarrow N
\]
for some $k$ and $l$. We may replace $N$ by the former module. Then
\[
	V=(\C[S_2]\otimes_\C\C[\h^*]/\h^k)^{\oplus l}\subseteq N
\]
freely generates $N$ over $\C[\h]$ and is invariant under the actions of $\h$ and $S_2$. By Proposition \ref{flatconn}, the corresponding $D$-module on $\hr$ is freely generated over $\cO(\hr)$ by $V$, and has connection
\[
	\nabla_yv=yv-\frac{\langle y,\alpha_s\rangle}{\alpha_s}c(s-1)v
\]
for $v\in V$, where $S_2=\{1,s\}$. In particular, the residue of this connection acts as $0$ on the $1$-eigenspace of $s$ in $V$, and as $2c$ on the $-1$-eigenspace. Therefore after pushing down to $\h/S_2$, the monodromy 
acts as $1$ on the $1$-eigenspace and $-e^{2\pi ic}=-\bq$ on the $-1$-eigenspace. This proves the required relation.
\end{proof}
\begin{prop}\label{LoqqA}
The functor $Loc^q$ induces an equivalence
\[
	H_c\rs^q/H_c\rs^{q+1}\cong\phi^{-1}\mathcal A.
\]
\end{prop}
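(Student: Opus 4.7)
The plan is to verify that the functor $Loc^q\colon H_c\rs^q/H_c\rs^{q+1}\to\phi^{-1}\mathcal{A}$ is an equivalence. Exactness and faithfulness on the Serre quotient are already provided by Lemma \ref{locqexact}, and Lemma \ref{locqmon} guarantees that $Loc^q$ factors through $\phi^{-1}\mathcal{A}$. What remains is to establish fullness and essential surjectivity.

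For essential surjectivity, I would construct a quasi-inverse using parabolic restriction. Pick a generic point $b\in\hr'$ with stabiliser $W'=S_m^q$. Near $b$, the algebra $H_c(W',\h)$ decomposes (after separating off the $W'$-fixed coordinates) as a tensor product of $q$ copies of $H_c(S_m,\C^m)$ with the Weyl algebra on the remaining $p$ coordinates. Applying Theorem \ref{minsupp} to each $H_c(S_m,\C^m)$ factor, together with the Riemann--Hilbert correspondence on the last factor, identifies the category of $H_c(W',\h)\rs$-modules with support on $\h'$ near $b$ with an appropriate category of $\cO$-coherent $\D$-modules on $\h'$. Now, given $\cN\in\phi^{-1}\mathcal{A}$, the trivial monodromy conditions around the divisors $\{z_i=z_j\}$ and $\{z_i=t_j\}$ allow Deligne extension of $\cN$ across these divisors, producing a coherent $\Sqp$-equivariant $\D$-module $\widetilde{\cN}$ on the complement in $\h'$ of only the divisors $\{t_i=t_j\}$ with $i,j>q$. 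Via the minimal-support equivalence, $\widetilde{\cN}$ corresponds to an $H_c(W',\h)$-module $N$. Applying the parabolic induction functor $Ind_b$ of Theorem \ref{resind} then yields $M:=Ind_b(N)\in H_c\rs^q$, and Proposition \ref{resmonglobal}, together with the adjunction of Theorem \ref{resind}(1), is used to verify $Loc^q(M)\cong\cN$ in $\phi^{-1}\mathcal{A}$.

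For fullness, I would use the standard graph-of-morphism argument, leveraging essential surjectivity and the exactness of $Loc^q$. Given $M_1,M_2\in H_c\rs^q$ and a morphism $f\colon Loc^q(M_1)\to Loc^q(M_2)$ in $\phi^{-1}\mathcal{A}$, the graph $\Gamma_f$ is a subobject of $Loc^q(M_1\oplus M_2)$. Since $\phi^{-1}\mathcal{A}$ is closed under subquotients and we have essential surjectivity, $\Gamma_f\cong Loc^q(N)$ for some $N$, and the two projections yield a roof $M_1\leftarrow N\to M_2$ representing $f$ in the Serre quotient.

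The principal obstacle is essential surjectivity — specifically, constructing the $H_c$-module from the Hecke-type monodromy data along the divisors $\{t_i=t_j\}$, and verifying that the inductively built module $M=Ind_b(N)$ genuinely has support equal to $X_q$ (rather than falling into a strictly larger support stratum that would wreck the match with $\cN$ after $Loc^q$). This requires careful bookkeeping for the patching in Theorem \ref{minsupp}, and control over how $Ind_b$ interacts with regular singularities via Proposition \ref{resmonglobal}.
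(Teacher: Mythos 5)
Your outline correctly records what is already proven (exactness, faithfulness, and factoring through $\phi^{-1}\mathcal A$ from Lemmas \ref{locqexact} and \ref{locqmon}), and your fullness argument via the graph of a morphism is standard and correct given essential surjectivity. The problem is essential surjectivity, and your strategy for it is genuinely different from the paper's but, as sketched, has real gaps.

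The paper does not use $Ind_b$. It constructs a quasi-inverse $G$ directly. First, an auxiliary functor $\bar G$ is defined via Propositions \ref{local}, \ref{formal}, and \ref{linearH} applied to the \'etale morphism $\bar U=S_n\times_N U\to\h$ with $U\cong\hr'\times\h^\perp$: one passes to the formal neighbourhood, uses the one-dimensional representation of $H_c(S_m^q,\h^\perp)$ supplied by Theorem \ref{finite}, and obtains $\bar GM\cong\C[S_n]\otimes_{\C[N]}M$ with explicit Dunkl-type action formulas. This module is not coherent, and the real technical work is to cut out the right finitely generated $H_c$-submodule $GM$. This is done by defining, for each component $Z$ of $\h'\setminus\hr'$, a subspace $D_ZM\subseteq M$ by bounding pole orders of sections against a flat frame on a punctured analytic ball, using the assumed monodromy eigenvalues ($1$ and $-\bq$ along $\{t_i=t_j\}$, trivial along the other divisors) to control the exponents. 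The intersection $DM$ is shown to be finitely generated over $\pi(\C[\h]^{S_n})$ and stable under the Calogero--Moser operator $\zeta$, and then $GM=H_c\cdot(\text{symmetrised }DM)$ lies in $H_c\rs^q$ with $Loc^qGM\cong M$.

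Your proposal replaces this with a two-step plan: build an $H_c(W',\h)\rs$-module $N$ from $\cN$ via Deligne extension and a minimal-support equivalence, then set $M=Ind_b(N)$. There are two concrete issues. First, the object you need is not an $H_c(W',\h)$-module but rather a $\C[N_{S_n}(W')]\ltimes_{\C[W']}H_c(W',\h)$-module, since the datum $\cN$ carries $\Sqp$-equivariance and $\Sqp\cong N_{S_n}(W')/W'$; Theorem \ref{minsupp} as stated applies to $S_n$ acting on $\C^n$, not to $W'=S_m^q$, so invoking it here requires a tensor-product variant together with the extra equivariance, which you do not supply. Second, and more seriously, the claim $Loc^q(Ind_b N)\cong\cN$ is not justified by the tools you cite. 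Theorem \ref{resind}(1) gives $Ind_b$ as a right adjoint to $Res_b$, which yields a counit $Res_bInd_bN\to N$ but not an isomorphism, let alone a natural identification of $Loc^q\circ Ind_b$ with the localization on the $W'$-side. Proposition \ref{resmonglobal} runs in the opposite direction --- it recovers $Res_bM$ from the germ of $M$ --- and it carries hypotheses (scheme-theoretic support, trivial monodromy around $Z(\alpha_s)$ for $s\in S\setminus W'$) that you would have to verify for $Ind_b N$ before it could be invoked. Your own last paragraph flags exactly this as the obstacle; in the paper it is overcome not by $Ind_b$ but by the explicit $D_ZM$ pole-order analysis, and that analysis is the content that a complete proof must supply.
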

\begin{proof}
By the above lemmas, $Loc^q$ induces a faithful functor
\[
	H_c\rs^q/H_c\rs^{q+1}\rightarrow\phi^{-1}\mathcal A,
\]
and it remains to show that this functor is full and essentially surjective. Both statements will follow from the existence of a functor $G:\phi^{-1}\mathcal A\rightarrow H_c\rs^q$ such that $Loc^qG$ is naturally 
equivalent to the identity functor. This will take some work, so we proceed in a sequence of lemmas.

\begin{lemm}
Let $N\subseteq S_n$ denote the normaliser of the subgroup $S_m^q\subseteq S_n$, and choose a set $C$ of left coset representatives for $N$ in $S_n$. Let $\h^\perp$ denote the orthogonal complement to $\h'$ with respect 
to the natural $S_n$-invariant inner product on $\h=\C^n$. There is a functor
\[
	\bar G:\C[\Sqp]\ltimes D(\hr')\rs\rightarrow H_c(S_n,\h){\rm-mod}
\]
such that
\[
	\bar GM\cong\C[S_n]\otimes_{\C[N]}M=\bigoplus_{w\in C}wM
\]
as an $S_n$-module, and the actions of $x\in\h^*$ and $y\in\h$ are given by
\begin{eqnarray*}
	x(w\otimes v)&=&w\otimes\pi(x^w)v,\\
	y(w\otimes v)&=&w\otimes\nabla_{\rho(y^w)}v-\frac1m\hspace{-4mm}\sum_{i,j\atop\pi(x_i)\neq\pi(x_j)}\hspace{-4mm}\langle y^w,x_i-x_j\rangle(w+ws_{ij})\otimes\frac{1}{\pi(x_i-x_j)}v,
\end{eqnarray*}
where $\pi:\C[\h]\twoheadrightarrow\C[\h']$ is as above, and $\rho:\h\twoheadrightarrow\h'$ is the projection with kernel $\h^\perp$. Note that if $\pi(x_i)\neq\pi(x_j)$, then $\pi(x_i-x_j)$ is invertible in $\cO(\hr')$.
\end{lemm}
\begin{proof}
Under the natural identification $\h'\times\h^\perp\cong\h$, the set $\hr'\times\h^\perp$ is identified with an open subset $U\subseteq\h$. Let
\[
	\bar U=S_n\times_N U=\coprod_{w\in C}wU.
\]
We have a natural \'etale morphism $\bar U\rightarrow\h$, so by Propositions \ref{local}, \ref{formal} and \ref{linearH} we have algebra homomophisms
\[
	H_c(S_n,\h)\rightarrow H_c(S_n,\bar U)\rightarrow\Mat_{[S_n:N]}(\C[N]\ltimes_{\C[S_m^q]}H_c(S_m^q,\Spf\hO_{U,\hr'})),
\]
But
\[
	H_c(S_m^q,\Spf\hO_{U,\hr'})=\hO_{U,\hr'}\otimes_{\cO(U)}H_c(S_m^q,U)
		=\left(\lim_{\leftarrow}\cO(U)/I^k\right)\otimes_{\cO(U)}H_c(S_m^q,U),
\]
where $I\subseteq\cO(U)$ is the kernel of $\cO(U)\rightarrow\cO(\hr')$. Since $U\cong\hr'\times\h^\perp$, we have
\[
	H_c(S_m^q,U)=H_c(S_m^q,\h^\perp)\otimes_\C D(\hr').
\]
Moreover $\h^\perp\cong(\C^m/\C)^q$, so by Theorem \ref{finite} and Lemma \ref{nilp} we have an algebra homormorphism $H_c(S_m^q,\h^\perp)\rightarrow\C$ whose kernel contains $(\h^\perp)^*$. This induces a homomorphism
\[
	H_c(S_m^q,\h^\perp)\otimes_\C D(\hr')\rightarrow D(\hr')
\]
which kills $IH_c(S_m^q,\h^\perp)\otimes_\C D(\hr')$. This therefore factors through the completion, and we obtain
\[
	\C[N]\ltimes_{\C[S_m^q]}H_c(S_m^q,\hO_{U,\hr'})\rightarrow\C[N]\ltimes_{\C[S_m^q]}D(\hr')=\C[N/S_m^q]\ltimes D(\hr').
\]
Since $N/S_m^q\cong\Sqp$, we obtain functors
\begin{eqnarray*}
	\C[\Sqp]\ltimes D(\hr')\rs
		&\rightarrow&\C[N]\ltimes_{\C[S_m^q]}H_c(S_m^q,\hO_{U,\hr'}){\rm-mod}\\
		&\cong&\Mat_{[S_n:N]}(\C[N]\ltimes_{\C[S_m^q]}H_c(S_m^q,\hO_{U,\hr'})){\rm-mod}\\
		&\rightarrow&H_c(S_n,\h){\rm-mod}.
\end{eqnarray*}
Let $\bar G$ be the composite functor. Following through the construction in Proposition \ref{linearH}, we see that $\bar GM$ is exactly as descibed in the statement.
\end{proof}
Now let
\[
	\ch z_i=m\rho(\ch x_{mi})=\sum_{j=0}^{m-1}\ch x_{mi-j},\hspace{10mm}
	\ch t_a=\rho(\ch x_{mq+a})=\ch x_{mq+a}.
\]
These form the basis of $\h'$ dual to the basis $\{z_i,t_a\}$ of $(\h')^*$. As in the previous section, denote
\[
	p_{\ch x}(2)=\sum_{i=1}^n(\ch x_i)^2\in H_c(S_n,\h),
\]
and similarly for $p_{\ch z}(2)$ and $p_{\ch t}(2)$.
\begin{lemm}
The element $\eu\in H_c(S_n,\h)$ acts locally finitely on $\bar GM$. Also $p_{\ch x}(2)$ acts on symmetric elements of $\bar GM$ via the formula
\[
	p_{\ch x}(2)\sum_{w\in S_n}w\otimes v=\sum_{w\in S_n}w\otimes\zeta v,
\]
where $\zeta\in D(\hr')$ is the differential operator
\[
	\zeta=\frac1mp_{\ch z}(2)+p_{\ch t}(2)-2\sum_{i\neq j}\frac{\ch z_i-\ch z_j}{z_i-z_j}
			-\frac2m\sum_{i,a}\frac{\ch z_i-m\ch t_a}{z_i-t_a}-\frac2m\sum_{a\neq b}\frac{\ch t_a-\ch t_b}{t_a-t_b}.
\]
\end{lemm}
\begin{proof}
Consider the Euler vector field
\[
	\xi=\sum_{i=1}^qz_i\ch z_i+\sum_{i=1}^rt_i\ch t_i\in D(\h').
\]
This acts locally finitely on $M$ by Lemma \ref{monodromic}. Calculating the action of the Euler element $\eu\in H_c$ on $v\in M\subseteq\bar GM$ gives
\[
	\eu\,v=\nabla_\xi v-\frac{n(n-1)}{2m}v.
\]
Since $\eu$ centralises $S_n$, we conclude that $\eu$ acts locally finitely on $\bar GM$. The second statement also follows by a straightforward calculation.
\end{proof}
Clearly the support of $\bar GM$ lies in $X_q$, and $\cO(\hr')\otimes_{\C[\h]}\bar GM\cong M$. If $\bar GM$ were finitely generated over $C[\h]$, this would be the required module in $H_c\rs^q$. Unfortunately 
it is too large. We will construct the required module $GM$ as a submodule of $\bar GM$. For each irreducible component $Z$ of $\h'\setminus\hr'$, let $x_Z\in(\h')^*$ be an element with kernel $Z$. Let
\[
	\alpha=\prod_Zx_Z^2\in\C[\h']^{\Sqp}.
\]
Note that the zero set of $\alpha$ is exactly $\h'\setminus\hr'$.
\begin{lemm}
For each component $Z$ of $\h'\setminus\hr'$, there is a subspace $D_ZM\subseteq M$ satisfying:
\begin{enumerate}
\item $D_ZM$ is functorial in $M$,
\item $D_ZM$ is preserved by the actions of $\zeta$ and $\pi(\C[\h]^{S_n})\subseteq\C[\h']$
\item
	If $U\subseteq\h'$ is a Zariski open subset such that $V\subseteq\cO(U\cap\hr')\otimes_{\cO(\hr')}M$ freely generates $\cO(U\cap\hr')\otimes_{\cO(\hr')}M$ over $\cO(U\cap\hr')$, then 
	setting
	\[
		U'=U\cap{\rm int}(\hr'\cup Z),
	\]
	we have $D_ZM\subseteq x_Z^{-K}\cO(U')V$ for some integer $K$.
\item
	If $M\in\phi^{-1}\mathcal A$ then $M^{\Sqp}=\C[\alpha^{-1}](D_ZM)^{\Sqp}$.
\end{enumerate}
\end{lemm}
\begin{proof}
Choose a generic point $b\in Z$ with stabiliser $W'\subseteq S_n$. Let $B\subseteq\h'$ 
be an open ball around $b$ which doesn't intersect the other components of $\h'\setminus\hr'$. Since $Z\subseteq\h'$ is a codimension 1 complex subspace, $\pi_1(B\cap\hr')\cong\Z$. Let $\bar B\twoheadrightarrow B\cap\hr'$ 
denote the universal cover, and let $\cO_B^{an}$ and $\cO_{\bar B}^{an}$ denote the rings of analytic functions on $B$ and $\bar B$ respectively. Now $\cO_{\bar B}^{an}\otimes_{\cO(\hr')}M$ is naturally an analytic $\cO$-coherent 
$\D$-module on $\bar B$. Since $\bar B$ is simply connected, we have
\[
	\cO_{\bar B}^{an}\otimes_\C M_{flat}\isom\cO_{\bar B}^{an}\otimes_{\cO(\hr')}M,
\]
where $M_{flat}\subseteq\cO_{\bar B}^{an}\otimes_{\cO(\hr')}M$ is the space of flat sections. Let $i:M\rightarrow\cO_{\bar B}^{an}\otimes_{\cO(\hr')}M$ denote the inclusion. Again we consider three cases, depending on which 
two coordinates are equal on $Z$.

\textbf{Case 1:} $W'\cong S_m^q\times S_2$, so $Z$ is the kernel of $x_Z=t_a-t_b\in(\h')^*$. Let $s\in\Sqp$ be the transposition switching $t_a$ with $t_b$, and let $\cO_B^{an,s}$ denote the subspace of $\cO_B^{an}$ fixed by $s$. 
We may think of $\cO_B^{an,s}$ as consisting of functions involving only even powers of $x_Z$. Let $\lambda=\frac2m+1$, and let
\[
	D_ZM=i^{-1}\left(\cO_B^{an,s}M_{flat}+x_Z^\lambda\cO_B^{an,s}M_{flat}\right),
\]
noting that $x_Z^\lambda$ is a well-defined function in $\cO_{\bar B}^{an}$. We first check that this is independent of the choice of $b$ and $B$. Suppose $b'$ and $B'$ are chosen to satisfy the same conditions. Since 
$\h^{W'}_{\rm reg}$ is connected, we may find a path joining $b$ to $b'$, and some tubular neighbourhood $T$ of this path in $\h'$ will contain $B$ and $B'$, such that the inclusions $B\cap\hr'\hookrightarrow T\cap\hr'$ 
and $B'\cap\hr'\hookrightarrow T\cap\hr'$ are homotopy equivalences. This allows us to identify $\bar B$ and $\bar B'$ with open subsets of the universal cover $\bar T$ of $T\cap\hr'$. Thus $i$ may be expressed as a composite
\[
	M\stackrel{j}{\rightarrow}\cO_{\bar T}^{an}\otimes_{\cO(\hr')}M\stackrel{k}{\rightarrow}\cO_{\bar B}^{an}\otimes_{\cO(\hr')}M.
\]
Now $\cO_T^{an,s}+x_Z^\lambda\cO_T^{an,s}$ is the inverse image of $\cO_B^{an,s}+x_Z^\lambda\cO_B^{an,s}$ under $\cO_{\bar T}^{an}\rightarrow\cO_{\bar B}^{an}$, so
\[
	\cO_T^{an,s}M_{flat}+x_Z^\lambda\cO_T^{an,s}M_{flat}=k^{-1}(\cO_B^{an,s}M_{flat}+x_Z^\lambda\cO_B^{an,s}M_{flat}).
\]
It follows that $D_ZM=j^{-1}(\cO_T^{an,s}M_{flat}+x_Z^\lambda\cO_T^{an,s}M_{flat})$, so using $b'$ and $B'$ instead of $b$ and $B$ produces the same subspace. Property (1) is clear.

Now $\pi:\C[\h]\rightarrow\C[\h']$ is equivariant with respect to the action of $\Sqp$, so
\[
	\pi(\C[\h]^{S_n})\subseteq\C[\h']^{\Sqp}\subseteq\cO_B^{an,s}.
\]
Therefore $D_ZM$ is preserved by the action of $\pi(\C[\h]^{S_n})$. To show it is preserved by $\zeta$, note that
\begin{eqnarray*}
	\zeta x_Z^\lambda
		&=&2\lambda(\lambda-1)x_Z^{\lambda-2}-\frac4m\lambda x_Z^{\lambda-2}
			-2\sum_i\left(\frac{1}{t_a-z_i}-\frac{1}{t_b-z_i}\right)\lambda x_Z^{\lambda-1}\\
		&&{}-\frac2m\sum_{c\neq a,b}\left(\frac{1}{t_a-t_c}-\frac{1}{t_b-t_c}\right)\lambda x_Z^{\lambda-1}\\
		&=&2\sum_i\frac{1}{(t_a-z_i)(t_b-z_i)}\lambda x_Z^\lambda+\frac2m\sum_{c\neq a,b}\frac{1}{(t_a-t_c)(t_b-t_c)}\lambda x_Z^\lambda\\
		&\in&x_Z^\lambda\cO_B^{an,s}.
\end{eqnarray*}
Also setting $\ch x_Z=\ch t_a-\ch t_b$, we may write
\[
	\zeta\in\frac12(\ch x_Z)^2-\frac2m\frac{\ch x_Z}{x_Z}+\sum_{y\in Z}\C y^2+\cO_B^{an}y,
\]
so for $f\in\cO_B^{an,s}$, we have
\[
	[\zeta,f]\in\ch x_Z(f)\ch x_Z-\frac2m\frac{\ch x_Z(f)}{x_Z}+\sum_{y\in Z}\C y(f)y+\cO_B^{an}y(f)+\cO_B^{an,s}.
\]
But $\ch x_Z(f)/x_Z\in\cO_B^{an,s}$ and $s$ fixes $\zeta$, so we conclude that
\[
	[\zeta,\cO_B^{an,s}]\subseteq\cO_B^{an,s}x_Z\ch x_Z+\cO_B^{an,s}Z+\cO_B^{an,s}.
\]
Hence
\begin{eqnarray*}
	\zeta(\cO_B^{an,s}M_{flat}+x_Z^\lambda\cO_B^{an,s}M_{flat})\hspace{-30mm}\\
		&\subseteq&[\zeta,\cO_B^{an,s}]M_{flat}+[\zeta,\cO_B^{an,s}]x_Z^\lambda M_{flat}+\cO_B^{an,s}\zeta(x_Z^\lambda)M_{flat}\\
		&\subseteq&\cO_B^{an,s}M_{flat}+x_Z^\lambda\cO_B^{an,s}M_{flat},
\end{eqnarray*}
proving property (2).

Now suppose that $U$ and $V$ are as in property (3). If $U\cap Z=\emptyset$ then $\cO(U\cap\hr')=\cO(U')$, and the property is clear. Otherwise we may suppose that $B\subseteq U$. Then
\[
	\cO_{\bar B}^{an}\otimes_\C M_{flat}=\cO_{\bar B}^{an}\otimes_{\cO(\hr')}M=\cO_{\bar B}^{an}\otimes_\C V,
\]
so $M_{flat}=XV$, where $X\in GL_{\cO_{\bar B}^{an}}(\cO_{\bar B}^{an}\otimes_\C V)$ satisfies
\begin{eqnarray*}
	X&\in&\sum_{\mu\in\Lambda,\atop l\in L}\End_\C(V)\otimes_\C\cO_B^{an}x_Z^\mu(\log x_Z)^l,\\
	X^{-1}&\in&\sum_{\mu\in\Lambda,\atop l\in L}\End_\C(V)\otimes_\C\cO_B^{an}x_Z^{-\mu}(\log x_Z)^l
\end{eqnarray*}
for some finite subsets $\Lambda\subseteq\C$ and $L\subseteq\Z_{\geq0}$. Thus
\begin{eqnarray*}
	M_{flat}&\subseteq&\sum_{\mu\in\Lambda,\atop l\in L}\cO_B^{an}x_Z^\mu(\log x_Z)^lV,\\
	V&\subseteq&\sum_{\mu\in\Lambda,\atop l\in L}\cO_B^{an}x_Z^{-\mu}(\log x_Z)^lM_{flat}.
\end{eqnarray*}
Thus
\begin{eqnarray*}
	D_ZM&\subseteq&(\cO(U\cap\hr')\otimes_\C V)\cap\left(\sum_{\mu\in\Lambda,\atop l\in L}(\cO_B^{an}x_Z^\mu(\log x_Z)^l+\cO_B^{an}x_Z^{\mu+\lambda}(\log x_Z)^l)V\right)\\
		&\subseteq&x_Z^{-K}\cO(U')V
\end{eqnarray*}
for some integer $K$, proving (3).

Proceeding with the above notation, suppose now that $M\in\phi^{-1}\mathcal A$. Let $S$ be the endomorphism of $\cO_{\bar B}^{an}\otimes_{\cO(\hr')}M$ given by
\[
	S(f\otimes v)=(\bar s^*f)\otimes sv,
\]
where $\bar s^*\in\End_\C(\cO_{\bar B}^{an})$ is induced by the automorphism $\bar s:\bar B\rightarrow\bar B$ obtained by lifting $s:B\rightarrow B$. That is, $\bar s^*$ sends $x_Z$ to 
$e^{-\pi i}x_Z$, and fixes any function killed by $\ch x_Z$. The restriction of $S$ to $M_{flat}$ is exactly the monodromy of the corresponding local system on $\hr'/\Sqp$ around $Z$, 
which is assumed to satisfy
\[
	(S|_{M_{flat}}-1)(S|_{M_{flat}}+\bq)=0.
\]
Thus $M_{flat}$ decomposes into eigenspaces $M_{flat}^1$ and $M_{flat}^{-\bq}$ for $S$. The above shows that
\begin{eqnarray*}
	M&\subseteq&\sum_{\mu\in\Lambda,\atop l\in L}\cO_B^{an}x_Z^{-\mu}(\log x_Z)^lM_{flat}\\
		&=&\sum_{\mu\in\Lambda,\atop l\in L}\cO_B^{an}x_Z^{-\mu}(\log x_Z)^lM_{flat}^1+\cO_B^{an}x_Z^{-\mu}(\log x_Z)^lM_{flat}^{-\bq}.
\end{eqnarray*}
But $-\bq=e^{\lambda\pi i}$, so the elements fixed by $s$ must be contained in
\[
	M^s\subseteq\cO_B^{an,s}[x_Z^{-2}]M_{flat}^1+\cO_B^{an,s}[x_Z^{-2}]x_Z^\lambda M_{flat}^{-\bq}.
\]
If $v\in M^{\Sqp}\subseteq M^s$, then for some $K>0$ we have
\[
	\alpha^Kv\in M\cap\left(\cO_B^{an,s}M_{flat}^1+\cO_B^{an,s}x_Z^\lambda M_{flat}^{-\bq}\right)
		\subseteq D_ZM.
\]
Since $\alpha$ is also $\Sqp$-invariant, we conclude that $M^{\Sqp}\subseteq\C[\alpha^{-1}](D_ZM)^{\Sqp}$, proving (4).

\textbf{Case 2:} $W'\cong S_m^{q-2}\times S_{2m}$, so $Z$ is the kernel of $x_Z=z_i-z_j\in(\h')^*$. Let $s\in\Sqp$ be the transposition switching $z_i$ with $z_j$, and again let $\cO_B^{an,s}$ denote the 
subspace of $\cO_B^{an}$ fixed by $s$. We now set $\lambda=2m+1$, and again define
\[
	D_ZM=i^{-1}\left(\cO_B^{an,s}M_{flat}+x_Z^\lambda\cO_B^{an,s}M_{flat}\right).
\]
The arguments proceed as in the previous case, the only difference being the following two calculations. We have
\begin{eqnarray*}
	\zeta x_Z^\lambda
		&=&\frac{2\lambda(\lambda-1)}{m}x_Z^{\lambda-2}-4\lambda x_Z^{\lambda-2}
			-2\sum_{k\neq i,j}\left(\frac{1}{z_i-z_k}-\frac{1}{z_j-z_k}\right)\lambda x_Z^{\lambda-1}\\
		&&{}-\frac2m\sum_a\left(\frac{1}{z_i-t_a}-\frac{1}{z_j-t_a}\right)\lambda x_Z^{\lambda-1}\\
		&=&2\sum_{k\neq i,j}\frac{1}{(z_i-z_k)(z_j-z_k)}\lambda x_Z^\lambda
			+\frac2m\sum_a\frac{1}{(z_i-t_a)(z_j-t_a)}\lambda x_Z^\lambda\\
		&\in&x_Z^\lambda\cO_B^{an,s}.
\end{eqnarray*}
Also, setting $\ch x_Z=\ch z_i-\ch z_j$, we have
\[
	\zeta\in\frac1{2m}(\ch x_Z)^2-2\frac{\ch x_Z}{x_Z}+\sum_{y\in Z}\C y^2+\cO_B^{an}y,
\]
so for $f\in\cO_B^{an,s}$, we have
\[
	[\zeta,f]\in\frac1m\ch x_Z(f)\ch x_Z-2\frac{\ch x_Z(f)}{x_Z}+\sum_{y\in Z}\C y(f)y+\cO_B^{an}y(f)+\cO_B^{an,s}.
\]
The properties follow as above.

\textbf{Case 3:} $W'\cong S_m^{q-1}\times S_{m+1}$, so $Z$ is the kernel of $x_Z=z_i-t_a$. Let $\ch x_Z=\ch z_i-m\ch t_a$, and define
\begin{eqnarray*}
	\cO_B^{an+}&=&\{f\in\cO_B^{an}\mid\ch x_Z(f)\in x_Z\cO_B^{an}\},\\
	D_ZM&=&i^{-1}(\cO_B^{an+}M_{flat}).
\end{eqnarray*}
Properties (1), (3) and (4) follow as in the previous two cases. Now suppose $f\in\C[\h]^{S_n}$, and pick $j$ and $k$ such that $\pi(x_j)=z_i$ and $\pi(x_k)=t_a$. Then $(\ch x_j-\ch x_k)f$ is 
antisymmetric under $s_{jk}$, so $(\ch x_j-\ch x_k)f\in(x_j-x_k)\C[\h]$. Using equation (\ref{partial}) from the proof of Proposition \ref{imsigma}, applying $\pi$ gives
\[
	\frac1m\ch x_Z\pi(f)\in x_Z\C[\h'].
\]
Therefore $\pi(\C[\h]^{S_n})\subseteq\cO_B^{an+}$, so $D_ZM$ is preserved by the action of $\pi(\C[\h]^{S_n})$. Finally note that, for $j\neq i$ we have
\begin{eqnarray*}
	\frac1{z_j-z_i}-\frac1{z_j-t_a}&=&\frac{x_Z}{(z_j-z_i)(z_j-t_a)}\in x_Z\cO_B^{an}\text{ and}\\
	\ch x_Z\left(\frac m{z_j-z_i}+\frac1{z_j-t_a}\right)
		&=&\frac m{(z_j-z_i)^2}-\frac m{(z_j-t_a)^2}\\
		&=&\frac{mx_Z(2z_j-z_i-t_a)}{(z_j-z_i)^2(z_j-t_a)^2}\in x_Z\cO_B^{an},\text{ whence}\\
	\frac m{z_j-z_i}+\frac1{z_j-t_a}&\in&\cO_B^{an+}.
\end{eqnarray*}
Thus
\begin{eqnarray*}
	-2\frac{\ch z_i-\ch z_j}{z_i-z_j}-\frac2m\frac{\ch z_j-m\ch t_a}{z_j-t_a}\hspace{-20mm}\\
		&=&\frac2{m+1}\left(\frac1{z_j-z_i}-\frac1{z_j-t_a}\right)(\ch z_i-m\ch t_a)\\
		&&{}+\frac2{m+1}\left(\frac m{z_j-z_i}+\frac1{z_j-t_a}\right)\left(\ch z_i+\ch t_a-(1+1/m)\ch z_j\right)\\
		&\in&x_Z\cO_B^{an}\ch x_Z+\cO_B^{an+}Z.
\end{eqnarray*}
Similarly for $b\neq a$, we have
\begin{eqnarray*}
	-\frac2m\frac{\ch z_i-m\ch t_b}{z_i-t_b}-\frac2m\frac{\ch t_a-\ch t_b}{t_a-t_b}\hspace{-20mm}\\
		&=&\frac2{m(m+1)}\left(\frac1{t_b-z_i}+\frac1{t_a-t_b}\right)(\ch z_i-m\ch t_a)\\
		&&{}+\frac2{m(m+1)}\left(\frac m{t_b-z_i}-\frac1{t_a-t_b}\right)\left(\ch z_i+\ch t_a-(m+1)\ch t_b\right)\\
		&\in&x_Z\cO_B^{an}\ch x_Z+\cO_B^{an+}Z.
\end{eqnarray*}
Finally
\[
	\frac1m(\ch z_i)^2+(\ch t_a)^2
		=\frac1{m(m+1)}(\ch x_Z)^2+\frac1{m+1}(\ch z_i+\ch t_a)^2,
\]
so
\[
	\zeta\in\frac1{m(m+1)}(\ch x_Z)^2-\frac2m\frac{\ch x_Z}{x_Z}+x_Z\cO_B^{an}\ch x_Z+\sum_{y\in Z}\C y^2+\cO_B^{an+}y.
\]
Now suppose $f\in\cO_B^{an+}$, so $\ch x_Z(f)/x_Z\in\cO_B^{an}$. We have $[\ch x_Z,x_Z]=(m+1)$, so
\begin{eqnarray*}
	\ch x_Z\zeta(f)
		&\in&\frac1{m(m+1)}\left((\ch x_Z)^2x_Z-2(m+1)\ch x_Z\right)\frac{\ch x_Z(f)}{x_Z}\\
		&&{}+\ch x_Zx_Z\cO_B^{an}\ch x_Z(f)+\sum_{y\in Z}\C\ch x_Zy^2(f)+\ch x_Z\cO_B^{an+}y(f)\\
		&\subseteq&\frac1{m(m+1)}x_Z(\ch x_Z)^2\left(\frac{\ch x_Z(f)}{x_Z}\right)\\
		&&{}+\ch x_Z\left(x_Z^2\cO_B^{an}\right)+\sum_{y\in Z}\C y^2\ch x_Z(f)+x_Z\cO_B^{an}y(f)+\cO_B^{an+}y\ch x_Z(f)\\
		&\subseteq&x_Z\cO_B^{an}+\sum_{y\in Z}\C y^2\left(x_Z\cO_B^{an}\right)+\cO_B^{an+}y\left(x_Z\cO_B^{an}\right)\\
		&\subseteq&x_Z\cO_B^{an}.
\end{eqnarray*}
Thus $\zeta(f)\in\cO_B^{an+}$, proving property (2).
\end{proof}
\begin{lemm}
Consider the intersection
\[
	DM=\bigcap_ZD_ZM
\]
over all components of $\h'\setminus\hr'$. This subspace has the following properties:
\begin{enumerate}
\item $DM$ is functorial in $M$.
\item $DM$ is preserved by the actions of $\zeta$ and $\pi(\C[\h]^{S_n})\subseteq\C[\h']$.
\item $DM$ is finitely generated over $\pi(\C[\h]^{S_n})$.
\item If $M\in\phi^{-1}\mathcal A$ then $M^{\Sqp}=\C[\alpha^{-1}](DM)^{\Sqp}$.
\end{enumerate}
\end{lemm}
\begin{proof}
Certainly (1), (2) and (4) follow immediately from the corresponding properties of $D_ZM$. To prove (3), let $\{U_i\}$ denote a Zariski open cover of $\hr'$ such that
\[
	\cO(U_i)\otimes_{\cO(\hr')}M\cong\cO(U_i)\otimes_\C V_i.
\]
We may suppose $U_i$ consists of points in $\hr'$ where $g_i\neq0$, for some $g_i\in\C[\h']$. Moreover we may suppose $g_i$ does not vanish on any component of $\h'\setminus\hr'$. Now 
$\C[\h']$ is a UFD, so we have the notion of the order of pole of any element of $\cO(U_i)=\C[\h'][\alpha^{-1},g_i^{-1}]$ along some component $Z$. Property (3) of $D_ZM$ states that the coefficients of any 
$v\in D_ZM$ relative to $V_i$ have poles along $Z$ of order at most $K$, for some integer $K$. Thus
\[
	DM\subseteq\cO(U_i')\alpha^{-K}V_i,
\]
where $U_i'=\Spec\C[\h'][g_i^{-1}]\subseteq\h'$. In particular, $\cO(U_i')DM$ is finitely generated over $\cO(U_i')$, so $\cO(U')DM$ is a coherent sheaf on $U'$, where $U'$ is the union of the $U_i'$. We have 
$\cO(U')DM\subseteq M$, and the latter is locally free, so $\cO(U')DM$ is torsion free. Also $\h'\setminus U'$ is contained in $\h'\setminus\hr'$, but doesn't contain any component of the latter, so it has 
codimension at least $2$. Therefore $\cO(U')=\C[\h']$, so Lemma \ref{cohnonaff} implies that $\C[\h']DM$ is finitely generated over $\C[\h']$. Finally $\C[\h']$ is finite over $\pi(\C[\h]^{S_n})$, and the 
latter ring is Noetherian, so (3) follows.
\end{proof}
We may now complete the proof of Proposition \ref{LoqqA}. Consider the subspaces
\begin{eqnarray*}
	EM&=&\left\{\left.\sum_{w\in S_n}w\otimes v\right|v\in(DM)^{\Sqp}\right\}\subseteq e\bar GM,\\
	GM&=&H_cEM,
\end{eqnarray*}
where, as usual, $e=\frac1{n!}\sum_{w\in S_n}w$. Property (2) of $DM$ implies that $EM$ is preserved by the actions of $p_{\ch x}(2)e\in H_c$ and $\C[\h]^{S_n}e\subseteq H_c$. These generate the subalgebra $eH_ce\subseteq H_c$, so 
$EM$ is an $eH_ce$-submodule of $e\bar GM$. Also $EM\cong(DM)^{\Sqp}$ as $\C[\h]^{S_n}$-modules, so $EM$ is finitely generated over $\C[\h]^{S_n}$. Since $\eu$ acts locally finitely on $\bar GM$, we conclude that $EM$ decomposes 
into finite dimensional generalised eigenspaces for $\eu$, with eigenvalues in $\Lambda+\Z_{\geq0}$ for some finite subset $\Lambda\subset\C$. The same is true of $GM$, since $H_c$ is finite over $eH_ce$ and $\ad\,\eu$ is locally finite 
on $H_c$. This ensures that $GM\in H_c\rs^q$. Moreover the composite
\[
	\eta:Loc^qGM=\cO(\hr')\otimes_{\C[\h]}GM\hookrightarrow\cO(\hr')\otimes_{\C[\h]}\bar GM\isom M
\]
is a homomorphism of $\C[\Sqp]\ltimes D(\hr')$-modules, by Proposition \ref{flatconn}. But $GM\supseteq EM$, so if $M\in\phi^{-1}\mathcal A$, then property (4) of $DM$ ensures that $\im(\eta)$ contains $M^{\Sqp}$. Theorem 
2.3 of \cite{Montgomery} shows that
\[
	e'=\frac1{q!p!}\sum_{w\in\Sqp}w
\]
generates $\C[\Sqp]\ltimes D(\hr')$ as a two-sided ideal, so $M^{\Sqp}=e'M$ generates $M$ over $\C[\Sqp]\ltimes D(\hr')$. Therefore $\eta$ is an isomorphism, proving the result.
\end{proof}
Although we have considered $c=\frac1m$ so far in this section, the following result will allow us to generalise to $c=\frac rm$. The first statement follows from Corollary 4.3 of \cite{BE} in the case $m=2$, and from 
Theorem 5.12 and Proposition 5.14 of \cite{Rouquier} when $m>2$. The second is Theorem 5.10 of \cite{Rouquier}.
\begin{thm}
Suppose $r>0$ is coprime with $m$. There is an equivalence of categories $H_{\frac rm}\rs\cong H_{\frac1m}\rs$ which identifies $H_{\frac rm}\rs^q$ with $H_{\frac1m}\rs^q$ and sends $L(\tau_\lambda)\in H_{\frac rm}\rs$ to 
$L(\tau_\lambda)\in H_{\frac1m}\rs$. There is a $\C$-algebra isomorphism $H_{e^{2\pi ir/m}}(S_p)\cong H_{e^{2\pi i/m}}(S_p)$ which sends $D_\nu\in H_{e^{2\pi ir/m}}(S_p){\rm-mod}_{fd}$ to 
$D_\nu\in H_{e^{2\pi i/m}}(S_p){\rm-mod}_{fd}$.
\end{thm}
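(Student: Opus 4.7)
The plan is to derive both statements from the KZ functor of \cite{GGOR} combined with Rouquier's theory of highest weight covers, with Galois action on primitive $m$-th roots of unity providing the link between the two Hecke parameters $\bq=e^{2\pi i/m}$ and $\bq^r=e^{2\pi ir/m}$.

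First I would establish the Hecke algebra isomorphism. The algebra $H_q(S_p)$ has a presentation by generators $T_1,\ldots,T_{p-1}$ whose defining relations have structure constants in $\mathbb{Z}[q,q^{-1}]$; similarly the Dipper--James cellular basis and the labeling of simples by $m$-regular partitions are defined integrally. Since $\gcd(r,m)=1$, both $\bq$ and $\bq^r$ are primitive $m$-th roots of unity, so they generate the same cyclotomic subfield of $\C$, and any field automorphism of $\C$ restricting to the Galois automorphism $\zeta_m\mapsto\zeta_m^r$ on $\mathbb{Q}(\zeta_m)$ induces a $\C$-algebra isomorphism $\Phi:H_{\bq^r}(S_p)\isom H_\bq(S_p)$ sending $T_i\mapsto T_i$ and $D_\nu\mapsto D_\nu$.

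Next I would deduce the categorical equivalence. Apply the KZ functor $KZ_c:H_c\rs\rightarrow H_{e^{2\pi ic}}(S_n){\rm-mod}$ of \cite{GGOR}, which is an exact quotient functor. By Theorem 5.12 of \cite{Rouquier}, $H_c\rs$ is a \emph{highest weight cover} of its Hecke image, and such covers are uniquely determined (up to equivalence respecting standard objects) by the Hecke algebra together with its cellular ordering. Composing a quasi-inverse of $KZ_{1/m}$ with $\Phi_*\circ KZ_{r/m}$ and invoking Rouquier's uniqueness statement produces the equivalence $F:H_{r/m}\rs\isom H_{1/m}\rs$; Proposition 5.14 of \cite{Rouquier} then identifies the effect on standard objects, yielding $M(\tau_\lambda)\mapsto M(\tau_\lambda)$ and hence $L(\tau_\lambda)\mapsto L(\tau_\lambda)$. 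The case $m=2$, where $H_\bq(S_n)\cong\C[S_n]$ is a group algebra, is handled directly by Corollary 4.3 of \cite{BE}. Preservation of the support filtration is then automatic: by Theorem \ref{suppSnpartle} the support of $L(\tau_\lambda)$ depends only on $\lambda$ and on the sign of $c$, so matching simples forces $F$ to identify the Serre subcategories $H_{r/m}\rs^q$ and $H_{1/m}\rs^q$.

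The main obstacle is verifying the hypotheses of Rouquier's highest weight cover uniqueness theorem in our type $A$ setting: the double centralizer property for a projective generator of $H_c\rs$, the compatibility of the cellular order on $H_\bq(S_n)$ with the KZ order on simples of $H_c\rs$, and the faithful-on-standards property of $KZ_c$. These are precisely what the cited results of \cite{Rouquier} supply; once granted, the rest of the argument reduces to tracking the Galois automorphism and the cellular bases under the functors involved.
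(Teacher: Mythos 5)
Your framework (KZ functor, uniqueness of highest weight covers) matches what the paper cites from \cite{Rouquier} and \cite{BE}, so you are in the right neighborhood. But the Galois-twist step is a genuine gap: a field automorphism $\sigma$ of $\C$ with $\sigma(\bq^r)=\bq$, applied to coefficients, produces only a $\sigma$-semilinear ring isomorphism $H_{\bq^r}(S_p)\to H_\bq(S_p)$, satisfying $\Phi(cx)=\sigma(c)\Phi(x)$ rather than $\C$-linearity; and the obvious $\C$-linear alternative $T_i\mapsto T_i$ is not even well defined, since in $H_\bq(S_p)$ one has $(T_i-1)(T_i+\bq^r)=(\bq^r-\bq)(T_i-1)\neq0$. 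A finite-dimensional $\C$-algebra is in general not $\C$-isomorphic to its Galois twist unless it is (abstractly) defined over the fixed field, and $H_\bq(S_p)$ at a root of unity is not visibly defined over $\Q$. The assertion that $H_{\bq^r}(S_p)\cong H_\bq(S_p)$ as $\C$-algebras, compatibly with the parameterization of simples by $m$-regular partitions, is precisely Theorem 5.10 of \cite{Rouquier} and is a nontrivial result; Rouquier obtains it through the highest-weight-cover formalism, so your sketch has the logical order reversed (you use the Hecke isomorphism as input to the cover argument, whereas it is really an output).

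Two secondary remarks. First, ``a quasi-inverse of $KZ_{1/m}$'' is not literal, since the KZ functor is a quotient functor rather than an equivalence; the unicity-of-covers theorem supplies the equivalence directly, as you go on to invoke. Second, Theorem \ref{suppSnpartle} gives only the containment $\supp L(\tau_\lambda)\subseteq X_{q_m(\lambda)}$, not equality, so it does not by itself show that the support of $L(\tau_\lambda)$ is independent of the choice $c\in\{1/m,r/m\}$; and appealing to the full Theorem \ref{suppSnpart} here would be circular, since that result is proved afterwards using the present theorem. The support preservation should instead be deduced from the observation that an equivalence matching standards and simples preserves the class $[L(\tau_\lambda)]$ in the Verma basis of the Grothendieck group, and each $M(\tau_\mu)$ has $\C[\h]$-module structure $\C[\h]\otimes\tau_\mu$ independent of $c$, so the support of $L(\tau_\lambda)$ is determined by data the equivalence preserves.
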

We may now combine the above to prove our remaining main results.
\begin{proof}[Proof of Theorems \ref{rsqrsq+1} and \ref{suppSnpart}]
As in the proof of Theorem \ref{suppSnpartle}, it suffices to assume $c>0$. By the previous theorem, we may suppose $c=\frac1m$. We have seen that $\phi\,Loc^q$ and $\psi^*$ identify $H_c\rs^q/H_c\rs^{q+1}$ and 
$\C[S_q]\otimes_\C H_\bq(S_p){\rm-mod}_{\rm fd}$, respectively, with the full subcategory $\mathcal A\subseteq\pi_1(\hr'/\Sqp){\rm-mod}_{fd}$. This proves the first statement of Theorem \ref{rsqrsq+1}. Let
\[
	F_{n,q}:H_{\frac1m}(S_n,\C^n)\rs^q/H_{\frac1m}(S_n,\C^n)\rs^{q+1}\isom\C[S_q]\otimes_\C H_\bq(S_p){\rm-mod}_{\rm fd}
\]
denote the equivalence.

Next we prove by induction on $q$ that if $q_m(\lambda)=q$, then $\supp(L(\tau_\lambda))=X_q$. Suppose it holds for $q'<q$. If the support of $L(\tau_\lambda)$ is $X_q$, then $q_m(\lambda)\leq q$ by Theorem 
\ref{suppSnpartle}. However we cannot have $q_m(\lambda)<q$ by the inductive hypothesis. Therefore the irreducibles in $H_c\rs^q/H_c\rs^{q+1}$ are a subset of
\[
	\Omega=\{L(\tau_\lambda)\mid q_m(\lambda)=q\}.
\]
We have a bijection
\[
	\{\mu\vdash q\}\times\{\nu\vdash p\mid q_m(\nu)=0\}\rightarrow\{\lambda\vdash n\mid q_m(\lambda)=q\}
\]
given by $(\mu,\nu)\mapsto m\mu+\nu$. The irreducibles in $\C[S_q]{\rm-mod}_{\rm fd}$ are indexed by $\{\mu\vdash q\}$, and those in $H_\bq(S_p){\rm-mod}_{\rm fd}$ by $\{\nu\vdash p\mid q_m(\nu)=0\}$. Therefore 
$|\Omega|$ is exactly the number of irreducibles in $\C[S_q]\otimes_\C H_\bq(S_p){\rm-mod}_{\rm fd}$. Since the latter category is equivalent to $H_c\rs^q/H_c\rs^{q+1}$, it follows that each module in $\Omega$ must 
be an irreducible in $H_c\rs^q/H_c\rs^{q+1}$; that is, they must all be supported on $X_q$. This completes the induction.

Finally we must show that $F_{n,q}(L(\tau_{m\mu+\nu}))\cong\tau_\mu\otimes D_{\nu'}$, where $\mu\vdash q$ and $\nu\vdash p$ with $q_m(\nu)=0$. This is known when $q=0$ (see Corollary 4.7 of \cite{BE}).

Next suppose that $p=0$, so $n=qm$. We will prove the statement in this case by induction on $q$. Now $M(\tau_{(n)})$ is the polynomial representation, and $L(\tau_{(n)})=\cO(X_q)$. This induces the trivial local 
system on $\hr^{S_m^q}$, so $F_{n,q}(L(\tau_{(n)}))=\tau_{(q)}$. This proves the statement for $q\leq2$. Suppose $q>2$ and that the statement holds for $q-1$. Let $b\in\h$ be a point whose stabiliser in $S_n$ is 
$W'=S_{n-m}\times S_m$. Note that any minimally supported module $M$ in $H_c(W',\C^n)\rs$ is of the form $M\cong M'\otimes L(\tau_{(m)})$ for some $M'\in H_c(S_{n-m},\C^{n-m})\rs^{q-1}$; by abuse of notation, we'll write 
$F_{n-m,q-1}(M)$ to mean $F_{n-m,q-1}(M')$. By (4) and (5) of Theorem \ref{resind}, there is a filtration of $Res_bM(\tau_{m\mu})$ whose successive quotients are the Verma modules corresponding to the composition factors of 
$Res^{S_n}_{W'}\tau_{m\mu}$. We have a surjection
\[
	Res_bM(\tau_{m\mu})\twoheadrightarrow Res_bL(\tau_{m\mu}).
\]
By (2) of Theorem \ref{resind}, the latter module has minimal support, so it is semisimple by Theorem \ref{minsupp}. For $\alpha\vdash n-m$ and $\beta\vdash m$, the only irreducible quotient of $M(\tau_\alpha\otimes\tau_\beta)$ 
is $L(\tau_\alpha\otimes\tau_\beta)$, and this has minimal support only if $\beta=(m)$ and $\alpha=m\gamma$ for some $\gamma\vdash q-1$. Moreover by the Littlewood-Richardson rule, $\tau_{m\gamma}\otimes\tau_{(m)}$ is a 
composition factor of $Res^{S_n}_{W'}\tau_{m\mu}$ only if $\tau_\gamma$ is a composition factor of $Res^{S_q}_{S_{q-1}}\tau_\mu$, and in this case it has multiplicity one. Therefore $Res_bL(\tau_{m\mu})$ is a submodule of
\def\clap#1{\hbox to 0pt{\hss#1\hss}}
\def\mathclap{\mathpalette\mathclapinternal}
\def\mathclapinternal#1#2{\clap{$\mathsurround=0pt#1{#2}$}}
\[
	\bigoplus_{\mathclap{\gamma\vdash q-1\atop\Hom_{S_{q-1}}(\tau_\gamma,\tau_\mu)\neq0}}L(\tau_{m\gamma}\otimes\tau_{(m)}).
\]
By the proofs of Lemmas \ref{locqexact} and \ref{locqmon}, $L(\tau_{m\mu})$ is scheme-theoretically supported on $X_q$, and the induced local system on $\hr^{W''}$ has trivial monodromy 
around $Z(\alpha_s)$ for each $s\in S\setminus W'$ and each $W''\subseteq W'$ with $W''\cong S_m^q$. Therefore Proposition \ref{resmonglobal} implies
\begin{eqnarray*}
	Res^{S_q}_{S_{q-1}}F_{n,q}(L(\tau_{m\mu}))
		&\cong&F_{n-m,q-1}(Res_bL(\tau_{m\mu}))\\
		&\subseteq&\bigoplus_{\mathclap{\gamma\vdash q-1\atop\Hom_{S_{q-1}}(\tau_\gamma,\tau_\mu)\neq0}}F_{n-m,q-1}(L(\tau_{m\gamma}))\\
		&\cong&\bigoplus_{\mathclap{\gamma\vdash q-1\atop\Hom_{S_{q-1}}(\tau_\gamma,\tau_\mu)\neq0}}\tau_\gamma\hspace{10mm}\text{by the inductive hypothesis}\\
		&\cong&Res^{S_q}_{S_{q-1}}\tau_\mu.
\end{eqnarray*}
If $q>2$, then the irreducibles over $S_q$ have distinct restrictions to $S_{q-1}$. Therefore $F_{n,q}(L(\tau_{m\mu}))\cong\tau_\mu$ for each $\mu\vdash q$, as required.

Finally we deduce the general case from these two special cases by induction on $h(\tau_{m\mu+\nu})$. Suppose the statement holds for all $\lambda$ with $h(\tau_\lambda)>h(\tau_{m\mu+\nu})$. Let $b\in\h$ be a point whose 
stabiliser in $S_n$ is $W'=S_{qm}\times S_p$. By Lemma \ref{indSn}, we have a nonzero map $M(\tau_{m\mu+\nu})\rightarrow Ind_b(L(\tau_{m\mu})\otimes L(\tau_{\nu}))$. Let $N$ be the image of this map. Its composition factors 
are of the form $L(\tau_\lambda)$ where either $\lambda=m\mu+\nu$ or $h(\tau_\lambda)>h(\tau_{m\mu+\nu})$ and $q_m(\lambda)\geq q$. By (1) of Theorem \ref{resind}, there is a nonzero map
\[
	Res_bN\rightarrow L(\tau_{m\mu})\otimes L(\tau_{\nu}).
\]
Since the latter module is irreducible, it is a quotient of some $Res_bL(\tau_\lambda)$ where either $\lambda=m\mu+\nu$ or $h(\tau_\lambda)>h(\tau_{m\mu+\nu})$ and $q_m(\lambda)=q$. Again by Proposition \ref{resmonglobal}, we have
\[
	F_{n,q}(L(\tau_\lambda))\twoheadrightarrow F_{mq,q}(L(\tau_{m\mu}))\otimes F_{p,0}(L(\tau_{\nu}))
		\cong\tau_\mu\otimes D_{\nu'}
\]
by the $q=0$ and $p=0$ cases shown above. In fact the left hand side is irreducible since $F_{n,q}$ is an equivalence, so
\[
	F_{n,q}(L(\tau_\lambda))\cong\tau_\mu\otimes D_{\nu'}
\]
By the inductive hypothesis, we cannot have $h(\tau_\lambda)>h(\tau_{m\mu+\nu})$. Therefore $\lambda=m\mu+\nu$ and we are done.
\end{proof}
\begin{proof}[Proof of Corollary \ref{BOconj}]
Let us again denote by $\bp_n$ the number of partitions of $n$, and let $\bp_{n,m}=|\{\lambda\vdash n\mid q_m(\lambda)=0\}|$ denote the number of $m$-regular partitions of $n$. We have shown that the number of irreducibles in 
$H_{\frac rm}\rs(S_n,\C^n)$ whose support is $X_q$ is $\bp_q\bp_{n-qm,m}$. This is the coefficient of $s^nt^{qm}$ in the formal power series
\[
	N(s,t)=\sum_{p,q\geq0}\bp_q\bp_{p,m}s^{qm+p}t^{qm}.
\]
It is well known that
\[
	\sum_{n\geq0}\bp_nt^n=\prod_{n>0}\frac1{1-t^n}.
\]
Every partition $\lambda$ of $n$ can be written uniquely as $\lambda=m\mu+\nu$ where $\nu$ is $m$-regular. Therefore
\[
	\prod_{n>0}\frac1{1-t^n}=\sum_{n\geq0}\bp_nt^n
		=\sum_{p,q\geq0}\bp_q\bp_{p,m}t^{mq+p}
		=\left(\prod_{q>0}\frac1{1-t^{mq}}\right)\left(\sum_{p\geq0}\bp_{p,m}t^p\right),
\]
giving
\[
	\sum_{p\geq0}\bp_{p,m}t^p=\prod_{n>0\atop m\nmid n}\frac1{1-t^n}.
\]
Thus
\[
	N(s,t)=\left(\sum_{p\geq0}\bp_{p,m}s^p\right)\left(\sum_{q\geq0}\bp_q(st)^{qm}\right)
		=\left(\prod_{p>0\atop m\nmid p}\frac1{1-s^p}\right)\left(\prod_{q>0}\frac1{1-(st)^{qm}}\right).
\]
Now consider the operator
\[
	A_m=\sum_{i>0}\alpha_{-im}\alpha_{im}
\]
acting on Fock space $F$. The elements
\[
	\prod_{i>0}\alpha_{-i}^{\nu_i}+{\rm span}\{A\alpha_i\mid i>0\}\in F
\]
form a basis for $F$, where the $\nu_i$ are nonnegative integers with only finitely many nonzero. This element is an eigenvector for $A_m$ with eigenvalue
\[
	\sum_{i>0\atop m|i}i\nu_i.
\]
Therefore
\begin{eqnarray*}
	\tr_F(s^{A_1}t^{A_m})
		&=&\sum_\nu\left(\prod_{i>0}s^{i\nu_i}\right)\left(\prod_{i>0\atop m|i}t^{i\nu_i}\right)\\
		&=&\left(\prod_{i>0\atop m\nmid i}\sum_{\nu_i\geq0}s^{i\nu_i}\right)\left(\prod_{i>0\atop m|i}\sum_{\nu_i\geq0}(st)^{i\nu_i}\right)\\
		&=&\left(\prod_{i>0\atop m\nmid i}\frac1{1-s^i}\right)\left(\prod_{i>0}\frac1{1-(st)^{mi}}\right)\\
		&=&N(s,t),
\end{eqnarray*}
as required.
\end{proof}
\bibliographystyle{amsplain}
\providecommand{\bysame}{\leavevmode\hbox to3em{\hrulefill}\thinspace}
\providecommand{\MR}{\relax\ifhmode\unskip\space\fi MR }
\providecommand{\MRhref}[2]{%
  \href{http://www.ams.org/mathscinet-getitem?mr=#1}{#2}
}
\providecommand{\href}[2]{#2}

\end{document}